\newtheorem{theorem}{Theorem}
\newtheorem{lemma}[theorem]{Lemma}
\newtheorem{corollary}[theorem]{Corollary}
\theoremstyle{definition}
\newtheorem*{definition}{Definition}
\newtheorem{example}{Example}
\newtheorem*{remark}{Remark}
\newcommand{\onefig}{.55}                             
\newcommand{\twofig}{.45}                             
\newcommand{\Meas}[1][\R^d]{\mathcal M\left(#1\right)}
\newcommand{\D}{D}																		
\newcommand{\PP}{\mathsf P}														
\newcommand{\R}{\mathbb R}														
\newcommand{\e}{\mathrm e}														
\newcommand{\tr}{^\mathsf{T}}													
\newcommand{\Sph}[1][d]{\mathbb{S}^{#1-1}}						
\newcommand{\Haus}{\delta_H}													
\DeclareMathOperator{\intrOp}{int}									  
\DeclareMathOperator{\clOp}{cl}   									  
\DeclareMathOperator{\bdOp}{bd}												
\DeclareMathOperator{\relintOp}{relint}								
\DeclareMathOperator{\relbdOp}{relbd} 								
\DeclareMathOperator{\relclOp}{relcl}									
\newcommand{\medianOp}{D^*}														
\newcommand{\coverMOp}{C^*}									          
\DeclareMathOperator{\facesOp}{\mathcal{F}}             
\DeclareMathOperator{\affOp}{aff}	                    
\DeclareMathOperator{\convOp}{conv}			          		
\DeclareMathOperator{\vol}{vol}   									  
\DeclareMathOperator{\area}{\lambda}									
\newcommand{\intr}[1]{\intrOp\left(#1\right)}
\newcommand{\cl}[1]{\clOp\left(#1\right)}
\newcommand{\bd}[1]{\bdOp\left(#1\right)}
\newcommand{\relint}[1]{\relintOp\left(#1\right)}
\newcommand{\relcl}[1]{\relclOp\left(#1\right)}
\newcommand{\relbd}[1]{\relbdOp\left(#1\right)}
\newcommand{\faces}[1]{\facesOp\left(#1\right)}
\newcommand{\median}[1][\mu]{\medianOp\left(#1\right)}
\newcommand{\coverM}[1][\mu]{\coverMOp\left(#1\right)}
\newcommand{\Damu}{\D_{\alpha}(\mu)}                  
\newcommand{\Uamu}{U_{\alpha}(\mu)}										
\newcommand{\UaFBmu}{U_{\alpha}^{FB}(\mu)}						
\newcommand{\Ucamu}{U^\circ_{\alpha}(\mu)}            
\newcommand{\aff}[1]{\affOp\left(#1\right)}
\newcommand{\conv}[1]{\convOp\left(#1\right)}
\newcommand{\compl}{^\mathsf{c}}                      
\newcommand{\half}{\mathcal H}												
\title[Ray basis theorem]{Halfspace depth for general measures: The ray basis theorem and its consequences}
\author{Petra Laketa}
\author{Stanislav Nagy}
\email{\randomize{nagy@karlin.mff.cuni.cz}}
\address{
	Charles University,
	Faculty of Mathematics and Physics,
	Prague, Czech Republic
}
\date{\today}
\begin{document}

\begin{abstract}
The halfspace depth is a prominent tool of nonparametric multivariate analysis. The upper level sets of the depth, termed the trimmed regions of a measure, serve as a natural generalization of the quantiles and inter-quantile regions to higher-dimensional spaces. The smallest non-empty trimmed region, coined the half{\-}space median of a measure, generalizes the median. We focus on the (inverse) ray basis theorem for the halfspace depth, a crucial theoretical result that characterizes the halfspace median by a covering property. First, a novel elementary proof of that statement is provided, under minimal assumptions on the underlying measure. The proof applies not only to the median, but also to other trimmed regions. Motivated by the technical development of the amended ray basis theorem, we specify connections between the trimmed regions, floating bodies, and additional equi-affine convex sets related to the depth. As a consequence, minimal conditions for the strict monotonicity of the depth are obtained. 
Applications to the computation of the depth and robust estimation are outlined.
\end{abstract}

\maketitle

%
%

\section{Introduction: Characterization of the halfspace median}

The lack of natural ordering of the points in multidimensional spaces makes the nonparametric analysis of multivariate data challenging. The depth introduces a data-dependent ordering of the sample points, in the direction from the most central observations being those that attain high depth values, to the peripheral ones with low depth \cite{Liu_etal1999, Zuo_Serfling2000, Zuo_Serfling2000b, Chernozhukov_etal2017}. A definition of depth-based central regions of the data, which are the regions where the depth exceeds given thresholds, ensues naturally. The smallest non-empty depth region is often termed the depth median set of the data. The depth medians provide convenient robust location estimators, well studied in the literature.

We consider the seminal halfspace (or Tukey) depth, and the general setup of finite Borel (not necessarily probability) measures. Write $\R^d$ for the $d$-dimensional Euclidean space equipped with the inner product $\left\langle \cdot, \cdot \right\rangle$, and $\Sph$ for the unit sphere in $\R^d$. The set of all closed halfspaces in $\R^d$ is denoted by $\half$. Elements of $\half$ can be represented as 
	\[	H_{x,u} = \left\{ y \in \R^d \colon \left\langle x, u \right\rangle \leq \left\langle y, u \right\rangle \right\}	\]
for $x \in \R^d$ a point in the boundary hyperplane and $u\in\Sph$ the inner unit normal of $H_{x,u}$. We write $\half(x) = \left\{ H_{x,u} \colon u \in \Sph \right\}$ for all halfspaces whose boundary passes through $x \in \R^d$. The collection of all finite Borel measures on $\R^d$ is denoted by $\Meas$. The halfspace depth was first considered by \citet{Tukey1975} and later substantially elaborated on by \citet{Donoho_Gasko1992}. The \emph{halfspace depth} of a point $x\in \R^d$ with respect to (w.r.t.) a measure $\mu\in \Meas$ is defined as
	\begin{equation}	\label{halfspace depth}
	\D(x;\mu)=\inf_{H\in \half(x)}\mu(H) = \inf_{H \in \half, x \in H} \mu(H).
	\end{equation}
The two expressions for the depth are easily seen to be equivalent. Our principal interest lies in the concept of the \emph{depth-trimmed regions} (also called \emph{central regions}) of $\mu$, defined for $\alpha \in \R$ by $\Damu = \left\{ x \in \R^d \colon \D\left(x;\mu\right) \geq \alpha \right\}$. It is a simple observation that the regions $\Damu$ are always closed convex sets, non-increasing in $\alpha \in \R$ in the sense of set inclusion. For $\alpha \leq 0$ we have $\Damu = \R^d$; for all $\alpha$ large enough $\Damu = \emptyset$. The supremum of all $\alpha$ such that $\Damu \neq \emptyset$ is denoted by $\alpha^*(\mu)$. We call the set $D_{\alpha^*}(\mu)$ the \emph{(halfspace) median set} of $\mu$, and denote it by $\median$. The median set is always non-empty and compact. Elements of $\median$ are called \emph{(halfspace) medians} of $\mu$. Any halfspace $H \in \half(x)$ that satisfies $\D(x;\mu) = \mu(H)$ is called a \emph{minimizing halfspace} of $\mu$ at $x \in \R^d$.

The so-called ray basis theorem provides a convenient characterization of a halfspace depth median of a measure $\mu$ in terms of its minimizing halfspaces. In its simplest form, the theorem asserts that for measures $\mu \in \Meas$ with continuous and positive density, a point $x \in \R^d$ is a half{\-}space median of $\mu$ if and only if the collection of its minimizing halfspaces covers the whole space $\R^d$. That result was first proved by \citet[Claim on p.~1818]{Donoho_Gasko1992}, and later extended in \citet[Propositions~8 and~12]{Rousseeuw_Ruts1999}. For the special case of uniform measures on convex bodies, such a characterization of the deepest point relates to an early observation of \citet[p.~251]{Grunbaum1963} from convex geometry. For details and additional discussion about the ray basis theorem, its history and relevance in both statistics and geometry see \cite[Section~4.3.1]{Nagy_etal2019} and \cite{Patakova_etal2020}. 

Our initial goal is to revisit the ray basis theorem, and consolidate its statement by extending it to any depth region $\Damu$ of a general measure $\mu \in \Meas$, under minimal assumptions.\footnote{The extension of these results from Borel probability measures to finite Borel measures on $\R^d$ is minor. By our treatment of \emph{general} measures we mean generalizations of the ray basis theorem and related results to measures from $\Meas$ that do not have to possess positive densities, or satisfy other simplifying conditions concerning, e.g., their support.} We do so in Section~\ref{section:ray basis}, where tools from convex geometry are employed to devise an elementary proof of a generalized version of the theorem. We provide conditions under which it is possible to cover the complement to $\Damu$ by halfspaces of $\mu$-mass $\alpha$. As a special case, we obtain connections between the median set, and the set of all points that allow covering $\R^d$ by their minimizing halfspaces. An important part of our contribution are the examples, that throughout the paper demonstrate that the conditions stated in our main results cannot be avoided. 

In Section~\ref{section:level sets} we thoroughly discuss the links of our general ray basis theorem with the properties of the central regions $\Damu$. It is known from the literature \cite[Proposition~6]{Rousseeuw_Ruts1999} that, writing $A\compl$ for the complement to $A \subseteq \R^d$ in $\R^d$, 
	\begin{equation} \label{intersection of closed halfspaces}
	\Damu = \bigcap \left\{H \in \half \colon \mu(H\compl)< \alpha\right\}.
	\end{equation}
Each $\Damu$ is thus an intersection of closed convex sets, and must be closed and convex itself. We begin from \eqref{intersection of closed halfspaces}, and specify relations between the central regions $\Damu$, the trimmed regions as considered in \cite{Nolan1992, Masse_Theodorescu1994} and \cite[Section~3.9.4.6]{VanDerVaart_Wellner1996}, and the floating body known from convex geometry \cite{Bobkov2010, Nagy_etal2019}. 

Our paper is concluded with three applications of our results in Section~\ref{section:applications}. The set of depth medians $\median$ is not necessarily a single point set. Especially for empirical measures $\mu$, that is measures corresponding to datasets, the median set is frequently full-dimensional. In Section~\ref{section:covering median} we propose to single out the collection of those medians that satisfy an additional covering property. We obtain a smaller collection of covering medians of $\mu$, which share qualitatively better properties than the general elements of $\median$. An algorithm for finding covering medians is given. In Section~\ref{section:computation} we obtain a consequence of the general ray basis theorem regarding the structure of the central regions $\Damu$ for $\mu$ an empirical measure. In that case, each facet $F$ of the convex polytope $\Damu$ is shown to lie in a hyperplane determined by data points. This observation promises applications in the computation of the trimmed regions $\Damu$ also in the case when they are not full-dimensional, in a spirit similar to that lately employed in \cite{Liu_etal2019}. Finally, in Section~\ref{section:Dyckerhoff} we provide, as an interesting by-product of our study, the minimal set of assumptions that guarantees the depth to be strictly monotone. As argued by \citet[Example~4.2]{Dyckerhoff2017}, strict monotonicity is one of the most important properties a depth can have. For a depth w.r.t. a probability measure $\mu$, it ensures the almost sure uniform convergence of the depth upper level sets estimated from the data towards their population counterparts. As such, sufficient conditions for strict monotonicity find applications in the estimation of the depth-trimmed regions $\Damu$ from data in multivariate statistics. The proofs of all theoretical results are provided in an extensive appendix accompanying the paper.

\subsection{Preliminaries and notation}

In the proofs of our results, we use tools from measure theory, as well as from the theory of convex sets. Our general reference to the concepts used from convexity theory is \cite{Schneider2014}. We now set the most important notations used in the paper, and state a preliminary observation about the halfspace depth that will be useful later. For a set $A \subseteq \R^d$ its interior, closure and boundary are denoted by $\intr{A}$, $\cl{A}$ and $\bd{A}$, respectively. Denote by $\aff{A}$ and $\conv{A}$ the affine hull and the convex hull of $A$. The sets $\relint{A}$, $\relcl{A}$ and $\relbd{A}$ represent the (relative) interior, closure and boundary of $A$ in the space $\aff{A}$, and $\dim A=\dim\left(\aff{A}\right)$ is the dimension of $A$. The complement to $A$ is $A\compl=\R^d\setminus A$. For sets $A$ and $B$ we write $A \subset B$ if $A \subseteq B$ and $A \ne B$. We say that $H\in \half$ is a touching halfspace of a non-empty convex set $A$ if $H \cap \cl{A} \neq \emptyset$ and $\intr{H}\cap A = \emptyset$. The collection of all touching halfspaces to $A$ is denoted by $\half(A)$. We also define $\half(\emptyset) = \half$. In this notation, $\half(x)$ is the same as $\half(\{x\})$ for $x \in \R^d$.

\smallskip\noindent\textbf{Well-behaved measures.} We say that $\mu\in\Meas$ is \emph{smooth} if $\mu(\bd{H})=0$ for each $H\in\half$. We call $\mu$ \emph{smooth at} a convex set $A \subset \R^d$ if $\mu(\bd{H}) = 0$ for all $H \in \half(A)$, and smooth at a point $x \in \R^d$ if it is smooth at $\left\{x\right\}$. Smoothness of $\mu$ at a point is a condition stronger than $\mu$ being atom-less; yet, it is still weaker than smoothness in the whole $\R^d$. A measure $\mu$ is said to have \emph{contiguous support} if the support of $\mu$ cannot be separated by a slab between two parallel hyperplanes of non-empty interior with zero $\mu$-mass. Finally, $\mu$ is said to have \emph{contiguous support at} a convex set $A \subset \R^d$ if
	\begin{equation}	\label{contiguous support}
	\mbox{for each $H^\prime \in \half(A)$ and $H^\prime \subset H \in \half$, $\bd{H} \cap \intr{A} \ne \emptyset$ implies $\mu(H^\prime) < \mu(H)$.}	
	\end{equation}
Note that the condition \eqref{contiguous support} is void if $\intr{A} = \emptyset$, and therefore it is enough to consider $A$ full-dimensional. In that case, \eqref{contiguous support} means that any shift $H \supset H^\prime$ of a touching halfspace $H^\prime$ of $A$ has $\mu$-mass larger than $H^\prime$. It is satisfied if, for instance, the set $A$ is a subset of the support of $\mu$. An absolutely continuous measure is smooth (at any $A$ convex). A measure with connected support has contiguous support (at any convex subset $A$ of the support of $\mu$).

\smallskip\noindent\textbf{Minimizing halfspaces.} For $x \in \R^d$ recall that $H \in \half(x)$ is a minimizing halfspace of $\mu\in\Meas$ at $x$ if $\D(x;\mu) = \mu(H)$. In general, the set of minimizing halfspaces of $\mu$ at a point may be empty. It is guaranteed to be non-empty if, for instance, the measure $\mu$ is smooth at $x$. Our first observation that will prove to be useful in the sequel is that it is always possible to find $H \in \half(x)$ with the property $\mu(\intr{H}) \leq \D\left(x;\mu\right)$. We call such a halfspace $H$ a \emph{generalized minimizing halfspace} of $\mu$ at $x$.

\begin{lemma}	\label{lemma:minimizing halfspace} 
For any $\mu \in \Meas$ there exists a generalized minimizing halfspace of $\mu$ at any point $x\in\R^d$. If $\mu$ is smooth at $x$, then there exists a minimizing halfspace of $\mu$ at $x$.
\end{lemma}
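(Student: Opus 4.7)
My plan is to exploit compactness of the sphere $\Sph$ combined with a Fatou-type semicontinuity of halfspace measures. By the definition $\D(x;\mu) = \inf_{u \in \Sph} \mu(H_{x,u})$, I pick a sequence $u_n \in \Sph$ with $\mu(H_{x,u_n}) \to \D(x;\mu)$. Compactness of the unit sphere then furnishes a subsequence, which I relabel as $u_n$, converging to some $u \in \Sph$. The natural candidate for the generalized minimizing halfspace is $H := H_{x,u} \in \half(x)$.

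The crux is the pointwise inclusion $\intr{H} \subseteq \liminf_{n \to \infty} H_{x,u_n}$, where the liminf of sets is understood in the usual sense. Indeed, $y \in \intr{H}$ amounts to $\langle y - x, u \rangle > 0$, and by continuity of the inner product the strict inequality $\langle y - x, u_n \rangle > 0$ persists for all $n$ sufficiently large, placing $y \in H_{x,u_n}$ eventually. Applying Fatou's lemma to the indicator functions (valid because $\mu$ is a finite measure) then gives $\mu(\intr{H}) \le \liminf_{n \to \infty} \mu(H_{x,u_n}) = \D(x;\mu)$, which is exactly the defining inequality of a generalized minimizing halfspace.

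For the second claim, smoothness of $\mu$ at $x$ means $\mu(\bd{K}) = 0$ for every $K \in \half(x)$. Applied to $H$ itself this upgrades the previous bound to $\mu(H) = \mu(\intr{H}) \le \D(x;\mu)$, while the reverse inequality $\mu(H) \ge \D(x;\mu)$ is immediate from $H \in \half(x)$ and the definition of the depth. Hence $\mu(H) = \D(x;\mu)$, so $H$ is a genuine minimizing halfspace. I anticipate no serious obstacle; the only conceptually delicate point is that the one-sided semicontinuity under $u_n \to u$ is available only for the open halfspace $\intr{H}$ and not for $H$ itself, which is precisely why the weaker \emph{generalized} notion is the correct substitute in the absence of smoothness, and why the smoothness hypothesis enters exactly where it does in the second assertion.
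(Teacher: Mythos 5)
Your proof is correct and follows essentially the same route as the paper: extract a convergent subsequence of near-minimizing normals by compactness of $\Sph$, then apply Fatou's lemma via the inclusion of $\intr{H}$ in the set-theoretic $\liminf$ of the approximating halfspaces, with smoothness at $x$ used only to pass from $\intr{H}$ to $H$ at the end. No gaps.
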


\smallskip\noindent\textbf{Additional notations.} In addition to the upper level set of the halfspace depth $\Damu = \left\{ x \in \R^d \colon \D\left(x;\mu\right) \geq \alpha \right\}$, we also consider the set
	\[	\Uamu = \left\{x\in \R^d\colon \D(x;\mu)>\alpha \right\} = \bigcup_{\beta > \alpha} \D_{\beta}(\mu).	\]
It is convex, but not always closed. As will be seen later in the paper, the set $\Uamu$ is also not open in general. Both $\Damu$ and $\Uamu$ are bounded for $\alpha > 0$.

A face of a convex set $A$ is a convex subset $F \subseteq A$ such that $x, y \in A$ and $(x + y)/2 \in F$ implies $x, y \in F$. Denote by $\faces{A}$ the set of all faces of $\cl{A}$. A facet of $A$ is a face of $A$ of dimension $\dim(A) - 1$. For $F\in\faces{A}$ and $A$ convex, denote by $\half(A,F)=\{H\in\half(A)\colon F\subseteq \bd{H}\}$ the collection of all halfspaces touching $A$ at its face $F$. We say that a sequence of halfspaces $\left\{H_{x_n,u_n}\right\}_{n=1}^\infty \subset \half$ converges to $H_{x,u} \in \half$ if $x_n \to x$ in $\R^d$ and $u_n \to u$ in $\Sph$. 

We write $L(x,y)$ and $L[x,y]$ for the relatively open (that is, not containing $x$ and $y$) and the relatively closed (containing $x$ and $y$) line segment between distinct point $x,y\in \R^d$, respectively, and $l(x,y)$ for the infinite line determined by $x$ and $y$.

%
%

\section{The general ray basis theorem}	\label{section:ray basis}

The standard ray basis theorem, as dubbed by \citet{Rousseeuw_Ruts1999}, asserts that under certain conditions on a measure $\mu \in \Meas$, the halfspace median $x$ of $\mu$ is characterized by the fact that $\R^d$ can be covered by minimizing halfspaces of $x$. The formal statement of the theorem is given below.

\begin{theorem}[{\cite[Propositions~8 and~12]{Rousseeuw_Ruts1999}}]	\label{theorem:standard ray basis}
For $\mu \in \Meas$, the following holds true:
	\begin{enumerate}[label=(\roman*), ref=(\roman*)]
	\item \label{direct ray basis} If for some $x \in \R^d$ 
		\begin{equation}	\label{ray basis}
		\R^d = \bigcup\left\{ H \in \half(x) \colon \mu(H) = \D(x;\mu) \right\},	
		\end{equation}
	then $x$ is a median of $\mu$, i.e. $\D(x;\mu) = \alpha^*(\mu)$. 
	\item \label{inverse ray basis} If $x$ is a median of $\mu$ and, in addition, $\mu$ is absolutely continuous with a density that is continuous and positive in an open convex set, then \eqref{ray basis} is true for $x$.
	\end{enumerate}
\end{theorem}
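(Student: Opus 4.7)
For \ref{direct ray basis} the argument is immediate. Fix any $y \in \R^d$. By the covering hypothesis \eqref{ray basis}, there exists $H \in \half(x)$ with $\mu(H) = \D(x;\mu)$ such that $y \in H$. The equivalent form $\D(y;\mu) = \inf_{H^\prime \in \half,\, y \in H^\prime} \mu(H^\prime)$ from \eqref{halfspace depth} then yields $\D(y;\mu) \leq \mu(H) = \D(x;\mu)$. Since $y$ was arbitrary, $x$ attains the global supremum of $\D(\cdot;\mu)$, hence $\D(x;\mu) = \alpha^*(\mu)$.

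For \ref{inverse ray basis} my plan is to argue the contrapositive: assuming the covering \eqref{ray basis} fails at a supposed median $x$, I would produce a nearby point $x^\prime$ with $\D(x^\prime;\mu) > \D(x;\mu) = \alpha^*$, contradicting the definition of $\alpha^*$. Fix $y \in \R^d$ not covered by any minimizing halfspace of $\mu$ at $x$, and set $v = (y-x)/\|y-x\|$ (with $y \neq x$, since Lemma~\ref{lemma:minimizing halfspace} and smoothness guarantee that some minimizing halfspace exists at $x$ and it trivially contains $x$). The hypothesis on $y$ translates to $\mu(H_{x,u}) > \alpha^*$ for every $u$ in the closed hemisphere $\Sph^+ := \{u \in \Sph \colon \langle v, u\rangle \geq 0\}$, which is exactly the set of directions with $y \in H_{x,u}$. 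Absolute continuity makes $u \mapsto \mu(H_{x,u})$ continuous on $\Sph$, and compactness of $\Sph^+$ yields a uniform gap $\eta := \min_{u \in \Sph^+}\mu(H_{x,u}) - \alpha^* > 0$. I would then consider $x^\prime = x + tv$ for small $t > 0$ and estimate $\D(x^\prime;\mu) = \inf_{u \in \Sph}\mu(H_{x^\prime,u})$ direction by direction.

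The perturbation analysis rests on the observation that $H_{x,u} \triangle H_{x^\prime,u}$ is a slab between two parallel hyperplanes of width $t|\langle v,u\rangle|$. Continuity and positivity of the density on an open convex set containing a neighbourhood of $x$ should deliver, uniformly in $u \in \Sph$, an upper bound of the form $|\mu(H_{x^\prime,u}) - \mu(H_{x,u})| \leq Ct$ and, on the opposite hemisphere $\Sph^- := \Sph \setminus \Sph^+$, a matching lower bound $\mu(H_{x^\prime,u}) \geq \mu(H_{x,u}) + ct|\langle v,u\rangle|$. On $\Sph^+$ these combine to give $\mu(H_{x^\prime,u}) \geq \alpha^* + \eta - Ct > \alpha^*$ as soon as $t < \eta/C$; on $\Sph^-$ they force strict increase. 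The hard part will be matching the two regimes near the equatorial great circle $\{u \colon \langle v,u\rangle = 0\}$: the lower bound on $\Sph^-$ degenerates there, while the upper bound on $\Sph^+$ does not; reconciling them uniformly is precisely where the open-convex-set positivity of the density is indispensable, as it forces any thin slab through a neighbourhood of $x$ to accumulate mass proportional to its width independently of direction.
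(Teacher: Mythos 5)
Your part~\ref{direct ray basis} is exactly the paper's argument and is correct. For part~\ref{inverse ray basis} you take a genuinely different route: the paper never perturbs the median, but instead derives the covering from its general machinery (Lemma~\ref{lemma:basic condition} produces, for every point outside $\intr{\median}$, a halfspace $H$ with $\mu(\intr{H})\leq\alpha^*(\mu)$ whose interior misses the median set; Lemma~\ref{lemma:basic lemma} upgrades these to touching halfspaces by a limit of halfspaces along a visible face; Corollary~\ref{corollary:ray basis} then pins them to pass through any given median point using the dimension result of Corollary~\ref{corollary:dimension of median}). That route yields the statement under much weaker hypotheses. Your perturbation argument is the classical Rousseeuw--Ruts style proof and is legitimate in principle, but as written it has two gaps.

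First, the uniform bound $|\mu(H_{x',u})-\mu(H_{x,u})|\leq Ct$ does not follow from the hypotheses: the density is only assumed continuous on an open convex set, and elsewhere it may be unbounded, so a slab of width $t$ need not carry mass $O(t)$. This is repairable: writing $S^+=\left\{u\in\Sph\colon\langle v,u\rangle\geq 0\right\}$ for your closed hemisphere, a subsequence argument (if $u_n\to u$ and the slab widths tend to $0$, the limsup of the slabs lies in $\bd{H_{x,u}}$, which is $\mu$-null) gives $\sup_{u\in\Sph}\mu\left(\left\{z\colon 0\leq\langle z-x,u\rangle\leq t\right\}\right)\to 0$ as $t\to 0$, and this uniform $o(1)$ suffices on $S^+$. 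Second, and more seriously, your stated mechanism for closing the equatorial gap fails. On the open hemisphere $S^-$ near the equator the slab has width $t|\langle v,u\rangle|\to 0$, so no positivity assumption can bound its mass below independently of $u$: the lower bound $ct|\langle v,u\rangle|$ genuinely degenerates, and if minimizing directions of $x$ accumulate at the equator from the $S^-$ side, your infimum could collapse to $\alpha^*(\mu)$. The correct fix uses no density argument there: equatorial directions belong to the closed hemisphere $S^+$, where you already have the uniform gap $\eta$, and $u\mapsto\mu(H_{x,u})$ is continuous, so some open neighbourhood $N$ of the equator satisfies $\mu(H_{x,u})\geq\alpha^*(\mu)+\eta/2$. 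On $S^-\cap N$ you then use only the monotonicity $\mu(H_{x',u})\geq\mu(H_{x,u})$, while on $S^-\setminus N$ you have $|\langle v,u\rangle|\geq\delta>0$ and your slab lower bound applies. With these two repairs the perturbation proof closes.
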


The proof of the direct part~\ref{direct ray basis} of Theorem~\ref{theorem:standard ray basis} is simple: since $\R^d$ is covered by halfspaces of $\mu$-mass exactly $\D(x;\mu)$, each $y \in \R^d$ must be contained in such a halfspace $H_y$, and consequently $\D(y;\mu) \leq \mu(H_y) = \D(x;\mu)$, as follows directly from the definition of the halfspace depth \eqref{halfspace depth}. The proof of the so-called \emph{inverse ray basis theorem}, stated in part~\ref{inverse ray basis}, is slightly more involved, and technical. Our aim is to obtain a result similar to Theorem~\ref{theorem:standard ray basis} for any depth region $\Damu$, under minimal conditions. We prove that for well-behaved measures it is possible to cover all points outside $\intr{\Damu}$ with closed halfspaces $H$ whose $\mu$-mass is bounded from above by $\alpha$. For measures that may not be smooth the appropriate formulation of the latter condition turns out to be $\mu(\intr{H})\leq \alpha$. In the following lemma we introduce a condition that will play an important role in the formulation of the extended ray basis theorem. We treat not only the depth-trimmed regions $\Damu$, but also $\Uamu$, as well as their interiors. 

\begin{lemma} \label{lemma:basic condition} 
Let $\mu\in \Meas$, $\alpha \in \R$ and $S\in \{\Damu,\Uamu,\intr{\Damu},\intr{\Uamu}\}$. Then
	\begin{equation}	\label{basic condition}
	S \compl \subseteq \left(\intr{S}\right)\compl \subseteq \bigcup	\{	H \in \half \colon \mu(\intr{H})\leq \alpha\mbox{ and }\intr{H}\cap S=\emptyset	\}.
	\end{equation}
\end{lemma}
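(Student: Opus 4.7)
The first inclusion $S\compl \subseteq (\intr{S})\compl$ is immediate from $\intr{S} \subseteq S$. For the second, I would fix $x \in (\intr{S})\compl$ and write $T = \Damu$ if $S \in \{\Damu, \intr{\Damu}\}$, and $T = \Uamu$ if $S \in \{\Uamu, \intr{\Uamu}\}$; in all four cases $S \subseteq T$ and $\intr{S} = \intr{T}$. The goal is to produce a single $H \in \half$ with $x \in H$, $\mu(\intr{H}) \leq \alpha$, and $\intr{H} \cap S = \emptyset$. I would split on whether or not $x \in T$.

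If $x \notin T$, Lemma~\ref{lemma:minimizing halfspace} supplies a generalized minimizing halfspace $H \in \half(x)$ with $\mu(\intr{H}) \leq \D(x;\mu)$. One reads off $\D(x;\mu) < \alpha$ when $T = \Damu$ and $\D(x;\mu) \leq \alpha$ when $T = \Uamu$, so $\mu(\intr{H}) \leq \alpha$ in either case. For the disjointness $\intr{H} \cap S = \emptyset$, given $y \in \intr{H}$ the parallel halfspace $H^\prime \in \half(y)$ sharing the inner normal of $H$ satisfies $H^\prime \subseteq \intr{H}$, so $\D(y;\mu) \leq \mu(H^\prime) \leq \mu(\intr{H})$; this excludes $y$ from $T \supseteq S$ (strictly when $T = \Damu$, non-strictly when $T = \Uamu$, and both suffice).

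If $x \in T \setminus \intr{S}$, I exploit $\intr{S} = \intr{T}$: from $x \notin \intr{T}$ one has $x \in \cl{T\compl}$, so there is a sequence $x_n \in T\compl$ with $x_n \to x$. Applying the preceding paragraph to each $x_n$ yields halfspaces $H_n = H_{x_n, u_n}$ with $\mu(\intr{H_n}) \leq \alpha$ and $\intr{H_n} \cap S = \emptyset$. By compactness of $\Sph$, I pass to a subsequence along which $u_n \to u$, and obtain the limit halfspace $H^* = H_{x, u} \in \half(x)$, so in particular $x \in H^*$.

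The transfer of the two properties from $\{H_n\}$ to $H^*$ is, I expect, the principal technical step. For any $y \in \intr{H^*}$ the inequality $\langle u, y - x\rangle > 0$ persists under the limit as $\langle u_n, y - x_n\rangle > 0$ for all sufficiently large $n$, i.e.\ $y \in \intr{H_n}$ eventually. This yields $\intr{H^*} \cap S = \emptyset$ directly (any $y$ in the intersection would eventually lie in $\intr{H_n}$, contradicting $\intr{H_n} \cap S = \emptyset$), and Fatou's lemma on the indicators of $\intr{H_n}$ gives
\[	\mu(\intr{H^*}) \leq \liminf_{n \to \infty} \mu(\intr{H_n}) \leq \alpha.	\]
Only the non-strict bound survives the limit, because the $\mu$-mass of an open halfspace is merely lower semicontinuous under convergence of halfspaces --- which is precisely why the lemma is formulated with $\mu(\intr{H}) \leq \alpha$ rather than with strict inequality.
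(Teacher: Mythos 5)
Your proof is correct and follows essentially the same route as the paper's: reduce the four choices of $S$ to the two level sets $\Damu$ and $\Uamu$, cover points of insufficient depth by the generalized minimizing halfspace of Lemma~\ref{lemma:minimizing halfspace}, and handle the remaining points of $(\intr{S})\compl$ by extracting a convergent subsequence of halfspaces, transferring the mass bound via Fatou and the disjointness via the limes-inferior containment of the open halfspaces. The paper merely packages this limiting step in a separate auxiliary lemma (Lemma~\ref{lemma:convergent sequence}); the content is identical.
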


Application of Lemma~\ref{lemma:basic condition} to the median set $S=\intr{\median}$ gives a slightly weaker version of the standard inverse ray basis theorem. Suppose that the measure $\mu\in\Meas$ satisfies the conditions of part~\ref{inverse ray basis} of Theorem~\ref{theorem:standard ray basis}. First, it is not difficult to observe that for such $\mu$ our auxiliary Lemma~\ref{lemma:minimizing halfspace} about the existence of a generalized minimizing halfspace ensures that the median set $\median$ cannot be full-dimensional.\footnote{This claim is proved under weaker conditions as Corollary~\ref{corollary:dimension of median} in Section~\ref{section:proof of dimension} in the Appendix.} 
Because the median set is always convex, it is not full-dimensional if and only if its interior is empty. Therefore, for any $\mu$ with a positive continuous density, Lemma~\ref{lemma:basic condition} gives that the whole space can be covered by halfspaces \begin{enumerate*}[label=(\roman*)] \item of $\mu$-mass at most $\alpha^*(\mu)$; and \item without a particular connection to any fixed point $x \in \median$. \end{enumerate*} 

Our intention is now to extend the results of Lemma~\ref{lemma:basic condition} by restricting the halfspaces on the right hand side of \eqref{basic condition} to only those that touch $S$, i.e. to the collection $\half(S)$. For that purpose, we introduce additional notation.

\begin{definition} 
For a convex set $A \subset \R^d$ and a point $x\notin \intr{A}$ define 
	\begin{equation*}	
	\mathfrak{F}(x,A) = 
					\begin{cases}
					\left\{F\in\faces{A} \colon \relint{\conv{F\cup \{x\}}} \cap \cl{A} = \emptyset \right\} & \mbox{if } x\notin \cl{A}, \\
					\left\{F\in\faces{A} \colon x \in \relint{F} \right\} & \mbox{if } x \in \bd{A}.
					\end{cases}	
	\end{equation*}
\end{definition}

The collection $\mathfrak{F}(x,A)$ consists of those faces of the closed convex set $\cl{A}$ that are completely visible from a point $x \notin \intr{A}$. Note that $F \in \mathfrak{F}(x,A)$ implies $\dim(F) < d$, and that $\mathfrak{F}(x,\emptyset) = \{\emptyset\}$. The concept of visible faces relates to the theory of illumination of convex bodies by external light sources \cite{Bezdek_Khan2018}; for its application to the statistics of the depth see \cite{Nagy_Dvorak2021}. Note that any point $x \notin \intr{A}$ illuminates at least one non-empty face of a non-empty convex set $A$; for details see Lemma~\ref{lemma:faces} presented in the Appendix.

Now we are able to extend Lemma~\ref{lemma:basic condition} to the touching halfspaces. Starting from \eqref{basic condition}, our intention is to find $H \in \half(S)$ that covers $x \notin S$ given, with the properties as on the right hand side of \eqref{basic condition}. The main idea is to approach any point $y\in F \in \mathfrak{F}\left(x,S\right)$ by a sequence $\left\{y_n\right\}_{n=1}^\infty \subset \left(\cl{S}\right)\compl$, that converges to $y$ from the outside of $S$. Lemma~\ref{lemma:basic condition} gives that for each $y_n$ there is a halfspace $H_n \ni y_n$ whose interior has empty intersection with $S$ and $\mu(\intr{H_n}) \leq \alpha$, see the left hand panel of Figure~\ref{figure:proof}. Since $y_n \to y$, there exists a convergent subsequence of $\{H_n\}_{n=1}^\infty$ whose limiting halfspace $H$ can be shown to be touching $S$, containing $x$, and satisfying $\mu(\intr{H})\leq \alpha$, as we wanted to show. The formal proof of Lemma~\ref{lemma:basic lemma} is postponed to the Appendix.

\begin{figure}[htpb]
\includegraphics[width=\twofig\textwidth]{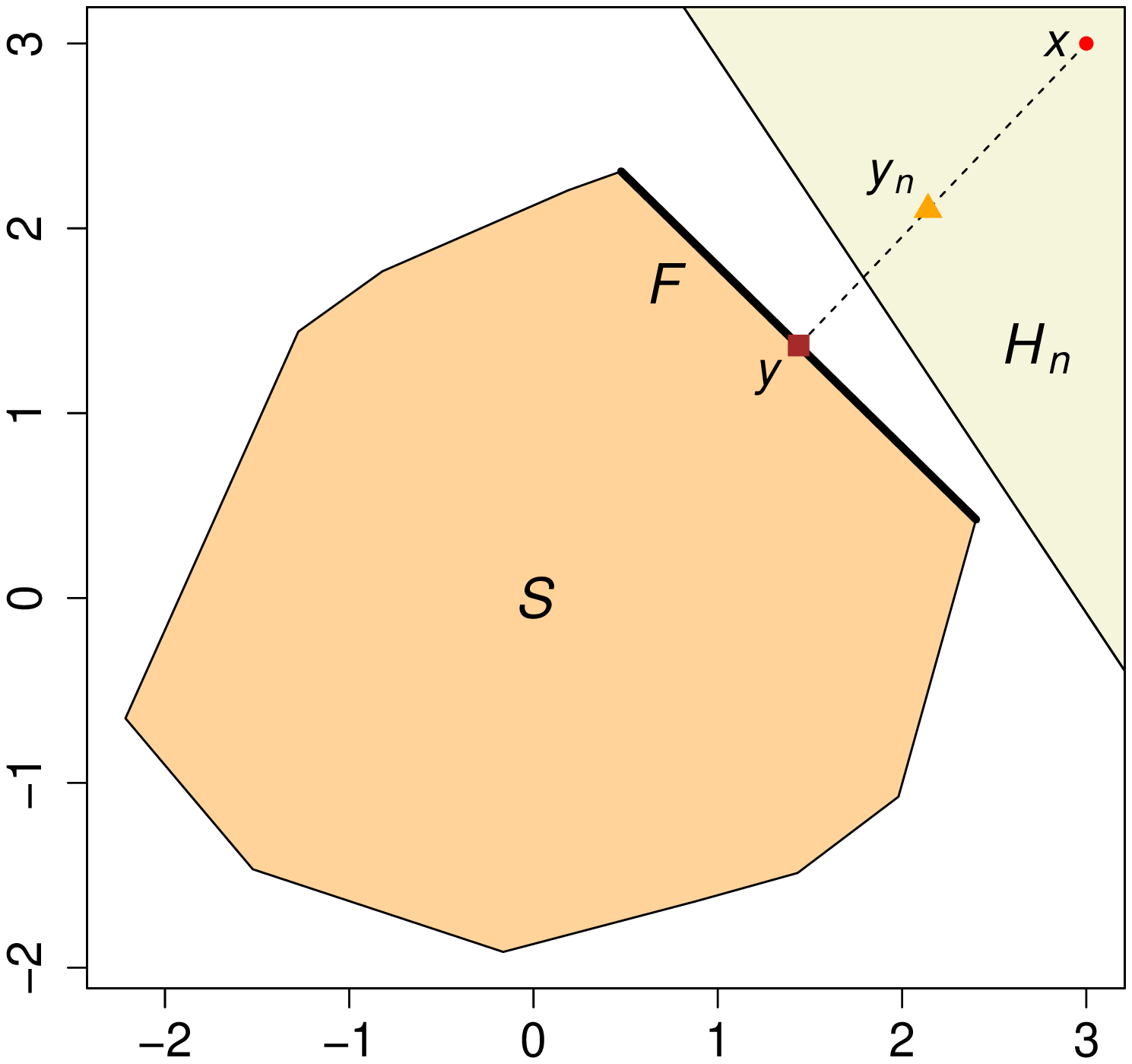} \quad
\includegraphics[width=\twofig\textwidth]{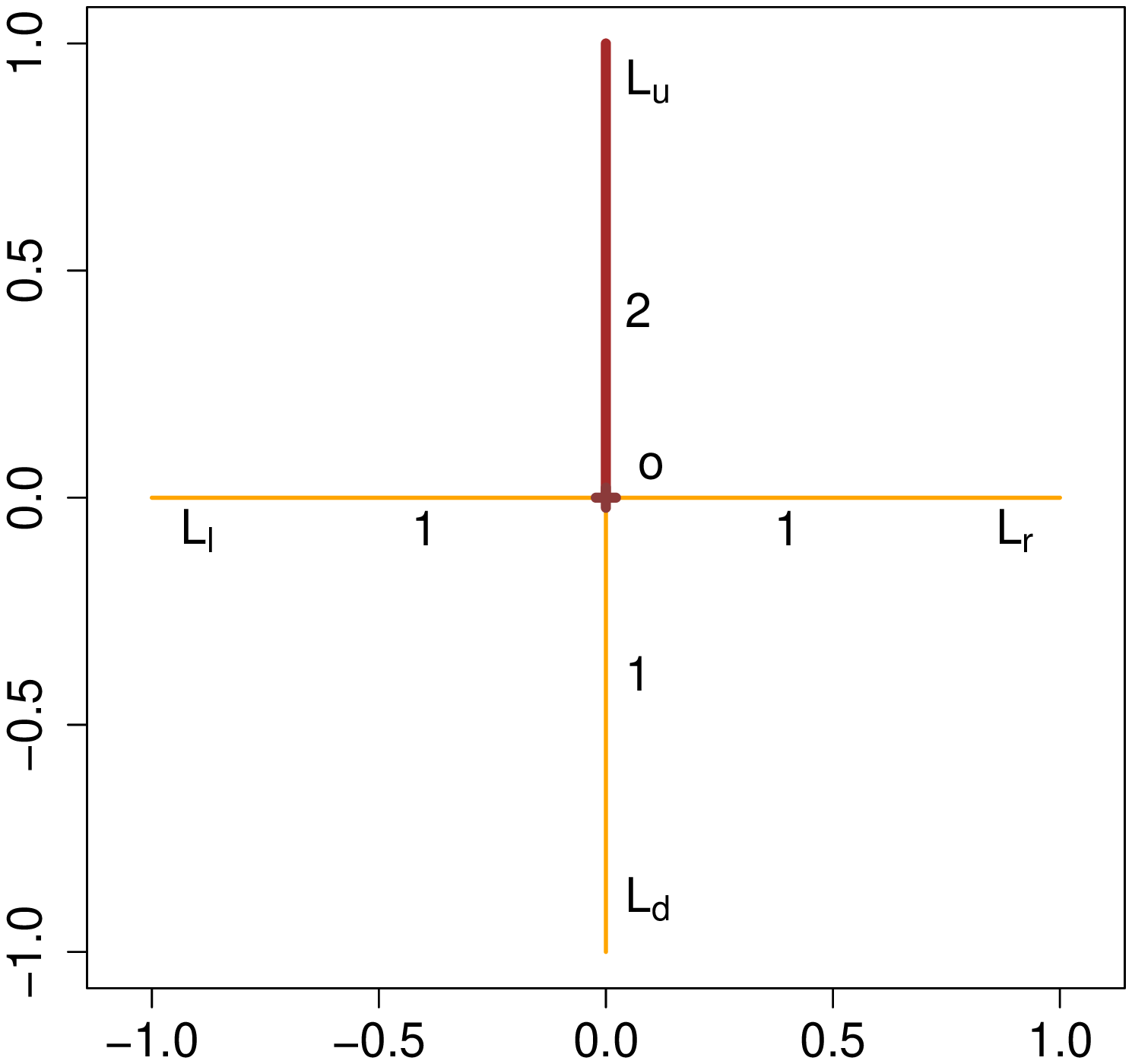}
\caption{Left panel: Proof of Lemma~\ref{lemma:basic lemma}. As $y_n \to y$, a subsequence of halfspaces $\left\{H_n\right\}_{n=1}^\infty$ converges to $H \in \half(y)$ that touches $S$, contains $x$ and the face $F$, and $\mu(\intr{H}) \leq \alpha$. Right panel: Local smoothness is a necessary condition for the inverse ray basis theorem. The origin $o\in\R^2$ is the unique median of $\mu$ from Example~\ref{example:unique median is not enough}, but it fails to satisfy the covering condition \eqref{ray basis}.}
\label{figure:proof}
\end{figure}

\begin{lemma} \label{lemma:basic lemma}
Consider $\mu\in\Meas$ and a bounded convex set $S \subset \R^d$ satisfying \eqref{basic condition} with $\alpha \in \R$. For any $x\notin S$ and $F\in \mathfrak{F}(x,S)$  there exists $H(x,F)\in\half(S,F)$ such that $x\in H(x,F)$ and $\mu(\intr{H(x,F)})\leq \alpha$. In particular,
	\[	 S\compl \subseteq (\intr{S})\compl = \bigcup\left\{ H \in \half(S) \colon \mu(\intr{H}) \leq \alpha \right\}.	\]
\end{lemma}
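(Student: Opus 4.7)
The plan is to construct the halfspace $H(x,F)$ as a subsequential limit of halfspaces supplied by Lemma~\ref{lemma:basic condition}. First I would fix $y\in\relint{F}$ if $x\notin\cl{S}$ and set $y=x$ if $x\in\bd{S}$. In the former case, the definition of $\mathfrak{F}(x,S)$ guarantees $L(y,x)\subset(\cl{S})\compl$, so one may take $y_n=(1-t_n)y+t_n x$ with $t_n\searrow 0$; in the latter, $x\in\bd{\cl{S}}$ permits a sequence $y_n\to x$ with $y_n\notin\cl{S}$. Applying Lemma~\ref{lemma:basic condition} to each $y_n$ (which lies in $(\intr{S})\compl$) yields a halfspace $H_n=\{z\colon\langle u_n,z\rangle\geq c_n\}$ with $y_n\in H_n$, $\intr{H_n}\cap S=\emptyset$, and $\mu(\intr{H_n})\leq\alpha$.

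Next I would extract a convergent subsequence. Since $S$ is bounded and $\sup_{s\in S}\langle u_n,s\rangle\leq c_n\leq\langle u_n,y_n\rangle$ with $\{y_n\}$ bounded, both $c_n\in\R$ and $u_n\in\Sph$ lie in compact sets, so along a subsequence $u_n\to u$ and $c_n\to c$, defining a limit $H=\{z\colon\langle u,z\rangle\geq c\}$. The verification that $H$ has the required properties then splits into four steps. \emph{(i)} $\intr{H}\cap S=\emptyset$: any $z\in\intr{H}$ satisfies $\langle u,z\rangle>c$, hence $z\in\intr{H_n}$ for large $n$, which would contradict $\intr{H_n}\cap S=\emptyset$ if one also had $z\in S$. \emph{(ii)} $F\subset\bd{H}$: passing $\langle u_n,y\rangle\leq c_n$ (valid because $y\in\cl{S}\subset\cl{H_n\compl}$) and $\langle u_n,y_n\rangle\geq c_n$ to the limit yields $y\in\bd{H}$, and because $y\in\relint{F}$ with $F\subset\cl{H\compl}$, convexity forces the whole face $F$ into $\bd{H}$. \emph{(iii)} $x\in H$: immediate when $y=x$; otherwise, substituting $y_n=(1-t_n)y+t_n x$ into $\langle u_n,y_n\rangle\geq c_n$ gives $t_n\langle u_n,x-y\rangle\geq c_n-\langle u_n,y\rangle\geq 0$, whence $\langle u,x-y\rangle\geq 0$ in the limit. \emph{(iv)} $\mu(\intr{H})\leq\alpha$: the inclusion $\intr{H}\subseteq\liminf_n\intr{H_n}$ together with Fatou's lemma applied to indicator functions delivers $\mu(\intr{H})\leq\liminf_n\mu(\intr{H_n})\leq\alpha$.

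The ``in particular'' clause then follows readily. The inclusion $S\compl\subseteq(\intr{S})\compl$ is trivial, and each $H\in\half(S)$ has $\intr{H}\cap S=\emptyset$ and, by convexity of $S$, also $\bd{H}\cap\intr{S}=\emptyset$, so $H\subseteq(\intr{S})\compl$; conversely every $x\notin\intr{S}$ illuminates some $F\in\mathfrak{F}(x,S)$ (cf.~Lemma~\ref{lemma:faces} in the Appendix), and the main construction produces the covering halfspace. The principal obstacle is the joint limit argument—guaranteeing that a single subsequence yields a halfspace simultaneously touching $S$ at $F$, containing $x$, and respecting the $\mu$-mass bound—which hinges essentially on the upper semi-continuity of $\mu(\intr{\cdot})$ along converging halfspaces via Fatou.
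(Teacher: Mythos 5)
Your proposal is correct and follows essentially the same route as the paper's proof: approach a point $y\in\relint{F}$ (or $x$ itself when $x\in\bd{S}$) by points outside $\cl{S}$, invoke \eqref{basic condition} for each, extract a convergent subsequence of halfspaces, and verify the touching, containment, and mass properties of the limit via Fatou and the relative-interior argument of Lemma~\ref{lemma:boundary of halfspace}. If anything, your step \emph{(iii)} makes explicit the verification that $x\in H(x,F)$, which the paper's write-up leaves implicit; the remaining degenerate cases ($F=\emptyset$ or $\intr{S}=\emptyset$) are dispatched in both treatments by falling back on \eqref{basic condition} directly.
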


Lemma~\ref{lemma:basic lemma} is technical, but presents an important intermediate result. It provides multiple consequences that we explore in the sequel. Lemma~\ref{lemma:basic condition} allows us to apply Lemma~\ref{lemma:basic lemma} to the sets $\Damu$, $\Uamu$ or their interiors, for any $\mu \in \Meas$. This is the foundation for the most general statement of the ray basis theorem that can be devised for general measures. To obtain finer results, it is necessary to impose additional assumptions. In the literature on the halfspace depth, two typical assumptions are the smoothness of $\mu$, and the contiguity of its support. We require these conditions only locally, at the considered depth regions. Application of Lemma~\ref{lemma:basic lemma} to $S = \Damu$ yields the following generalization of the inverse ray basis theorem.

\begin{theorem}[\textbf{General inverse ray basis theorem}] \label{theorem:ray basis depth regions}
For any $\mu\in\Meas$, $\alpha\in\R$, $x \notin \intr{\Damu}$ and $F\in \mathfrak{F}(x,\Damu)$ there exists $H(x,F)\in\half(\Damu,F)$ such that $x\in H(x,F)$ and $\mu(\intr{H(x,F)})\leq \alpha$.
In particular,
	\begin{equation}	\label{D equality}
	\left(\intr{\Damu}\right)\compl = \bigcup\left\{ H \in \half(\Damu) \colon \mu(\intr{H}) \leq \alpha \right\}.	
	\end{equation}
Additionally, if	
\begin{enumerate}[label=(\roman*), ref=(\roman*)]
\item	\label{smooth case}	$\mu$ is smooth at $\Damu$, then $\mu(H(x,F))= \alpha$ and 
\[	 \left(\intr{\Damu}\right)\compl = \bigcup\left\{ H \in \half(\Damu) \colon \mu(H) = \alpha \right\}.	\]
\item	\label{contiguous support case}	$\mu$ has contiguous support at $\Damu$, then
\[	\left(\intr{\Damu}\right)\compl = \bigcup\left\{ H \in \half \colon \mu(\intr{H}) \leq \alpha \right\}.	\]
\item	\label{smooth and contiguous support case}	$\mu$ is smooth at $\Damu$ and has contiguous support at $\Damu$, then
\[	\left(\intr{\Damu}\right)\compl = \bigcup\left\{ H \in \half \colon \mu(H) = \alpha \right\}.	\]
\end{enumerate}	
\end{theorem}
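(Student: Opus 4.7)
My plan is to view Theorem~\ref{theorem:ray basis depth regions} as the specialisation of Lemma~\ref{lemma:basic lemma} to the set $S = \Damu$. When $\alpha \leq 0$ we have $\Damu = \R^d$ and the statement is vacuous, so I would assume $\alpha > 0$, making $\Damu$ compact and convex. Lemma~\ref{lemma:basic condition} then verifies hypothesis~\eqref{basic condition} for $S = \Damu$, and Lemma~\ref{lemma:basic lemma} directly supplies $H(x,F) \in \half(\Damu,F)$ whenever $x \notin \Damu$. The residual case $x \in \bd{\Damu}$, where $F \in \mathfrak{F}(x,\Damu)$ means $x \in \relint{F}$, is not covered by Lemma~\ref{lemma:basic lemma} and I plan to handle it by approximation. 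Fixing a supporting halfspace $H^+$ of $\Damu$ with $F \subseteq \bd{H^+}$ (which exists, as $F$ is either a proper face of $\Damu$ or else $\Damu$ itself sits in a hyperplane when not full-dimensional), I would choose $y_n \in \intr{H^+}$ with $y_n \to x$ and check that $F \in \mathfrak{F}(y_n,\Damu)$: every relative-interior point of $\conv{F \cup \{y_n\}}$ lies in $\intr{H^+}$, which is disjoint from $\Damu$. Lemma~\ref{lemma:basic lemma} applied to each $y_n$ then produces $H_n \in \half(\Damu,F)$ containing $y_n$ with $\mu(\intr{H_n}) \leq \alpha$, and since all $H_n$ have boundary through $\aff{F}$, a convergent subsequence $H_n \to H$ exists. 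The limit inherits $F \subseteq \bd{H}$ and $x \in H$, still touches $\Damu$ (any point of $\intr{H} \cap \Damu$ would lie in $\intr{H_n}$ for $n$ large), and satisfies $\mu(\intr{H}) \leq \alpha$ by Fatou's lemma applied to $\I{\intr{H_n}}$, using $\liminf_n \I{\intr{H_n}}(z) \geq \I{\intr{H}}(z)$ pointwise.

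Equation~\eqref{D equality} is then immediate: the inclusion $\supseteq$ is free, since any $H \in \half(\Damu)$ has $\intr{H} \cap \Damu = \emptyset$ and hence $H \cap \intr{\Damu} = \emptyset$, while $\subseteq$ is the pointwise statement, combined with Lemma~\ref{lemma:faces} to guarantee $\mathfrak{F}(x,\Damu) \neq \emptyset$ for every $x \notin \intr{\Damu}$. The three addenda all rest on the observation that the halfspace $H$ from the main conclusion touches $\Damu$ at the non-empty face $F$, so any $y \in F \subseteq \Damu$ lies in $H$ and satisfies $\D(y;\mu) \geq \alpha$, whence $\mu(H) \geq \alpha$. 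For case~\ref{smooth case}, smoothness at $\Damu$ yields $\mu(H) = \mu(\intr{H}) \leq \alpha$, giving $\mu(H) = \alpha$. For case~\ref{contiguous support case}, only the inclusion $\supseteq$ needs verification: assuming $\mu(\intr{H}) \leq \alpha$ and some $y \in H \cap \intr{\Damu}$, I would first perturb $y$ into $\intr{H} \cap \intr{\Damu}$ and form the parallel translate $H_y$ of $H$ with $y \in \bd{H_y}$, so that $H_y \subset \intr{H}$ and $\mu(H_y) = \alpha$ (the upper bound from $H_y \subset \intr{H}$, the lower from $y \in \bd{H_y} \cap \Damu$). Shifting $\bd{H_y}$ further in the inner normal direction until $\Damu$ is just touched produces a touching $H^* \in \half(\Damu)$ with $H^* \subset H_y$; contiguity at $\Damu$, applied to $H^* \subset H_y$ using $y \in \bd{H_y} \cap \intr{\Damu}$, then forces $\mu(H^*) < \mu(H_y) = \alpha$, contradicting $\mu(H^*) \geq \D(z;\mu) \geq \alpha$ for any touching point $z \in \bd{H^*} \cap \Damu$. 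Case~\ref{smooth and contiguous support case} follows by combining the previous two.

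The principal obstacle I foresee is the boundary case $x \in \bd{\Damu}$ in the pointwise part: Lemma~\ref{lemma:basic lemma} does not directly apply, and passing the three properties $F \subseteq \bd{H_n}$, $\intr{H_n} \cap \Damu = \emptyset$, and $\mu(\intr{H_n}) \leq \alpha$ simultaneously through a limit requires some care, the open-halfspace mass bound being the least automatic ingredient. A secondary delicate step is the reverse inclusion in case~\ref{contiguous support case}, where the contiguity hypothesis must be invoked only after two consecutive inward shifts of the candidate halfspace.
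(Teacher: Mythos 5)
Your proposal is correct and follows essentially the same route as the paper: the general pointwise claim and \eqref{D equality} are obtained from Lemmas~\ref{lemma:basic condition} and~\ref{lemma:basic lemma} applied to $S=\Damu$, part~\ref{smooth case} from $\mu(H(x,F))=\mu(\intr{H(x,F)})\leq\alpha$ combined with $\mu(H(x,F))\geq\alpha$ via the non-empty face $F\subseteq\Damu$, and part~\ref{contiguous support case} by shifting a halfspace of small mass inward to a touching halfspace and invoking contiguity. Your separate approximation for $x\in\bd{\Damu}$ merely reproduces Case~(iii) of the paper's proof of Lemma~\ref{lemma:basic lemma} (whose ``in particular'' conclusion already covers all of $\left(\intr{S}\right)\compl$), and your extra perturbation of $y$ into $\intr{H}\cap\intr{\Damu}$ in part~\ref{contiguous support case} is a harmless, if anything clarifying, variant of the paper's argument.
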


Note that if $\Damu$ fails to be full-dimensional, the left hand sides in the formulas in Theorem~\ref{theorem:ray basis depth regions} are all $\R^d$, and the theorem therefore gives conditions under which the whole sample space can be covered by halfspaces of limited $\mu$-mass. For $\Damu$ contained in a hyperplane, also the condition from part~\ref{contiguous support case} of Theorem~\ref{theorem:ray basis depth regions} is trivially satisfied.

Before moving to the discussion about the relevance of Theorem~\ref{theorem:ray basis depth regions}, we state an analogous result for $\Uamu$ as another corollary of Lemma~\ref{lemma:basic lemma}. Since obviously $\Uamu \subseteq \Damu$, its general statement is a refinement of the first claim \eqref{D equality} of Theorem~\ref{theorem:ray basis depth regions}.

\begin{theorem} \label{theorem:ray basis additional}
For any $\mu\in\Meas$, $\alpha\in\R$, $x \notin \intr{\Uamu}$ and $F\in \mathfrak{F}(x,\Uamu)$ there exists $H(x,F)\in\half(\Uamu,F)$ such that $x\in H(x,F)$ and $\mu(\intr{H(x,F)})\leq \alpha$.
In particular,
	\begin{equation}	\label{U equality}
	\left(\intr{\Uamu}\right)\compl = \bigcup\left\{ H \in \half \colon \mu(\intr{H}) \leq \alpha \right\}.	
	\end{equation}
Additionally, if $\mu$ is smooth at $\Uamu$, then $\Uamu=\intr{\Uamu}$, so $\Uamu$ is open.
\end{theorem}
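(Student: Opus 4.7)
The plan is to derive Theorem~\ref{theorem:ray basis additional} as a direct application of Lemma~\ref{lemma:basic lemma} to $S = \Uamu$, mirroring the route taken for $\Damu$ in Theorem~\ref{theorem:ray basis depth regions}. Lemma~\ref{lemma:basic condition} already supplies the hypothesis \eqref{basic condition} for this $S$, and $\Uamu$ is convex and, at least for $\alpha > 0$, bounded. For $\alpha < 0$ both sides of \eqref{U equality} collapse: $\intr{\Uamu} = \R^d$ kills the left-hand side, while positivity of $\mu$ rules out any $H$ with $\mu(\intr{H}) \leq \alpha < 0$. The residual case $\alpha = 0$, in which $\Uamu$ can be unbounded, I would handle by exhausting $\Uamu = \bigcup_{\beta > 0} \D_\beta(\mu)$ and passing to the limit in Lemma~\ref{lemma:basic lemma} applied to each bounded $\D_\beta(\mu)$.

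With Lemma~\ref{lemma:basic lemma} in hand, the first assertion of the theorem — existence of a touching halfspace $H(x,F) \in \half(\Uamu, F)$ containing $x$ with $\mu(\intr{H(x,F)}) \leq \alpha$ — together with the stronger identity
$$ \left(\intr{\Uamu}\right)\compl = \bigcup \left\{ H \in \half(\Uamu) \colon \mu(\intr{H}) \leq \alpha \right\} $$
follows verbatim. The inclusion $\subseteq$ in \eqref{U equality} then comes for free from $\half(\Uamu) \subseteq \half$. For the reverse inclusion, I would fix $H = H_{y_0, u} \in \half$ with $\mu(\intr{H}) \leq \alpha$ and observe that for any $z \in \intr{H}$ the halfspace $H_{z,u}$ with the same inner normal lies inside $\intr{H}$, so $\D(z;\mu) \leq \mu(H_{z,u}) \leq \mu(\intr{H}) \leq \alpha$ and $z \notin \Uamu$. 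Since $\intr{H}$ is dense in $H$ and $\intr{\Uamu}$ is open, this forces $H \cap \intr{\Uamu} = \emptyset$.

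For the final smoothness claim I would argue by contradiction: suppose $x \in \Uamu \setminus \intr{\Uamu}$ and invoke the first part of the theorem to produce a touching halfspace $H \in \half(\Uamu)$ containing $x$ with $\mu(\intr{H}) \leq \alpha$. Smoothness at $\Uamu$ then gives $\mu(\bd{H}) = 0$, hence $\mu(H) = \mu(\intr{H}) \leq \alpha$, and the definition \eqref{halfspace depth} yields $\D(x;\mu) \leq \mu(H) \leq \alpha$, contradicting $x \in \Uamu$. The touching-halfspace refinement is essential here: since smoothness is assumed only at $\Uamu$ and not globally, an arbitrary halfspace drawn from \eqref{U equality} would not give $\mu(\bd{H}) = 0$.

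The main obstacle I anticipate is the unbounded regime $\alpha \leq 0$, where Lemma~\ref{lemma:basic lemma} does not apply verbatim to $S = \Uamu$. The exhaustion-and-limit argument sketched above should dispatch it, but verifying that the limiting touching halfspace in fact belongs to $\half(\Uamu)$ (and not merely to some enlarged region) will require a careful compactness step of the kind already deployed in the proof of Lemma~\ref{lemma:basic lemma}.
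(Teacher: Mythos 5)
Your proposal is correct and follows essentially the same route as the paper: both parts of \eqref{U equality} are obtained from Lemmas~\ref{lemma:basic condition} and~\ref{lemma:basic lemma} applied to $S=\Uamu$ together with the shifted-halfspace argument for the reverse inclusion, and the openness claim uses the touching property of $H(x,F)$ combined with local smoothness (the paper phrases this as $F\cap\Uamu=\emptyset$ for every proper face, while you derive the contradiction directly at the point $x$ via $\D(x;\mu)\leq\mu(H(x,F))\leq\alpha$ — the two finishes are equivalent). Your explicit treatment of the unbounded regime $\alpha\leq 0$, where the boundedness hypothesis of Lemma~\ref{lemma:basic lemma} is not automatic, is a point of care that the paper's own proof silently skips.
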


Comparison of Theorem~\ref{theorem:ray basis additional} and part~\ref{contiguous support case} of Theorem~\ref{theorem:ray basis depth regions} draws connections between the depth regions $\Damu$ and $\Uamu$ --- for $\mu$ with contiguous support at $\Damu$, we have $\intr{\Damu} = \intr{\Uamu}$. We postpone this discussion into Section~\ref{section:level sets}, where connections between upper level sets of the depth and related convex constructions are explored thoroughly. In that section, also further applications of these observations are found.

If $\mu$ is not smooth, the last statement of Theorem~\ref{theorem:ray basis additional} certainly cannot be claimed. This was observed already in \cite[Lemma~6]{Struyf_Rousseeuw1999} where it was noted that for $\mu$ an atomic measure on points with unit weights in general position, $\Uamu$ equals $\D_{\alpha+1}(\mu)$ for any $\alpha \in \R$, and as such, $\Uamu$ is always a closed set. Our first consequence of the general ray basis theorem is the following observation concerning the dimensionality of the median set. It presents a refinement of \cite[Proposition~3.4]{Small1987}. 

\begin{corollary}	\label{corollary:dimension of median}
Let $\mu \in \Meas$. 
	\begin{enumerate}[label=(\roman*), ref=(\roman*)]
	\item \label{dimension i} If $\mu$ has contiguous support at $\median$, then $\dim\left(\median\right) \ne d$. 
	\item \label{dimension ii} If $\mu$ is smooth at $\median$ and $d>1$, then $\dim\left(\median\right) \ne d-1$.
	\end{enumerate}
\end{corollary}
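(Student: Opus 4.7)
Both claims will be proved by contradiction. In each part I pick $x_0$ in the (relative) interior of $\median$, use Lemma~\ref{lemma:minimizing halfspace} to produce a generalized minimizing halfspace $H_0 \in \half(x_0)$ with $\mu(\intr{H_0}) \leq \alpha^*(\mu)$, and confront it with a direction-matched ``extremal'' touching halfspace of $\median$ produced from the general inverse ray basis theorem. Write $S := \median$ and $\alpha^* := \alpha^*(\mu)$.

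\textbf{Part (i).} Assume $\dim S = d$, fix $x_0 \in \intr{S}$, and write the $H_0$ from Lemma~\ref{lemma:minimizing halfspace} as $\{w \in \R^d : \langle w, u_0\rangle \geq c_0\}$ with $c_0 = \langle x_0, u_0\rangle$. Set $c^* := \max\{\langle z, u_0\rangle : z \in S\}$; compactness of $S$ together with $\dim S = d$ and $x_0 \in \intr{S}$ give $c^* > c_0$, so the touching halfspace $\tilde H := \{w : \langle w, u_0\rangle \geq c^*\} \in \half(S)$ sits strictly inside $\intr{H_0}$. The upper bound $\mu(\tilde H) \leq \mu(\intr{H_0}) \leq \alpha^*$, combined with the depth lower bound $\mu(\tilde H) \geq D(z^*;\mu) = \alpha^*$ for a maximizer $z^* \in \tilde H \cap S$, pins $\mu(\tilde H) = \alpha^*$. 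For $0 < \delta < c^* - c_0$ the slightly larger $H^\delta := \{w : \langle w, u_0\rangle \geq c^* - \delta\} \supsetneq \tilde H$ still satisfies $H^\delta \subset \intr{H_0}$, whence $\mu(H^\delta) \leq \alpha^*$; however, the segment $[x_0, z^*] \subset S$ (convexity) has interior in $\intr{S}$ at every level in $(c_0, c^*)$, so $\bd{H^\delta} \cap \intr{S} \neq \emptyset$, and the contiguous support of $\mu$ at $S$ applied to the pair $\tilde H \subsetneq H^\delta$ forces $\mu(H^\delta) > \mu(\tilde H) = \alpha^*$---a contradiction.

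\textbf{Part (ii).} Assume $\dim S = d - 1$, so $S \subset \ell$ for some hyperplane $\ell$ and $\relint{S} \neq \emptyset$; write $H_+, H_-$ for the two closed halfspaces bounded by $\ell$. Both touch $S$, so smoothness at $S$ yields $\mu(\ell) = 0$. Theorem~\ref{theorem:ray basis depth regions}\ref{smooth case} covers $\R^d$ by touching halfspaces of $\mu$-mass $\alpha^*$; for $y \in \relint{S}$, any touching halfspace of $S$ containing $y$ must have boundary $\ell$ (the only supporting hyperplane of $S$ through a relative-interior point of $S$ is $\ell$ itself), so one of $\mu(H_\pm)$ equals $\alpha^*$. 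A convergence argument for $z \in \intr{H_\mp}$ tending to $\ell$---whose Case II covering halfspaces $H_z \in \half(S)$ have boundaries tending to $\ell$, and hence, by $\mu(\ell) = 0$ and dominated convergence, satisfy $\alpha^* = \mu(H_z) \to \mu(H_\mp)$---promotes this to the full identity $\mu(H_+) = \mu(H_-) = \alpha^*$, so $\mu(\R^d) = 2\alpha^*$. To close the contradiction I re-run the part (i) argument at some $y \in \relint{S}$ with a \emph{tilted} generalized minimizing halfspace $H_0$ ($\bd{H_0} \neq \ell$): smoothness gives $\mu(\bd{\tilde H_{u_0}}) = 0$ for the direction-$u_0$ extremal touching halfspace of $S$, so the open slab between $\bd{H_0}$ and $\bd{\tilde H_{u_0}}$ has zero $\mu$-mass, and combining this slab with $\mu(\R^d) = 2\alpha^*$ is intended to locate a point $z^* \in S \setminus \ell$, contradicting $S \subset \ell$.

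\textbf{Main obstacle.} The strict inequality supplied by contiguous support drives the contradiction in part (i) directly, whereas smoothness alone does not yield such a strict inequality. The most delicate step in part (ii) is therefore guaranteeing the existence of a tilted generalized minimizing halfspace at some $y \in \relint{S}$: should every such $H_0$ coincide with $H_+$ or $H_-$, a separate sub-argument exploiting the face structure of $S$ inside $\ell$ and the convergence of Case II touching halfspaces of $S$ would be required to produce the sought-after point outside $\ell$.
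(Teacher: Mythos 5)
Your part (i) is correct and is essentially the paper's own argument: a generalized minimizing halfspace $H_0$ at an interior point of $\median$, its parallel shift to a touching halfspace of $\median$, and the strict inequality supplied by contiguous support.

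Part (ii), however, has a genuine gap at its final step. Up to the identity $\mu(H_+)=\mu(H_-)=\alpha^*(\mu)$, hence $\mu(\R^d)=2\alpha^*(\mu)$ (using $\mu(\ell)=0$ from smoothness at $\median$), you are on the paper's track --- though the ``convergence/dominated convergence'' detour is unnecessary: for any $z\notin\ell$ one has $\median\in\mathfrak{F}(z,\median)$, and Theorem~\ref{theorem:ray basis depth regions} applied with $F=\median$ directly yields a touching halfspace containing $z$ whose boundary contains $\aff{\median}=\ell$; that halfspace is $H_+$ or $H_-$ according to the side of $\ell$ on which $z$ lies, and smoothness pins its mass to $\alpha^*(\mu)$. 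The problem is what comes after. The equality $\mu(\R^d)=2\alpha^*(\mu)$ is not by itself a contradiction: it says precisely that $\mu$ is halfspace symmetric about every point of $\median$. Your proposed closing move --- a ``tilted'' generalized minimizing halfspace $H_0$ at $y\in\relint{\median}$ together with the zero-$\mu$-mass slab between $\bd{H_0}$ and the extremal touching hyperplane in direction $u_0$ --- does not produce a contradiction: the maximizer $z^*$ of $\left\langle \cdot,u_0\right\rangle$ over $\median$ lies in $\median\subset\ell$ by hypothesis, and a slab of zero mass is entirely consistent with $\median\subset\ell$; without contiguous support nothing forces any mass, let alone a median point, into that slab. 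You concede as much in your ``main obstacle'' paragraph, along with the fact that a tilted $H_0$ need not even exist. So the contradiction is never actually derived.

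The missing ingredient in the paper's proof is external: since every point of the $(d-1)$-dimensional set $\median$ has depth $\mu(\R^d)/2$, the measure is halfspace symmetric, and \cite[Theorem~2.1]{Zuo_Serfling2000c} states that the center of halfspace symmetry is unique unless $\mu$ is concentrated on an infinite line. Uniqueness contradicts $\dim(\median)=d-1\geq 1$; concentration on a line $L$ forces $\median\subseteq L$, and then a touching halfspace of $\median$ whose boundary contains $L$ carries positive boundary mass, contradicting smoothness at $\median$. Some substitute for this dichotomy (or a direct proof of it) is required to complete your part (ii).
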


Without the assumptions of smoothness and contiguous support, the median set may be of any dimension. Consider, for instance $\mu\in\Meas[\R^2]$ that gives mass $1$ to each of the points $\left(-1,-1\right)\tr$, $\left(-1,1\right)\tr$, $\left(1,1\right)\tr$, and mass $2$ to $\left(1,-1\right)\tr$. It is easy to see that $\alpha^*(\mu) = 2$, and $\median = L\left[\left(0,0\right)\tr, \left(1,-1\right)\tr \right]$. For additional discussion on the dimensionality of the median set for empirical measures we refer to \cite{Liu_etal2020}.

We are now ready to reformulate the inverse ray basis theorem from part~\ref{inverse ray basis} of Theorem~\ref{theorem:standard ray basis}, under minimal assumptions.

\begin{corollary}[\textbf{Inverse ray basis theorem for the median}]	\label{corollary:ray basis}
Suppose that $\mu \in \Meas$ is smooth at $\median$. Then
	\[	\left(\intr{\median}\right)\compl = \bigcup\left\{ H \in \half(\median) \colon \mu(H) = \alpha^*(\mu) \right\}.	\]
If, in addition, $\mu$ has contiguous support at $\median$, then the covering condition \eqref{ray basis} holds true for any $x\in\median$. 
\end{corollary}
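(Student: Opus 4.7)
The first claim is immediate from Theorem~\ref{theorem:ray basis depth regions}\ref{smooth case} applied with $\alpha = \alpha^*(\mu)$, noting that $\D_{\alpha^*(\mu)}(\mu) = \median$ and $\mu$ is smooth at $\median$ by hypothesis.

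For the second claim, the plan is to first invoke Corollary~\ref{corollary:dimension of median}\ref{dimension i}, whose contiguous-support hypothesis is now available, to conclude $\dim \median < d$ and hence $\intr \median = \emptyset$. The already-proved first claim then reads $\R^d = \bigcup\{H \in \half(\median) \colon \mu(H) = \alpha^*(\mu)\}$. It remains, for a fixed $x \in \median$ and arbitrary $y \in \R^d$, to produce such an $H$ that additionally contains $\median$ in its boundary (equivalently, $H \in \half(\median, \median)$), since then $x \in \bd H$ is automatic and $H \in \half(x)$ as required.

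The approach is to apply Theorem~\ref{theorem:ray basis depth regions}\ref{smooth case} with the \emph{maximal} face $F = \median$. If $y \notin \aff \median$, then $\median \in \mathfrak{F}(y, \median)$: the set $\conv(\median \cup \{y\})$ is a pyramid of dimension $\dim \median + 1$, and any point of its relative interior has the form $\lambda y + (1-\lambda) m$ with $\lambda \in (0,1)$ and $m \in \relint \median$, which cannot belong to $\aff \median$ since $y \notin \aff \median$. The theorem then directly delivers $H \in \half(\median, \median)$ with $y \in H$ and $\mu(H) = \alpha^*(\mu)$. If instead $y \in \aff \median$ (covering both $y \in \median$ and $y \in \aff \median \setminus \median$, where $F = \median$ need not be visible from $y$), I apply the previous step to any proxy $z \notin \aff \median$; the resulting $H \in \half(\median, \median)$ satisfies $\aff \median \subseteq \bd H$ as $\bd H$ is an affine hyperplane containing $\median$, hence $y \in \bd H \subseteq H$ is automatic. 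The main technical subtlety is this dichotomy: without the proxy trick, the degenerate case $y \in \aff \median$ would require a separate limiting argument.
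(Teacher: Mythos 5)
Your proposal is correct and follows essentially the same route as the paper's proof: the first claim via Theorem~\ref{theorem:ray basis depth regions}\ref{smooth case} with $\alpha=\alpha^*(\mu)$, then Corollary~\ref{corollary:dimension of median} to get $\intr{\median}=\emptyset$, then an application of the theorem with the maximal face $F=\median\in\mathfrak F(y,\median)$ for $y\notin\aff{\median}$, handling points of $\aff{\median}$ through the observation that $H\in\half(\median,\median)$ forces $\aff{\median}\subseteq\bd{H}$. The only cosmetic difference is that you phrase the degenerate case as a ``proxy point'' argument, whereas the paper notes directly that the halfspaces already obtained cover $\aff{\median}$; these are the same observation.
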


The assumptions of Corollary~\ref{corollary:ray basis} are weaker than those in Theorem~\ref{theorem:standard ray basis}: \begin{enumerate*}[label=(\roman*)] \item instead of the existence of the density $f$ of $\mu$ we require only local smoothness of $\mu$, and \item instead of the strict positivity and continuity of $f$ in a neighbourhood of the median we need a weaker condition of locally contiguous support at the median set. \end{enumerate*} 

\begin{remark}[Uniqueness of the median]
In Corollary~\ref{corollary:dimension of median} we show that under the assumptions of both contiguous support and smoothness of $\mu$ at $\median$, the median set cannot be of dimension $d$, or $d-1$. In particular, for $d = 1$ and $2$ it follows that the median must be unique. It is tempting to claim that for a smooth measure $\mu$ with convex support (part~\ref{inverse ray basis} of Theorem~\ref{theorem:standard ray basis}), the median set must consist of a single point, compare with \cite[Proposition~7]{Mizera_Volauf2002} and the discussion in \cite[Section~3]{Small1987}. Surprisingly, it turns out that there exist probability distributions with a density that is smooth and positive in a convex set in dimension $d > 2$ that fail to possess a unique halfspace median. The appropriate conditions for the uniqueness of the halfspace median in higher dimensions turn out to be not only smoothness and contiguous support, but, quite surprisingly, also a certain integrability assumption. For a detailed discussion on the problem of the uniqueness of the halfspace median we refer to \cite{Pokorny_etal2021}. For that reason, one has to be careful when interpreting the inverse ray basis theorem in Corollary~\ref{corollary:ray basis}. Suppose that a measure $\mu$ is smooth with contiguous support. Then for any point $x \in \median$ it is true that the sample space $\R^d$ is covered by minimizing halfspaces of $\mu$ at $x$. That, however, does not mean that the median of $\mu$ must be unique. Especially in higher dimension, the non-trivial median set may still lie in the boundary of all the halfspaces from the covering system.
\end{remark}

We continue by giving examples that demonstrate that the assumptions of Corollary~\ref{corollary:ray basis} are difficult to be weakened. In our first example we show that without local smoothness, even in the case when $x$ is the unique median of $\mu$, it may fail to satisfy the covering condition \eqref{ray basis}.

\begin{example}	\label{example:unique median is not enough}
Consider $\mu\in\Meas[\R^2]$ whose support is plotted in the right hand panel of Figure~\ref{figure:proof}. It is given as a mixture of the uniform distributions on line segments $L_u=L\left[\left(0,0\right)\tr, \left(0,1\right)\tr \right]$, $L_d=L\left[\left(0,0\right)\tr, \left(0,-1\right)\tr \right]$,  $L_l=L\left[\left(0,0\right)\tr, \left(-1,0\right)\tr \right]$ and $L_r=L\left[\left(0,0\right)\tr, \left(1,0\right)\tr \right]$, such that $\mu(L_l)=\mu(L_r)=\mu(L_d)=1$ and $\mu(L_u)=2$. The origin $o = \left(0,0\right)\tr$ is the unique median of $\mu$ with $\D(o;\mu)=2$, since for any other point $x\in \R^2$ there is a halfspace $H\in\half(x)$ that is parallel with one of the axes such that $\mu(H)<2$. At the same time, each closed halfspace containing $L_u$ has $\mu$-mass at least $3$, implying that it is impossible to cover (any point from) $L_u$ by $H\in\half(o)$ such that $\mu(H) \leq \alpha^*(\mu) = 2$. On the other hand, observe that part~\ref{contiguous support case} of Theorem~\ref{theorem:ray basis depth regions} is valid, as we easily find open halfspaces whose boundary passes through the origin with mass at most $\alpha^*(\mu)$ whose closures cover $\R^2$.
\end{example}

It is known that for a uniform distribution $\mu$ on a triangle $\Delta$ in the plane, the barycentre $o\in\Delta$ of $\Delta$ is the unique median of $\mu$ \cite[Section~5.3]{Rousseeuw_Ruts1999}. We construct an example of a uniform distribution on the set obtained by removing a narrow strip containing $o$ from $\Delta$. We show that our measure does not satisfy the assumption of contiguous support at $\median$, at the same time its median set is full-dimensional, and contains points that fail to cover the plane by their minimizing halfspaces as in~\eqref{ray basis}. 

\begin{example}	\label{example:triangle gap}
Consider the equilateral triangle in $\R^2$ centred at the origin $o = \left(0,0\right)\tr$ determined by points $a=(0,2)\tr$, $b=(-\sqrt{3},-1)\tr$ and $c=(\sqrt{3},-1)\tr$ as displayed in Figure~\ref{figure:triangle gap}. For $x \in (0,2)$ and $y \in (0,1)$ denote $x_c = \left(0,x\right)\tr$ and $y_c = \left(0,-y\right)\tr$, and let $x_l$ and $x_r$ be the points where the horizontal line containing $x_c$ intersects $L(a,b)$ and $L(a,c)$, respectively. Analogously, define points $y_l$ and $y_r$ for the horizontal line that contains $y_c$. Let $\mu \in \Meas[\R^2]$ be the uniform distribution on the set $S = \conv{\{a, x_l, x_r\}} \cup \conv{ \{ y_l, b, c, y_r \}}$ with total mass being the area of $S$. It is possible to choose $x$ and $y$ positive, but small enough so that the set $\median$ is full-dimensional, while $y_c\in\median$ is the only point in $\R^2$ that satisfies the covering property \eqref{ray basis}. For a detailed technical proof of this claim, we refer to Section~\ref{section:triangle gap} in the Appendix. 

\begin{figure}[htpb]
\includegraphics[width=\twofig\textwidth]{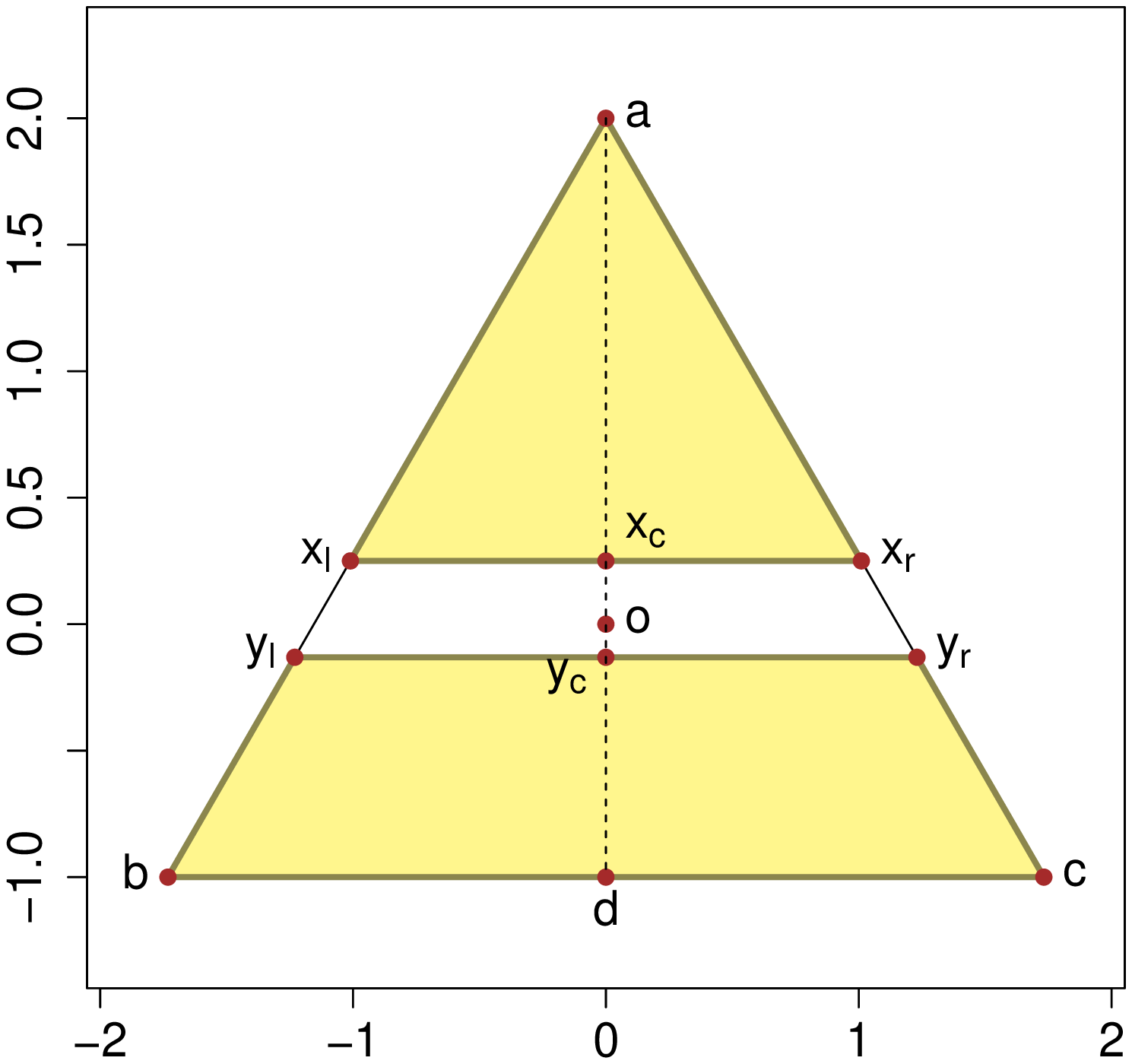}
\includegraphics[width=\twofig\textwidth]{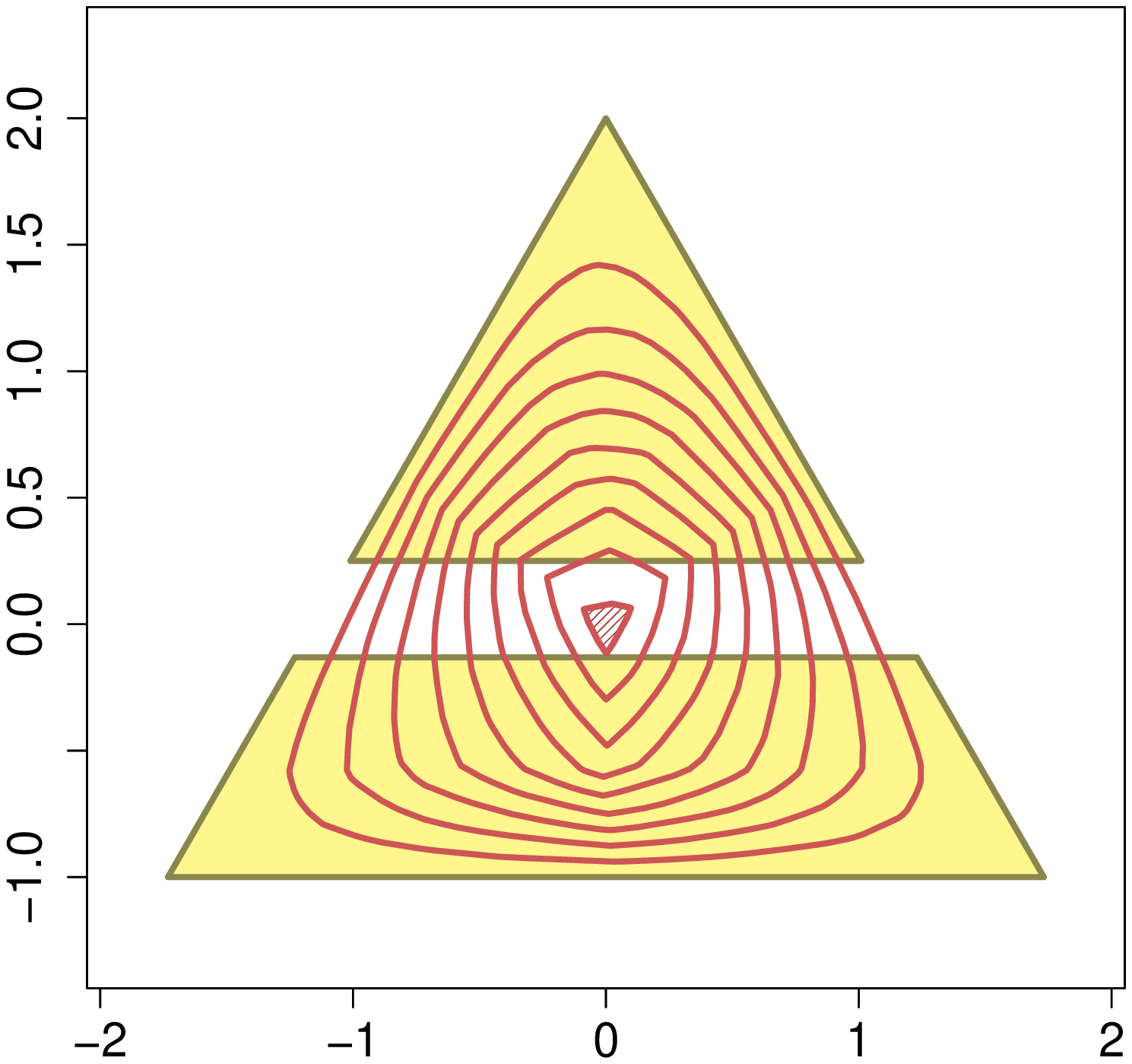}
\caption{Example~\ref{example:triangle gap}: For the inverse ray basis theorem, it is not enough to assume only the smoothness of $\mu$, without the property of contiguous support. For the uniform measure $\mu$ on the coloured region, the median is a full-dimensional set, yet the point $y_c$ displayed in the left hand panel is the only point in $\R^2$ that satisfies the covering condition \eqref{ray basis}. On the right hand panel we see several numerically computed depth regions $\Damu$, with the median set $\median$ being the smallest region, located in the closed strip removed from the triangle.}
\label{figure:triangle gap}
\end{figure}
\end{example}

%
%

\section{Depth regions and floating body}	\label{section:level sets}

Formula \eqref{intersection of closed halfspaces} allows us to write any depth region $\Damu$ as an intersection of closed halfspaces whose complements have $\mu$-mass smaller than $\alpha$. Another important affine equivariant set is the \emph{floating body} of $\mu$ corresponding to $\alpha \in \R$
	\[	\UaFBmu = \bigcap \left\{H \in \half \colon \mu(H\compl)\leq \alpha\right\},	\]
as defined in \cite[Section~5]{Bobkov2010}. According to the discussion in \cite{Nagy_etal2019}, the floating bodies are of great interest in both geometry and probability theory. For the special case of $\mu$ a uniform distribution on a full-dimensional convex set $K \subset \R^3$, the floating body has a compelling mechanical interpretation --- the set $\UaFBmu$ can be described as the part of the convex solid $K$ of (volumetric mass) density $\alpha \in (0,\alpha^*(\mu))$ that never submerges beneath the surface of water of unit density, when fully rotated on the surface. The history of the research on floating bodies goes well into the 19th century. In statistics, a construction equivalent to the floating body of a measure is much more recent, and sometimes referred to as the multivariate trimming \cite{Nolan1992,Masse_Theodorescu1994}. Being intersections of closed sets, both $\Damu$ and $\UaFBmu$ are closed. In what follows we use the results of Section~\ref{section:ray basis}, and precise the connections between the depth regions $\Damu$, $\Uamu$, the floating body $\UaFBmu$, and a further set that turns out to be of interest in our analysis
	\[	\Ucamu = \bigcap \left\{\intr{H}\colon H\in\half,\ \mu(H\compl)\leq \alpha\right\}.	\]
We demonstrate that this last region is an upper level set of a function closely related to the halfspace depth
	\[	\D^\circ(x;\mu) = \inf_{x\in H}\mu(\intr{H}),	\]
considered in, e.g., \cite[Lemma~1]{Mizera_Volauf2002}. Note also that $(\Ucamu)\compl=\bigcup\{H\in \half \colon \mu(\intr{H})\leq \alpha\}$ already appeared in Theorem~\ref{theorem:ray basis additional}, meaning that $\Ucamu=\intr{\Uamu}$ must be an open set. The following theorem comprehensively covers the inter-relations between all these affine constructions, and generalizes several results that can be found scattered in the relevant literature on multivariate trimming concepts \cite{Small1987, Masse_Theodorescu1994, Mizera_Volauf2002, Brunel2019, Nagy_etal2019}.

\begin{theorem} \label{theorem:intersection}
For $\mu\in\Meas$ and $\alpha\in \R$ 
	\begin{equation}	\label{inclusions}
	\begin{aligned}
	\Ucamu = \intr{\Uamu} \subseteq \cl{\Uamu} \subseteq \UaFBmu \subseteq  \Damu.
	\end{aligned}
	\end{equation}
Additionally,
	\begin{enumerate}[label=(\roman*), ref=(\roman*)]
	\item \label{inclusions null}  $\Ucamu = \left\{x\in\R^d \colon \D^\circ(x;\mu) > \alpha\right\}$. 
	\item \label{inclusions first} if $\intr{\Uamu}\neq \emptyset$, then $\cl{\Uamu} = \UaFBmu$. 
	\item \label{inclusions second} if $\mu$ has contiguous support at $\Damu$, then $\intr{\Damu}=\intr{\Uamu}$.
	\item \label{inclusions floating body} if for each $H^\prime \in \half(\Damu)$ we have that $H^\prime \subset H \in \half$ implies $\mu(H^\prime) < \mu(H)$, then $\UaFBmu = \Damu$.
	\item \label{strict monotonicity} if $\intr{\Uamu}\neq \emptyset$ and $\mu$ has contiguous support at $\Damu$, then $\cl{\Uamu} = \Damu$.
	\end{enumerate}
\end{theorem}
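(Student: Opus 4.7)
The plan is to establish the chain of inclusions first and then derive parts~\ref{inclusions null} through~\ref{inclusions floating body} in the order (i), (iii), (ii), (v), (iv). For the chain, the identity $\Ucamu=\intr{\Uamu}$ falls out of Theorem~\ref{theorem:ray basis additional}: the bijection $H\mapsto K=\cl{H\compl}$ on $\half$ satisfies $\mu(H\compl)=\mu(\intr{K})$, so taking complements in the defining intersection of $\Ucamu$ produces $\bigcup\{K\in\half\colon\mu(\intr{K})\leq\alpha\}$, which Theorem~\ref{theorem:ray basis additional} identifies as $(\intr{\Uamu})\compl$. For $\cl{\Uamu}\subseteq\UaFBmu$ I would show that if some $x\in\Uamu$ failed to lie in an $H$ with $\mu(H\compl)\leq\alpha$, then the halfspace $\tilde{H}\in\half(x)$ parallel to $H$ and contained in $H\compl$ would force $\D(x;\mu)\leq\mu(\tilde{H})\leq\alpha$; closedness of $\UaFBmu$ passes the inclusion to the closure. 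The last step $\UaFBmu\subseteq\Damu$ is immediate from~\eqref{intersection of closed halfspaces}.

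For part~\ref{inclusions null}, the bijection above shows that $x\in\Ucamu$ iff every $K\in\half(x)$ satisfies $\mu(\intr{K})>\alpha$. To upgrade this to $\D^\circ(x;\mu)>\alpha$ I would argue that the infimum defining $\D^\circ$ is attained: compactness of $\Sph$ yields a convergent subsequence of directions $u_n\to u$ along a minimizing sequence $H_{x,u_n}$, and a Fatou bound $\mu(\intr{H_{x,u}})\leq\liminf\mu(\intr{H_{x,u_n}})$ on the pointwise-minorizing indicators shows that $H_{x,u}$ witnesses the infimum. For~\ref{inclusions second}, assume $x\in\intr{\Damu}$ but $\D(x;\mu)=\alpha$; Lemma~\ref{lemma:minimizing halfspace} provides $H\in\half(x)$ with $\mu(\intr{H})\leq\alpha$, and Theorem~\ref{theorem:ray basis depth regions}~\ref{contiguous support case} then places $H$ in the cover $(\intr{\Damu})\compl$, contradicting $x\in\intr{\Damu}\cap H$; openness upgrades $\intr{\Damu}\subseteq\Uamu$ to $\intr{\Damu}\subseteq\intr{\Uamu}$. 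For~\ref{inclusions first}, pick $x_0\in\intr{\Uamu}=\Ucamu$ and any $x\in\UaFBmu$; for each $H$ with $\mu(H\compl)\leq\alpha$ one has $x_0\in\intr{H}$ and $x\in H$, so the standard convex-geometric fact that $(1-t)z_0+tz\in\intr{A}$ whenever $A$ is convex, $z_0\in\intr{A}$, $z\in\cl{A}$, and $t\in[0,1)$ places $(1-t)x_0+tx$ in $\intr{H}$; intersecting over such $H$ confines this segment to $\Ucamu$, so $x\in\cl{\Uamu}$. Part~\ref{strict monotonicity} then combines the two: $\Damu$ is closed convex with $\intr{\Damu}=\intr{\Uamu}\neq\emptyset$, hence $\Damu=\cl{\intr{\Damu}}=\cl{\intr{\Uamu}}=\cl{\Uamu}$.

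Part~\ref{inclusions floating body} is the main obstacle, since it only postulates strict monotonicity of $\mu$ on touching halfspaces of $\Damu$ and nothing about smoothness. Only $\Damu\subseteq\UaFBmu$ needs proof; suppose some $H\in\half$ with $\mu(H\compl)\leq\alpha$ omits a point $z^*\in\Damu$, and write $H=\{z\colon\langle z,u\rangle\geq t_0\}$ so that $\langle z^*,u\rangle<t_0$. For $\alpha>0$ (the case $\alpha\leq 0$ is trivial since $\Damu=\R^d$), $\Damu$ is compact, so $t^*=\inf_{z\in\Damu}\langle z,u\rangle<t_0$ is attained at some $z'\in\Damu$. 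The crucial choice is to use the touching halfspace with the \emph{opposite} normal, $\tilde{H}^*=\{z\colon\langle z,u\rangle\leq t^*\}\in\half(\Damu)$: because $z'\in\bd{\tilde{H}^*}\cap\Damu$ and $\tilde{H}^*\in\half(z')$, one has $\mu(\tilde{H}^*)\geq\D(z';\mu)\geq\alpha$. The strict-monotonicity hypothesis applied to $\tilde{H}^*\subsetneq\tilde{H}^*_\delta:=\{z\colon\langle z,u\rangle\leq t_0-\delta\}$ for $\delta\in(0,t_0-t^*)$ yields $\mu(\tilde{H}^*_\delta)>\mu(\tilde{H}^*)\geq\alpha$; but $\tilde{H}^*_\delta\subseteq H\compl$ bounds $\mu(\tilde{H}^*_\delta)\leq\mu(H\compl)\leq\alpha$, a contradiction. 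The delicate point, and the one I expect to be the main obstacle, is this sign choice: enlarging the opposite-normal $\tilde{H}^*$ (rather than the same-normal halfspace $\{z\colon\langle z,u\rangle\geq t^*\}$, which is not touching) is what keeps the enlargement trapped inside $H\compl$ and lets the hypothesis bite.
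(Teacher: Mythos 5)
Your proposal is correct, and on two of the five parts it takes a genuinely different route from the paper. The chain of inclusions, part~\ref{inclusions null}, part~\ref{inclusions second} and part~\ref{strict monotonicity} essentially coincide with the paper's arguments: $\Ucamu=\intr{\Uamu}$ by complementation against \eqref{U equality} of Theorem~\ref{theorem:ray basis additional}; the parallel-halfspace contradiction for $\cl{\Uamu}\subseteq\UaFBmu$; attainment of the infimum defining $\D^\circ$ via compactness of $\Sph$ and Fatou; and $\Damu=\cl{\intr{\Damu}}=\cl{\intr{\Uamu}}=\cl{\Uamu}$ for the last part. For part~\ref{inclusions first} the paper argues by contradiction, extracting a point $x\in\intr{\UaFBmu}\setminus\cl{\Uamu}$ and using Lemma~\ref{lemma:minimizing halfspace} to build a halfspace $H_x$ with $\mu(H_x\compl)\leq\alpha$ and $\UaFBmu\subseteq H_x$, contradicting $x\in\intr{\UaFBmu}$; your segment argument --- joining $x_0\in\Ucamu$ to $x\in\UaFBmu$ and observing that the half-open segment stays inside every $\intr{H}$ of the defining family, hence inside $\Uamu$ --- is more direct and equally valid. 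The substantive divergence is part~\ref{inclusions floating body}: the paper first verifies the identity on $\R$ via $F(t)=\mu([t,\infty))$ and then lifts it to $\R^d$ through the projection property \cite{Dyckerhoff2004}, whereas you give a self-contained geometric proof. Your sign choice is exactly the right one: with $t^*=\inf_{z\in\Damu}\langle z,u\rangle$ attained at $z'$ (compactness of $\Damu$ for $\alpha>0$), the opposite-normal halfspace $\tilde H^*$ is touching, satisfies $\mu(\tilde H^*)\geq\D(z';\mu)\geq\alpha$ since $z'\in\bd{\tilde H^*}\cap\Damu$, and its enlargement $\tilde H^*_\delta$ remains inside $H\compl$, so the hypothesis yields $\alpha\leq\mu(\tilde H^*)<\mu(\tilde H^*_\delta)\leq\mu(H\compl)\leq\alpha$. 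This buys independence from the external projection result and makes visible exactly where the monotonicity hypothesis bites.

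One shared caveat: at $\alpha=0$ one has $\Damu=\R^d$, so $\half(\Damu)=\emptyset$ and the hypothesis of part~\ref{inclusions floating body} is vacuous, yet the conclusion can fail (for a Dirac mass, $U_{0}^{FB}(\mu)$ is a single point while $D_0(\mu)=\R^d$). So your dismissal of $\alpha\leq0$ as ``trivial'' is not quite accurate for $\alpha=0$ --- but the statement itself is silently restricted to $\alpha>0$ there, so this is a defect of the theorem's formulation rather than a gap in your argument, which is sound precisely on the range where the claim holds.
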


An application of Theorem~\ref{theorem:intersection} to the estimation of the depth regions $\Damu$ from datasets is given in Section~\ref{section:Dyckerhoff}. The condition of non-empty interior of $\Uamu$ that figures in parts~\ref{inclusions first} and~\ref{strict monotonicity} of the previous theorem is not restrictive, as shown in the next lemma. 

\begin{lemma}	\label{lemma:strict monotonicity condition}
If for $\mu\in\Meas$ and $\alpha \in \R$ there exists a point $x\in\Uamu$ such that $\mu$ is smooth at $x$, then $\intr{\Uamu}\neq \emptyset$.
\end{lemma}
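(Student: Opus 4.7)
The plan is to prove the stronger statement that the given point $x$ itself lies in $\intr{\Uamu}$, which of course implies $\intr{\Uamu} \neq \emptyset$. I would argue by contradiction, supposing $x \in \left(\intr{\Uamu}\right)\compl$.

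The first step is to invoke Lemma~\ref{lemma:basic condition} with $S = \Uamu$, which supplies a closed halfspace $H \in \half$ with $x \in H$ and $\mu(\intr{H}) \leq \alpha$. If $x$ happens to lie in $\intr{H}$, I would shift $H$ parallel to itself so that its boundary passes through $x$: writing $u \in \Sph$ for the inner unit normal of $H$, the shifted halfspace $H^\prime = H_{x,u}$ lies in $\half(x)$, satisfies $H^\prime \subseteq H$, and consequently $\mu(\intr{H^\prime}) \leq \mu(\intr{H}) \leq \alpha$. Otherwise I simply take $H^\prime = H \in \half(x)$.

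The decisive step uses that $\mu$ is smooth at $x$: since $H^\prime$ belongs to $\half(x)$, its boundary hyperplane passes through $x$, so $\mu(\bd{H^\prime}) = 0$ and therefore $\mu(H^\prime) = \mu(\intr{H^\prime}) \leq \alpha$. But then the definition of the halfspace depth forces $\D(x;\mu) \leq \mu(H^\prime) \leq \alpha$, contradicting $x \in \Uamu$.

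I do not foresee a genuine obstacle. The combinatorial/covering content is already packaged in Lemma~\ref{lemma:basic condition}; the remaining ingredients --- the elementary parallel shift of $H$ and the collapse $\mu(\intr{H^\prime}) = \mu(H^\prime)$ afforded by smoothness at a single point --- together bridge the small slack between the bound $\mu(\intr{H}) \leq \alpha$ coming from the lemma and the strict inequality $\D(x;\mu) > \alpha$ available from $x \in \Uamu$.
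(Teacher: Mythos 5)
Your argument is correct, but it reaches the conclusion by a somewhat different and in fact more economical route than the paper, and it proves a slightly stronger statement. The paper argues by assuming $\intr{\Uamu}=\emptyset$ and invoking Theorem~\ref{theorem:ray basis additional} with $F=\Uamu$ --- i.e.\ the full touching-halfspace machinery of Lemma~\ref{lemma:basic lemma} --- to produce a single $H\in\half$ with $\Uamu\subset\bd{H}$ and $\mu(\intr{H})\leq\alpha$; since $x\in H$ forces $\mu(H)\geq \D(x;\mu)>\alpha$, it concludes $\mu(\bd{H})>0$, contradicting smoothness at $x$. You instead use only the elementary covering statement of Lemma~\ref{lemma:basic condition} applied to $S=\Uamu$, translate the resulting halfspace so that its boundary passes through $x$ (note that when $x\in\Uamu\setminus\intr{\Uamu}$ the condition $\intr{H}\cap\Uamu=\emptyset$ from that lemma already forces $x\in\bd{H}$, so the shift is only needed in the vacuous case $x\notin\Uamu$), and then let smoothness at $x$ collapse $\mu(H^\prime)$ to $\mu(\intr{H^\prime})\leq\alpha$, contradicting $\D(x;\mu)>\alpha$. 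The two contradictions are essentially contrapositive readings of the same inequality chain, but your version avoids the heavier Theorem~\ref{theorem:ray basis additional} entirely and yields the sharper local conclusion that the smooth point $x$ itself belongs to $\intr{\Uamu}$ (a pointwise analogue of the openness statement in that theorem), rather than merely the non-emptiness of the interior.
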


Combining Lemma~\ref{lemma:strict monotonicity condition} and Theorems~\ref{theorem:ray basis additional} and~\ref{theorem:intersection} we get that for $\mu\in\Meas$ with a density and $\alpha \in (0,\alpha^*(\mu))$ the set $\Uamu$ is open and $\Uamu \subset \cl{\Uamu} = \UaFBmu$. If the support of $\mu$ is, in addition, contiguous, we can also write $\UaFBmu = \Damu$. In the simplest situation of $\mu\in\Meas$ with a density that is positive in the convex support of $\mu$, we can therefore write $\cl{\Uamu} = \UaFBmu = \Damu$, and the floating bodies completely coincide with the central regions of the depth. The last situation is common in the literature on floating bodies in geometry, where all the above definitions are used interchangeably. In the general setup of the depth and (probability) measures, it is however necessary to differentiate between them.

Our statement of Theorem~\ref{theorem:intersection} is strict --- none of the inclusions can be reversed, in general. Each strict upper level set $\Uamu$ of a smooth measure $\mu$ is open (Theorem~\ref{theorem:ray basis additional}), and thus strictly smaller than $\cl{\Uamu}$. It is easy to construct a measure without contiguous support that violates $\UaFBmu = \Damu$, see e.g. \cite[Figure~7]{Nagy_etal2019}. Even for measures with contiguous support, equality $\UaFBmu = \Damu$ from part~\ref{inclusions floating body} of Theorem~\ref{theorem:intersection} can fail; consider $\mu$ being the Dirac measure at the origin $o \in \R^d$, and $\alpha = 1$. In that case, $\UaFBmu = \emptyset$, while $\Damu = \{ o \}$. In the following example we show that in the case when $\intr{\Uamu}=\emptyset$, the halfspace depth may fail to satisfy $\cl{\Uamu} = \Damu$ (the so-called strict monotonicity property), even under the assumption of contiguous support. The example demonstrates that also the remaining set inclusions in Theorem~\ref{theorem:intersection} cannot be reversed for general measures without local assumptions.

\begin{example}	\label{example:strict monotonicity}
Consider $\mu \in \Meas[\R^2]$ given by a mixture of the uniform distribution on the unit disc $C = \left\{ x \in \R^2 \colon \left\Vert x \right\Vert \leq 1 \right\}$ with mass $1/4$, and two atoms located at $m = \left(0,1\right)\tr$ and $z = \left(0,1/2\right)\tr$ with masses $1/2$ and $1/4$, respectively. Point $m$ is certainly the unique halfspace median of $\mu$, with $\D\left(m;\mu\right) = \alpha^*(\mu) = 1/2$. It is also easy to see that the halfspace depth of $\mu$ at the point $o = \left(0,0\right)\tr$ equals $1/8$. Because of the two large atoms of $\mu$, all points in the open line segment $L(o,z)$ have also depth $1/8$. Similarly, all points in $L(z,m)$ have the same depth as $z$, that is $\D\left(z;\mu\right) = 1/8 + 1/4 = 3/8$. Because for any $y \notin L[o,m]$ certainly $\D\left(y;\mu\right) < 1/8$, we have for $\alpha = 1/8$ that $\Damu = L[o,m]$, while $\cl{\Uamu} = L[z,m]$, meaning that the strict monotonicity property of the halfspace depth is violated, see also the left hand panel of Figure~\ref{fig:circle}. In addition, it is easy to verify that $\UaFBmu$ is equal to $\Damu \supset \cl{\Uamu}$, while the set $\Ucamu$ from \eqref{inclusions} is empty. We conclude that, in general, one cannot write $\Uamu$ as a simple intersection of halfspaces, as possible for $\Damu$ in \eqref{intersection of closed halfspaces}.
\end{example}

\begin{figure}[htpb]
\includegraphics[width=\twofig\textwidth]{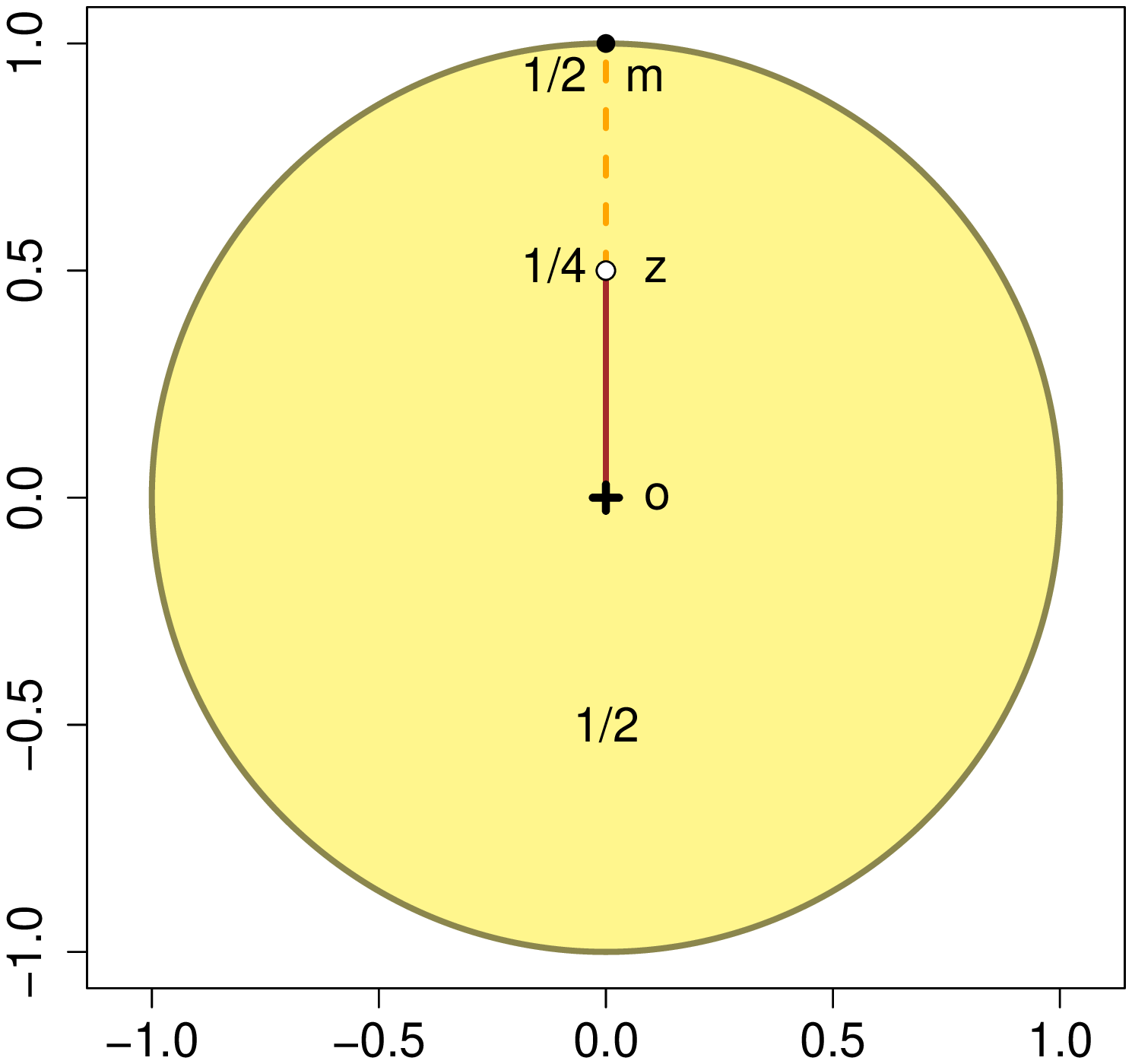} \quad
\includegraphics[width=\twofig\textwidth]{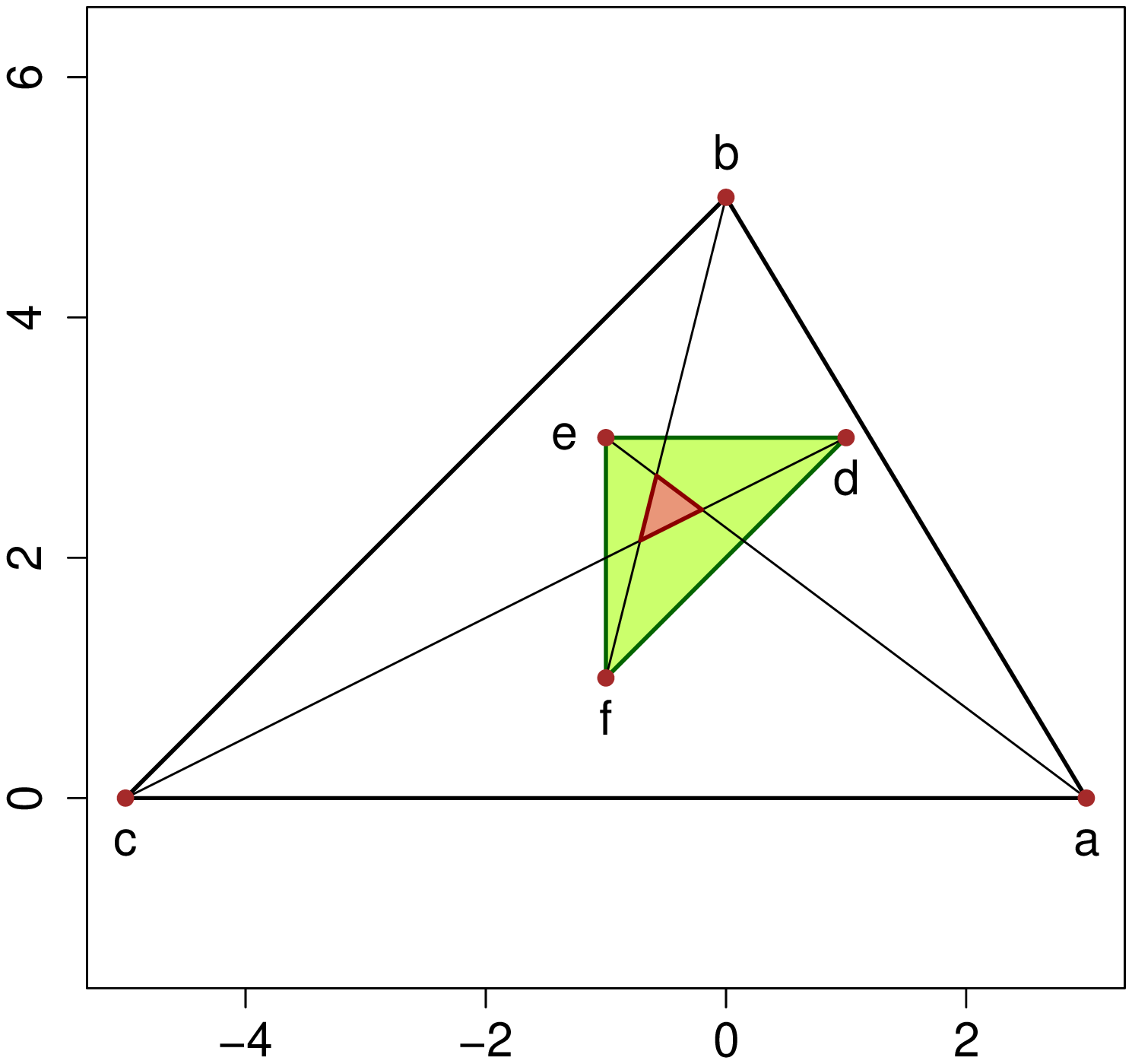}
\caption{Left panel: Condition $\intr{\Uamu}\neq \emptyset$ is needed for the strict monotonicity of the halfspace depth: Example~\ref{example:strict monotonicity} of a distribution $\mu$ where $\Damu \ne \cl{\Uamu}$. The halfspace depth of $\mu$ takes only values in the set $[0,1/8] \cup \{3/8, 1/2\}$. The upper level set $\Damu$ with $\alpha = 1/8$ is the line segment between the origin $o$ and the black atom $m$. Points on the thick dashed line all attain depth equal to $3/8$. 
Right panel: For the atomic measure $\mu$ supported in points $a$--$f$ from Example~\ref{example:double triangle}, the set $\coverM$ (innermost coloured triangle) is a proper subset of $\median$ (outer coloured triangle).
}
\label{fig:circle}
\end{figure}

\section{Applications: Refined medians, computation, and consistency}	\label{section:applications}

\subsection{Covering halfspace median}	\label{section:covering median}

Our first application of the ray basis theorems from Section~\ref{section:ray basis} is motivated by another refinement of the main equality \eqref{D equality} applied to $\alpha = \alpha^*(\mu)$: For any measure $\mu \in \Meas$ there exists a point $x\in \median$ that allows us to cover the whole space with halfspaces $H\in\half (x)$ whose interior has $\mu$-mass at most 
	\[	\gamma^*(\mu) = \inf\{\alpha>0\colon \intr{D_{\alpha}(\mu)}=\emptyset\} = \sup\{\alpha>0\colon \intr{D_{\alpha}(\mu)}\ne\emptyset\}.	\]
Observe that $\gamma^*(\mu) \leq \alpha^*(\mu)$. Therefore, the following claim is stronger than the inverse ray basis theorem not only \begin{enumerate*}[label=(\roman*)] \item because it asserts the existence of such a special point in $\median$, but also in the sense that \item the masses of the halfspaces that cover $\R^d$ are ensured to be at most $\gamma^*(\mu)$, and not just $\alpha^*(\mu)$. \end{enumerate*}

\begin{theorem} \label{theorem:covering point in median}
For any $\mu \in \Meas$ there exists a point $x\in \median$, such that
	\begin{equation} 	\label{eq:covering property}
	\R^d = \bigcup\left\{	H\in\half(x) \colon \mu(\intr{H}) \leq \gamma^*(\mu)	\right\}.
	\end{equation}
\end{theorem}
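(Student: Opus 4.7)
The strategy is to apply the general inverse ray basis Theorem~\ref{theorem:ray basis depth regions} to the depth regions $\D_\alpha(\mu)$ at levels $\alpha$ close to $\gamma^*(\mu)$ and extract the covering median via a compactness argument. The key observation is that for any $\alpha > \gamma^*(\mu)$, by the definition of $\gamma^*(\mu)$, the set $\D_\alpha(\mu)$ has empty interior and hence lies in some affine subspace $L_\alpha$ with $\dim L_\alpha < d$. For any $y \notin L_\alpha$, the whole face $\D_\alpha(\mu)$ belongs to the visibility family $\mathfrak{F}(y,\D_\alpha(\mu))$, since $\relint{\conv{\D_\alpha(\mu) \cup \{y\}}}$ lives in strictly higher affine dimension than $L_\alpha$ and is therefore disjoint from $\D_\alpha(\mu)$. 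Applying Theorem~\ref{theorem:ray basis depth regions} with $F = \D_\alpha(\mu)$ then supplies a halfspace $H \in \half(\D_\alpha(\mu), \D_\alpha(\mu))$ with $y \in H$, $\bd{H} \supseteq \D_\alpha(\mu)$, and $\mu(\intr{H}) \leq \alpha$; as $\median \subseteq \D_\alpha(\mu) \subseteq \bd{H}$, this $H$ automatically belongs to $\half(x)$ for every $x \in \median$.

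Pick $\alpha_n \downarrow \gamma^*(\mu)$ with $\alpha_n \in (\gamma^*(\mu), \alpha^*(\mu)]$; in the degenerate case $\gamma^*(\mu) = \alpha^*(\mu)$, approach $\gamma^*(\mu)$ instead from below with $\alpha_n < \gamma^*(\mu)$, so that the full-dimensional $\D_{\alpha_n}(\mu)$ shrink to $\median$ and the touching halfspaces limit to halfspaces touching $\median$. The affine hulls $L_n := \aff{\D_{\alpha_n}(\mu)}$ are nested and stabilize to $L := \aff{U_{\gamma^*(\mu)}(\mu)}$, a proper affine subspace of $\R^d$. For each $y \notin L$, the above construction produces halfspaces $H_n$; by compactness of $\Sph$ combined with the Fatou-type inequality $\mu(\intr{H}) \leq \liminf_n \mu(\intr{H_n})$ valid for convergent sequences $H_n \to H$ in $\half$, I extract a subsequential limit $H$ with $y \in H$, $\bd{H} \supseteq \median$ (each $m \in \median$ satisfies the boundary equation of all $H_n$ and hence of $H$), and $\mu(\intr{H}) \leq \gamma^*(\mu)$. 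Consequently every $y \notin L$ is covered by halfspaces through any $x \in \median$.

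The principal obstacle is to simultaneously handle the residual points $y \in L$. For such $y$, eventually $y \in L_n$, the face $\D_{\alpha_n}(\mu)$ is no longer visible from $y$ as a maximal face, and the approximating halfspaces carry only proper subfaces of $\D_{\alpha_n}(\mu)$ on their boundaries; the limit $\bd{H}$ need not contain all of $\median$, and different $y$'s may demand halfspaces through genuinely different portions of $\median$ (witness Example~\ref{example:triangle gap}, where only a single distinguished median point is covering). To extract a single $x \in \median$ serving all $y \in L$ at once, I would argue by induction on $\dim L$: the covering problem for $y \in L$ reduces to an analogous assertion in the lower-dimensional ambient $L$ for a suitably restricted depth structure of $\mu$, supplied by the inductive hypothesis. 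The delicate point is to set up this restriction so that the inductive median lies in $\median$ and the $L$-covering halfspaces extend to $\R^d$-halfspaces of $\mu$-interior mass at most $\gamma^*(\mu)$; alternatively, a compactness/finite-intersection argument on the closed subsets $M(y) = \{x \in \median : \exists H \in \half(x),\, y \in H,\, \mu(\intr{H}) \leq \gamma^*(\mu)\}$ of the compact set $\median$ offers a second possible route.
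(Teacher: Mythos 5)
Your first step is sound and close in spirit to the paper's argument for the degenerate case: when $\gamma^*(\mu)<\alpha^*(\mu)$, the set $U_{\gamma^*(\mu)}(\mu)$ is a non-empty convex set with empty interior containing $\median$, and applying the ray basis machinery with the \emph{whole set} as the visible face produces, for each $y\notin L=\aff{U_{\gamma^*(\mu)}(\mu)}$, a halfspace $H\ni y$ with $\mu(\intr{H})\leq\gamma^*(\mu)$ and $\bd{H}\supseteq L$. (The paper applies Theorem~\ref{theorem:ray basis additional} directly at level $\gamma^*(\mu)$ to $F=U_{\gamma^*(\mu)}(\mu)$, avoiding your limit over $\alpha_n\downarrow\gamma^*(\mu)$.) Note that what you call the ``principal obstacle'' --- the residual points $y\in L$ --- is not an obstacle at all here: in your own limiting construction $\bd{H_n}\supseteq \D_{\alpha_m}(\mu)$ for all $n\geq m$, so the limit satisfies $\bd{H}\supseteq U_{\gamma^*(\mu)}(\mu)$ and hence $\bd{H}\supseteq L$, and any single such halfspace already covers all of $L$. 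Neither the induction on $\dim L$ nor the finite-intersection argument is needed.

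The genuine gap is elsewhere, in the case you dismiss as ``degenerate'': $\gamma^*(\mu)=\alpha^*(\mu)$ with $\intr{\median}\neq\emptyset$. There $U_{\gamma^*(\mu)}(\mu)=\emptyset$, and your proposed fix --- approach from below so that ``the full-dimensional $\D_{\alpha_n}(\mu)$ shrink to $\median$ and the touching halfspaces limit to halfspaces touching $\median$'' --- does not prove the theorem. A touching halfspace of a full-dimensional convex body meets it only along a proper face, so the limits obtained for different $y$ touch $\median$ at genuinely different boundary points and share no common point of $\median$ in their boundaries; Examples~\ref{example:triangle gap} and~\ref{example:double triangle} show that most points of a full-dimensional median set indeed fail the covering property, so no soft compactness argument can succeed. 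The paper's resolution is a different idea entirely: an iterative Gr\"unbaum-type halving of the median set. Starting from $S_0=\median$, one cuts at the barycentre $x_k$ of $S_k$ with a generalized minimizing halfspace $H_k$ (Lemma~\ref{lemma:minimizing halfspace}); Gr\"unbaum's inequality guarantees $\vol(S_{k+1})<(1-\e^{-1})\vol(S_k)$ for $S_{k+1}=\cl{S_k\setminus H_k}$, so $S=\bigcap_k S_k$ is a non-empty convex subset of $\median$ with empty interior, which by induction still satisfies the covering precondition \eqref{basic condition} with $\alpha=\alpha^*(\mu)$; Lemma~\ref{lemma:basic lemma} applied to $S$ then yields a covering by halfspaces whose boundaries contain $\aff{S}$, hence a covering median in $S$. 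Your proposal contains no substitute for this step, which is the heart of the theorem.
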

 
Suppose that $\median$ is not a single point set. It is certainly of interest to single out the subset $\coverM$ of $\median$ of those points that satisfy the additional covering property \eqref{eq:covering property} characteristic to centrally located points. We call such medians the \emph{covering halfspace medians} (or simply \emph{covering medians}) of the measure $\mu$. The concept of covering medians is interesting especially in the situation when the median set is full-dimensional, as frequently happens with data generated from a smooth distribution. In that situation, the relatively large median set typically reduces to a smaller subset of the most centrally located points being the covering medians.

\begin{example}	\label{example:double triangle}
For an empirical measure $\mu\in\Meas[\R^2]$ with atoms at points $a=\left(3,0\right)\tr$, $b=\left(0,5\right)\tr$, $c=\left(-5,0\right)\tr$, $d=\left(1,3\right)\tr$, $e=\left(-1,3\right)\tr$ and $f=\left(-1,1\right)\tr$, the median set $\median$ is equal to the triangle determined by points $d$, $e$ and $f$, while $\coverM$ is a smaller triangle determined by the lines $l(a,e)$, $l(b,f)$ and $l(c,d)$, see the right hand panel of Figure~\ref{fig:circle}.
\end{example}

\begin{figure}[htpb]
\includegraphics[width=\onefig\textwidth]{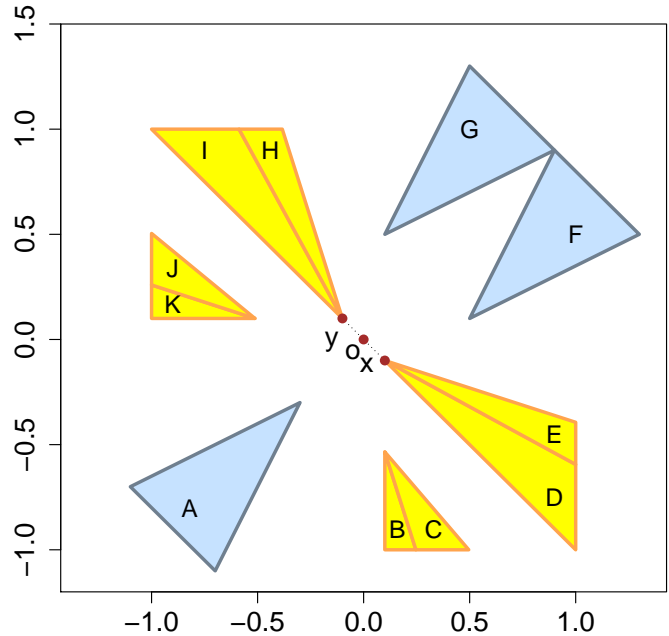}
\caption{Example~\ref{example:not convex}: The set of covering medians $\coverM$ may fail to be convex. For the measure $\mu$, whose support is displayed in this plot in colour, the points $x$ and $y$ belong to $\coverM$. But, their midpoint $o$ is not contained in $\coverM$.}
\label{figure:square}
\end{figure}

The covering medians present a genuine refinement of the halfspace medians. They can be shown to satisfy an array of properties expected from well-behaved location estimators, such as \begin{enumerate*}[label=(\roman*)] \item existence as proved in Theorem~\ref{theorem:covering point in median}; \item affine equivariance; or \item plausible continuity properties when considered as a set-valued function of the measure $\mu$. \end{enumerate*} The covering medians are also intimately connected with the robustness properties of the halfspace median and the depth. All these results will be presented elsewhere in an appropriate context. Here we mention only several basic observations closely linked to Theorem~\ref{theorem:covering point in median}. The first one demonstrates that unlike the standard halfspace median set, the set of the covering medians does not have to be convex.

\begin{example}	\label{example:not convex}
Let $\mu\in \Meas[\R^2]$ be the mixture of uniform distributions on the triangles \emph{A}--\emph{K} displayed in Figure~\ref{figure:square}, such that $\mu(A)=\mu(F)=\mu(G)=2$ and $\mu(B)=\mu(C)=\mu(D)=\mu(E)=\mu(H)=\mu(I)=\mu(J)=\mu(K)=1$. It can be shown that the set $\coverM$ contains points $x$ and $y$ displayed in Figure~\ref{figure:square}, but it does not contain the origin $o=(x+y)/2$. Therefore, the set of the covering halfspace medians of $\mu$ is not convex. For a detailed proof see the Appendix, Section~\ref{section:not convex}.
\end{example}

The proof of Theorem~\ref{theorem:covering point in median} allows us to devise a simple algorithm for finding the covering medians of $\mu$ listed as Algorithm~\ref{algorithm:covering median}. Although this program is applicable to any measure including empirical measures of random samples, its main purpose is not the computation of the sample covering medians of large datasets. Rather, it is intended to guide a quick manual procedure for restricting the location of possible covering medians in visual examples such as those presented throughout this paper.
\begin{algorithm}[htpb]
\SetAlgoLined
\SetKwInOut{Input}{input}\SetKwInOut{Output}{output}

\Input{the full-dimensional median region $\median$ of a measure $\mu$}
\Input{a small positive constant $\varepsilon$ determining desired precision}
\Output{a covering median of $\mu$}
	\BlankLine
	$S_0 \leftarrow \median$ \;
	$x_0 \leftarrow $ barycentre of $S_0$ \;
	$k \leftarrow 0$ \;
	\While{$x_k\notin\coverM$ and volume of $S_k$ exceeds $\varepsilon$}{
		$H_k \leftarrow $ generalized minimizing halfspace of $\mu$ at $x_k$ \;
		\tcp{such a halfspace exists by Lemma~\ref{lemma:minimizing halfspace}}
		$S_{k+1} \leftarrow \cl{S_k \setminus H_k}$ \;
		$k \leftarrow k+1$ \;
		$x_k \leftarrow $ barycentre of $S_k$ \;
	}
	\Return{$x_k$} \;
 \caption{Search for a covering halfspace median of a measure $\mu$.}
 \label{algorithm:covering median}
\end{algorithm}

Once a covering median of $\mu$ is found, it is of interest do determine whether it is unique. Supposing that a covering median of $\mu$ and its collection of generalized minimizing halfspaces are available as the output of Algorithm~\ref{algorithm:covering median}, the following theorem gives a sufficient condition for the uniqueness of this covering median.

\begin{theorem} \label{unique covering median}
Let $x\in\R^d$ be a covering median of $\mu \in \Meas$. Denote by $\half_{\min}$ the collection of halfspaces on the right hand side of the following display
	\begin{equation*} 
	\R^d=\bigcup\left\{H\in\half(x)\colon\mu(\intr{H})\leq \gamma^*(\mu)\right\}.
	\end{equation*}
If for each $H\in \half_{\min}$ and $H^\prime \supset H$ it follows that $\mu(H^\prime)>\mu(H)$, and if there is no subset $\half^\prime \subseteq \half_{\min}$ such that $\bigcup \half^\prime = \R^d$ and $\bigcap \half^\prime \neq \{x\}$, then $\coverM=\{x\}$.
\end{theorem}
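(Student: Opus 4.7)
The plan is to argue by contradiction: assume $y \in \coverM$ with $y \neq x$, and construct a sub-collection $\half^\prime \subseteq \half_{\min}$ covering $\R^d$ whose intersection properly contains $\{x\}$, contradicting the second hypothesis. The natural choice is
\[
\half^\prime = \left\{ H \in \half_{\min} \colon y \in H \right\},
\]
so that $y \in \bigcap \half^\prime$ by construction. With this, the whole argument reduces to verifying $\bigcup \half^\prime = \R^d$; granting that, the second hypothesis applied to $\half^\prime$ forces $\bigcap \half^\prime = \{x\}$, contradicting $y \in \bigcap \half^\prime$ together with $y \neq x$.

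To prove $\bigcup \half^\prime = \R^d$, fix an arbitrary $z \in \R^d$. Since $y$ is itself a covering median, there exists $H^y = H_{y,v} \in \half(y)$ containing $z$ with $\mu(\intr{H^y}) \leq \gamma^*(\mu)$. I then aim to produce $\tilde H \in \half(x)$ which still contains both $y$ and $z$ and satisfies $\mu(\intr{\tilde H}) \leq \gamma^*(\mu)$, so that $\tilde H \in \half^\prime$ covers $z$. A first candidate is the parallel translate $H_{x,v}$ of $H^y$. If $x \in \bd{H^y}$ this translate coincides with $H^y$ and is immediately of the desired form. If $x \in \intr{H^y}$, then $H_{x,v} \subset H^y$ inherits the mass bound but loses $y$; if $x \notin H^y$, then $H_{x,v} \supset H^y$ contains $y$ and $z$ but can violate the mass bound.

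The main obstacle is resolving these last two cases. Hypothesis~1 ensures that each $H \in \half_{\min}$ strictly gains $\mu$-mass under any strict enlargement, which is the tool needed to argue, via a rotation of the candidate translate within the pencil of halfspaces through $x$ that still contain $z$, the existence of a halfspace $\tilde H \in \half_{\min}$ which also contains $y$. Here the covering property of $x$ is used to guarantee that such a rotation direction is feasible, while hypothesis~1 keeps the rotated halfspace's interior $\mu$-mass controlled at every stage up to the bound $\gamma^*(\mu)$. Once the rotation argument is verified for every $z$, we obtain $\bigcup \half^\prime = \R^d$, and the contradiction from hypothesis~2 follows as described above.
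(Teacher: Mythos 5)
Your overall strategy --- assuming $y\in\coverM$ with $y\neq x$ and contradicting the second hypothesis by exhibiting a covering sub-collection $\half^\prime\subseteq\half_{\min}$ all of whose members contain $y$ --- is the same as the paper's, and the reduction to showing $\bigcup\half^\prime=\R^d$ is sound. The gap is that the entire content of the theorem lies in the step you defer to a ``rotation argument,'' and that argument is neither carried out nor, as described, workable. Hypothesis~1 controls the $\mu$-mass of \emph{enlargements} $H^\prime\supset H$ of a halfspace $H\in\half_{\min}$; it says nothing about how $\mu(\intr{H})$ behaves as you rotate $H$ within the pencil of halfspaces through $x$. Likewise, the covering property of $x$ supplies, for each single point $p$, \emph{some} halfspace of $\half_{\min}$ containing $p$; it gives no halfspace through a prescribed pair of points such as $\{y,z\}$, which is exactly what you need. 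So in your hard case $x\notin H^y$ no mechanism in the sketch actually produces $\tilde H$. (Incidentally, your case $x\in\intr{H^y}$ is not a case to be resolved by rotation: it is impossible. Writing $m=(x+y)/2$, one has $H_{x,v}\subset H_{m,v}\subset\intr{H_{y,v}}$, so $H_{x,v}\in\half_{\min}$, and hypothesis~1 gives $\mu(H_{x,v})<\mu(H_{m,v})\leq\mu(\intr{H_{y,v}})\leq\gamma^*(\mu)\leq\alpha^*(\mu)$, contradicting $x\in\median$.)

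The paper closes precisely this gap with a limit construction centred at the midpoint $m=(x+y)/2$. The observation just made shows that any halfspace through $y$ with interior mass at most $\gamma^*(\mu)$ that contains $x$ must in fact have $x$ on its boundary. For a target point $w$ not already covered by such a halfspace, one sets $z_n=(1-1/n)m+w/n\to m$, uses the covering property of $y$ to pick $H_n\in\half(y)$ with $z_n\in H_n$ and $\mu(\intr{H_n})\leq\gamma^*(\mu)$, notes $x\notin\intr{H_n}$ by the same observation, and extracts a convergent subsequence via Lemma~\ref{lemma:convergent sequence}. The limit $H_w$ satisfies $y\in\bd{H_w}$, $x\notin\intr{H_w}$ and $m\in H_w$; since $m$ lies on $L(x,y)$, a one-line computation with the inner normal forces $\{x,y\}\subset\bd{H_w}$, and one also checks $w\in H_w$. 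This produces the required member of $\half^\prime$ covering $w$. Without this step, or a genuine substitute for it, your proof is incomplete.
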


Note that the point $y_c$ from Example~\ref{example:triangle gap} given in Section~\ref{section:level sets} satisfies the conditions of Theorem~\ref{unique covering median} and $\coverM=\{y_c\}$, while the median of that measure $\mu$ is full-dimensional. On the other hand, Theorem~\ref{unique covering median} does not apply to any of the two covering medians $x, y$ found in Example~\ref{example:not convex}, as the condition regarding contiguous support at minimizing halfspaces is not satisfied.

%
%

\subsection{Trimmed regions for atomic measures}	\label{section:computation}	

Our second application of the generalized ray basis theorem is a necessary condition on the depth regions $\Damu$ of atomic measures. 

\begin{corollary} \label{Ray basis corollary}
For $\mu$ an atomic measure with finitely many atoms, $\alpha \in \R$, $x \notin \intr{\Damu}$ and $F\in \mathfrak{F}(x,\Damu)$ there exists $H(x,F)\in\half(\Damu,F)$ such that $x\in H(x,F)$ and $\mu(\intr{H(x,F)})< \alpha$. Moreover, each face $F$ of the convex polytope $\Damu$ of dimension $\dim(F) < d$ is contained in the convex hull of at least $\min\{\dim(\Damu)+1,d\}$ atoms located in a hyperplane in $\R^d$. 
\end{corollary}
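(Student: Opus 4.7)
The plan is to deduce both claims from Theorem~\ref{theorem:ray basis depth regions} by exploiting the finiteness of the atom set.

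For the strict inequality in the first assertion, note that since $\mu$ has only finitely many atoms, $\D(\cdot;\mu)$ attains only finitely many values $\lambda_0<\ldots<\lambda_K$. In the non-trivial case $\Damu\neq\R^d$, let $\lambda_i$ be the smallest depth value not below $\alpha$; then $\Damu=D_\beta(\mu)$ for every $\beta\in(\lambda_{i-1},\alpha]$. Pick $\beta=\alpha-\epsilon$ with $\epsilon>0$ so small that $\beta>\lambda_{i-1}$. Since the region, the point $x$, and the class $\mathfrak{F}(x,\Damu)$ are unchanged upon replacing $\alpha$ by $\beta$, Theorem~\ref{theorem:ray basis depth regions} applied to $\beta$ yields $H(x,F)\in\half(\Damu,F)$ with $x\in H(x,F)$ and $\mu(\intr{H(x,F)})\leq\beta<\alpha$.

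For the ``moreover'' part, fix a face $F$ of $\Damu$ with $\dim F<d$ and choose an auxiliary face $F'\supseteq F$ together with a halfspace $H$ satisfying $F'\subseteq\bd H$ and $\mu(\intr H)<\alpha$ as follows. If $\dim\Damu=d$, take $F'$ to be any facet of the polytope $\Damu$ containing $F$ (every proper face of a polytope is an intersection of the facets containing it), and let $H$ be the unique outer supporting halfspace of $\Damu$ at $F'$; applied at some $x\in\relint F'$ with $F'\in\mathfrak{F}(x,\Damu)$, the first part yields $\mu(\intr H)<\alpha$. If $\dim\Damu<d$, set $F':=\Damu$; then $\Damu\in\mathfrak{F}(x,\Damu)$ for any $x\in\relint\Damu$, and the first part furnishes $H$ with $\Damu\subseteq\bd H$ and $\mu(\intr H)<\alpha$. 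For every $v\in F'$ the bound $\mu(H)\geq\D(v;\mu)\geq\alpha>\mu(\intr H)$ forces $\mu(\bd H)>0$, so the set $A$ of atoms of $\mu$ lying in the hyperplane $\bd H$ is non-empty.

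I would then show $F'\subseteq\conv A$ by contradiction. If some $v\in F'$ lay outside $\conv A$, then $v$ would not itself be an atom, and strict separation in $\bd H$ would produce a $(d-2)$-dimensional affine subspace $\tilde L\subset\bd H$ through $v$ with $A$ strictly on one side of $\tilde L$. Rotating $H$ by a small angle $\theta$ about $\tilde L$, in the direction that moves the $A$-side of $\tilde L$ out of the halfspace, gives $H^{(v)}$ with $v\in\bd H^{(v)}$. In local coordinates adapted to $\tilde L$ and the normal of $\bd H$, for $|\theta|$ small enough and $\theta$ outside a finite ``bad'' set every atom off $\bd H$ remains on its original side of the boundary (positive gap), every atom of $A$ is evicted into $(H^{(v)})\compl$, and $\bd H^{(v)}$ carries no atom at all (no atom lies on $\tilde L\cap\bd H$, by strict separation combined with $v$ not being an atom). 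Hence $\mu(H^{(v)})=\mu(\intr H)<\alpha$, contradicting $\mu(H^{(v)})\geq\D(v;\mu)\geq\alpha$. Consequently $F\subseteq F'\subseteq\conv A$; since $\dim F'=d-1$ in the first case and $\dim F'=\dim\Damu$ in the second, we get $|A|\geq\dim F'+1=\min\{\dim\Damu+1,d\}$, and $A\subset\bd H$ lies in a common hyperplane.

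The main obstacle will be the rotation step, where one must verify in one stroke that the chosen small perturbation (i) evicts every atom of $A$ from the closed halfspace, (ii) preserves every other atom's side, and (iii) leaves no atom on the new boundary. Each of these is routine once coordinates are set up, but the simultaneous choice of a sufficiently small and generic $\theta$ is the delicate point, and it relies essentially on the finiteness of the atom set.
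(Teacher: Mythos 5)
Your proof is correct. The second (``moreover'') part follows the paper's own argument in all essentials: you reduce to a face $F'$ of dimension $\min\{\dim(\Damu),d-1\}$, take the touching halfspace $H$ with $F'\subseteq\bd{H}$ and $\mu(\intr{H})<\alpha$, and rule out $v\in F'\setminus\conv{A}$ by tilting $H$ about a $(d-2)$-flat through $v$ that strictly separates $v$ from the atoms $A\subset\bd{H}$; the paper phrases the same perturbation in terms of the restriction $\mu_1$ of $\mu$ to $\bd{H}$ having positive depth at every point of $F'$, so that $F'\subseteq U_0(\mu_1)\subseteq\conv{A}$, but the mechanism is identical (and, as you note, for all sufficiently small $\theta$ the rotation works — no genericity of $\theta$ is actually needed). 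Where you genuinely diverge is the first part: the paper obtains the strict inequality by reopening the proof of Lemma~\ref{lemma:basic lemma} and observing that the Fatou/liminf step there runs over finitely many distinct masses, each $<\alpha$; you instead exploit that $\D(\cdot;\mu)$ takes finitely many values to find $\beta<\alpha$ with $D_\beta(\mu)=\Damu$ and apply Theorem~\ref{theorem:ray basis depth regions} as a black box at level $\beta$. Your route is arguably cleaner since it does not require re-examining the internals of an earlier proof. The only blemish is the degenerate case $\Damu=\emptyset$ (i.e.\ $\alpha$ exceeding every depth value), where ``the smallest depth value not below $\alpha$'' does not exist; the same trick still works by choosing any $\beta$ strictly between the maximal depth value and $\alpha$, so this is a phrasing issue rather than a gap.
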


A special case of this result in the situation when $\Damu$ is full-dimensional was used in the derivation of a fast algorithm for the computation of the depth regions of datasets in \cite{Liu_etal2019}. The present general version may find applications in the computation of the halfspace depth of datasets in the situation when $\dim\left(\Damu\right) < d$. For example, suppose that for given $\alpha \in \R$ and $\mu$ corresponding to a dataset the algorithm from \cite{Liu_etal2019} fails to find an interior point of the region $\Damu$. The reason may be twofold: either \begin{enumerate*}[label=(\roman*)] \item $\Damu$ is an empty set, or \item it is less than full-dimensional.\end{enumerate*} Corollary~\ref{Ray basis corollary} asserts that in the latter case, the region $\Damu$ must be contained in a hyperplane spanned by $d$ data points. Thus, the search for $\Damu$ may continue in the intersection of data-determined hyperplanes, and the last known non-empty region $\D_\beta(\mu)$ for $\beta < \alpha$. 

%
%

\subsection{Estimation of central regions}	\label{section:Dyckerhoff}

Part~\ref{strict monotonicity} of Theorem~\ref{theorem:intersection} yields an important equality. According to \citet[Definition~3.1]{Dyckerhoff2017} the halfspace depth is \emph{strictly monotone} at $\mu \in \Meas$ if for all $\alpha \in (0,\alpha^*(\mu))$ we can write $\Damu = \cl{\Uamu}$. Strict monotonicity is a crucial assumption that ensures the sample version consistency of the halfspace depth-trimmed regions \cite[Example~4.2]{Dyckerhoff2017}. As a consequence of Lemma~\ref{lemma:strict monotonicity condition} we obtain a condition ensuring the strict monotonicity of the halfspace depth --- for a measure $\mu\in\Meas$ to have a strictly monotone depth it is enough to assume contiguous support and smoothness at a single point of the median set. In particular, under these very mild conditions, it is possible to guarantee the almost sure uniform convergence of the sample depth trimmed regions to their population counterparts. To precise this, we need to consider a topology on the space of compact subsets of $\R^d$. A natural choice is that given by the Hausdorff distance \cite[Section~1.8]{Schneider2014}. For $K, L \subset \R^d$ compact the Hausdorff distance of $K$ and $L$ is given by
		\[	\Haus(K,L) = \max\left\{ \sup_{x \in K} \inf_{y \in L} \left\vert x - y \right\vert, \sup_{x \in L} \inf_{y \in K} \left\vert x - y \right\vert \right\}.	\]

\begin{corollary}	\label{corollary:Dyckerhoff}
Let $\mu\in\Meas$ be a probability measure with contiguous support that is smooth at some $x \in \median$. Let $X_1, \dots, X_n$ be a random sample from $\mu$ defined on the probability space $\left(\Omega, \mathcal A, \PP\right)$, and denote by $\widetilde{\mu_n} \equiv \widetilde{\mu_n}(\omega) \in \Meas$ the empirical measure of $X_1, \dots, X_n$. Then for any closed interval $A \subset \left( 0, \alpha^*(\mu) \right)$
	\[	\PP\left( \left\{ \omega \in \Omega \colon \lim_{n\to\infty} \sup_{\alpha \in A} \Haus\left( \D_{\alpha}(\widetilde{\mu_n}), \Damu \right) = 0 \right\}	\right) = 1.	\]
Further, let $\mu\in\Meas$ be any measure that is smooth with contiguous support. Then for any closed interval $A \subset \left( 0, \alpha^*(\mu) \right)$ and any $\mu_n$ converging weakly to $\mu$ we have
	\[	\lim_{n\to\infty} \sup_{\alpha \in A} \Haus\left( \D_{\alpha}(\mu_n), \Damu \right) = 0. \]
\end{corollary}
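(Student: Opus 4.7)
The plan is to reduce both claims to the strict monotonicity of the halfspace depth at $\mu$, namely the identity $\Damu = \cl{\Uamu}$ for every $\alpha \in (0, \alpha^*(\mu))$, and then invoke the consistency theorem of \cite[Example~4.2]{Dyckerhoff2017}.

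First I would verify strict monotonicity. Fix $\alpha \in (0, \alpha^*(\mu))$. The smoothness of $\mu$ at some $x \in \median$ (given by hypothesis in the first claim, and automatic from global smoothness in the second) combined with $\D(x;\mu) = \alpha^*(\mu) > \alpha$ places $x$ in $\Uamu$, so Lemma~\ref{lemma:strict monotonicity condition} yields $\intr{\Uamu} \ne \emptyset$. The global contiguous-support hypothesis then transfers to contiguous support at $\Damu$: if it failed, there would exist $H^\prime \subset H$ in $\half$ with $H^\prime \in \half(\Damu)$, $\bd{H} \cap \intr{\Damu} \ne \emptyset$ and $\mu(H^\prime) = \mu(H)$, so that the open slab $H \setminus H^\prime$ between two parallel hyperplanes has non-empty interior and zero $\mu$-mass; since any $z \in \bd{H} \cap \intr{\Damu}$ satisfies $\D(z;\mu) \geq \alpha > 0$, every closed halfspace through $z$ carries positive mass, forcing the support of $\mu$ to straddle the slab, contradicting the global hypothesis. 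With both local conditions in place, part~\ref{strict monotonicity} of Theorem~\ref{theorem:intersection} supplies $\cl{\Uamu} = \Damu$.

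Next I would invoke the consistency machinery. For the random-sample claim, the collection $\half$ is a Vapnik--Chervonenkis class, so the uniform Glivenko--Cantelli theorem gives $\sup_{H \in \half} |\widetilde{\mu_n}(H) - \mu(H)| \to 0$ $\PP$-almost surely; combined with the strict monotonicity of the depth at $\mu$ just obtained and the compactness of $A \subset (0, \alpha^*(\mu))$, \cite[Example~4.2]{Dyckerhoff2017} delivers the almost-sure Hausdorff-uniform convergence. The deterministic claim follows the same blueprint: global smoothness of $\mu$ makes every $H \in \half$ a $\mu$-continuity set, so weak convergence $\mu_n \to \mu$ yields pointwise $\mu_n(H) \to \mu(H)$, which is upgraded to uniformity over $\half$ by a routine equicontinuity argument parameterised by the compact sphere $\Sph$, and is then fed into the same Dyckerhoff scheme.

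The hard part is the strict monotonicity step, and within it the most delicate bit is the passage from the global contiguous-support assumption to its local counterpart at $\Damu$; once that is dispatched, the conclusion reduces entirely to previously developed results and no new ideas are required.
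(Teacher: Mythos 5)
Your proposal is correct and follows essentially the same route as the paper: strict monotonicity of the depth at $\mu$ is obtained from Lemma~\ref{lemma:strict monotonicity condition} together with part~\ref{strict monotonicity} of Theorem~\ref{theorem:intersection}, and the conclusion is then delegated to the consistency theorem of \cite{Dyckerhoff2017}, whose remaining hypothesis (compact/uniform convergence of the depths) you justify via the Vapnik--Chervonenkis property of $\half$, respectively via weak convergence and smoothness. The only portions you work out more explicitly than the paper --- the passage from global contiguous support to contiguous support at $\Damu$, and the uniform convergence of the depth functions --- are handled correctly, so no gap remains.
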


This result follows directly from \cite[Theorem~4.5]{Dyckerhoff2017} and our previous discussion. The \emph{General assumption} in \cite[p.~9]{Dyckerhoff2017} is satisfied thanks to our Lemma~\ref{lemma:strict monotonicity condition}; the assumption of compact convergence of the depth from \cite{Dyckerhoff2017} follows, e.g., from \cite[Section~3.2.7]{Nagy_etal2019}. Corollary~\ref{corollary:Dyckerhoff} should be compared with earlier contributions regarding the consistency of the trimmed regions and derived quantities \cite{Nolan1992, He_Wang1997, Zuo_Serfling2000b, Kim2000, Masse2002, Wang_Serfling2006, Wang2019}. In those references analogous consistency results are proved under more restrictive conditions.

%
%
%
%

\appendix

\section{Proofs of the theoretical results}

We begin with a lemma collecting several properties of the convergence of halfspaces.

\begin{lemma}	\label{lemma:convergent sequence}
Consider $\mu\in \Meas$, a sequence of closed halfspaces $\{H_n\}_{n=1}^\infty \subset \half$, and $\alpha \in \R$. The following claims hold true:
	\begin{enumerate}[label=(\roman*), ref=(\roman*)]
	\item \label{convergence of halfspaces} If there is a sequence $\{x_n\}_{n=1}^\infty$ such that $x_n\in \bd{H_n}$ and $x_n\rightarrow x$, then there exists a subsequence $\{H_{n_k}\}_{k=1}^\infty$ converging to a closed halfspace $H\in\half(x)$.
	\item \label{smaller or equal alpha} If $H_n\rightarrow H$ and $\mu(\intr{H_n})\leq \alpha$ for each $n$, then $\mu(\intr{H})\leq \alpha$.
	\item \label{equal to alpha} If $H_n\rightarrow H$, $\mu(\intr{H_n})\leq \alpha$ for each $n$, $H\cap \Damu\neq \emptyset$ and $\mu(\bd{H})=0$, then $\mu(H)=\alpha$.
	\item \label{converge to touching} If $H_n\rightarrow H$ and $x \notin \intr{H_n}$ for each $n$, then $x \notin \intr{H}$.
	\end{enumerate}
\end{lemma}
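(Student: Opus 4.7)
The plan is to unwind the definition of halfspace convergence ($H_n = H_{z_n, u_n} \to H_{z,u}$ means $z_n \to z$ and $u_n \to u$) and to reduce each of the four claims to a short continuity or limit argument on the linear functional $y \mapsto \langle z_n - y, u_n\rangle$.

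For part~\ref{convergence of halfspaces}, I would write $H_n = H_{x_n, u_n}$ with the defining boundary point $x_n \to x$ already given by hypothesis, and exploit compactness of $\Sph$ to extract a subsequence along which $u_{n_k} \to u \in \Sph$; by the definition of halfspace convergence this produces $H_{n_k} \to H_{x,u}$, which evidently lies in $\half(x)$.

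For part~\ref{smaller or equal alpha}, the key observation is that if $y \in \intr{H}$, i.e.\ $\langle z - y, u\rangle < 0$ (using the representation above with $H = H_{z,u}$), then by continuity $\langle z_n - y, u_n\rangle < 0$ eventually, so $y \in \intr{H_n}$ for all large $n$. This gives the pointwise bound $\mathbb{I}_{\intr{H}} \leq \liminf_n \mathbb{I}_{\intr{H_n}}$, and Fatou's lemma concludes
\[
\mu(\intr{H}) \leq \liminf_n \mu(\intr{H_n}) \leq \alpha.
\]
Part~\ref{equal to alpha} then follows in one line: the hypothesis $\mu(\bd{H}) = 0$ gives $\mu(H) = \mu(\intr{H}) \leq \alpha$ by~\ref{smaller or equal alpha}, while for any $y \in H \cap \Damu$ the definition \eqref{halfspace depth} of the halfspace depth yields $\mu(H) \geq \D(y;\mu) \geq \alpha$, so equality holds.

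For part~\ref{converge to touching}, writing again $H_n = H_{z_n, u_n}$ and $H = H_{z,u}$, the hypothesis $x \notin \intr{H_n}$ reads $\langle z_n - x, u_n\rangle \geq 0$; passing to the limit gives $\langle z - x, u\rangle \geq 0$, which is exactly $x \notin \intr{H}$. I do not expect any genuine obstacle here — the only mildly delicate point is the interchange of limit and measure in~\ref{smaller or equal alpha}, where one must be careful to phrase the inclusion $\intr{H} \subseteq \liminf_n \intr{H_n}$ using strict inequalities so that Fatou's lemma applies cleanly; the claim $\intr{H} \subseteq \liminf_n H_n$ with non-strict inequality would be false in general at boundary points.
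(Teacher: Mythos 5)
Your proposal is correct and follows essentially the same route as the paper's proof: compactness of $\Sph$ for part~\ref{convergence of halfspaces}, the inclusion $\intr{H}\subseteq\liminf_n\intr{H_n}$ plus Fatou for part~\ref{smaller or equal alpha}, the two-sided bound $\alpha\leq \D(y;\mu)\leq\mu(H)=\mu(\intr{H})\leq\alpha$ for part~\ref{equal to alpha}, and the continuity of the defining inequality for part~\ref{converge to touching}. Your closing caveat about using strict inequalities on interiors is exactly the point that makes the Fatou step work.
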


\begin{proof}
For part \ref{convergence of halfspaces}, denote $H_n = H_{x_n, v_n}$. The set of unit vectors $\Sph$ is bounded, meaning that $\left\{ v_n \right\}_{n=1}^\infty \subset \Sph$ contains a convergent subsequence $\{v_{n_k}\}_{k=1}^\infty$, whose limit point we denote by $v \in \Sph$. We obtain that $H_{n_k}\rightarrow H_{x,v} = H \in \half(x)$ as $k \to \infty$ as desired. Part \ref{smaller or equal alpha} is a consequence of the Fatou lemma \cite[Lemma~4.3.3]{Dudley2002} and the fact that $\intr{H} \subseteq {\lim\inf}_{k\to\infty} \intr{H_{n_k}}$. Under the assumptions of part~\ref{equal to alpha} there exists $x \in \Damu \cap H$. For this point we can directly write by part \ref{smaller or equal alpha} of this lemma $\alpha \leq \D\left(x;\mu\right) \leq \mu(H) = \mu(\intr{H}) \leq \alpha$. To prove \ref{converge to touching}, assume that $x\in\intr{H}$. Then $x\in\intr{H_n}$ starting from some index, a contradiction.
\end{proof}

\begin{lemma}	\label{lemma:boundary of halfspace}
If $H\in\half$ and $S \subset \R^d$ is a convex set such that $\intr{H} \cap S = \emptyset$ and $\relint{S} \cap H \neq \emptyset$, then $\aff{S}\subseteq \bd{H}$.
\end{lemma}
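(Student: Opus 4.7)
The plan is a short separation-style argument exploiting the fact that $\bd{H}$ is an affine hyperplane and the two sides $\intr{H}$ and $H\compl$ are open.

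First I would pick a witness point. By hypothesis there exists $p \in \relint{S} \cap H$. Since $\intr{H}\cap S = \emptyset$ and $p \in S \cap H = S \cap (\intr{H}\cup \bd{H})$, necessarily $p \in \bd{H}$. Because $\bd{H}$ is an affine subspace, it suffices to prove the stronger statement $S \subseteq \bd{H}$, from which $\aff{S}\subseteq \bd{H}$ follows immediately.

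Next I would argue this by contradiction. Assume some $q \in S$ satisfies $q \notin \bd{H}$. Since $S\cap \intr{H} = \emptyset$, the point $q$ must lie in the open opposite halfspace $H\compl = \R^d \setminus H$. Now use the standard property of the relative interior: because $p \in \relint{S}$ and $q \in S$, there exists $\eps>0$ such that the extended point
\[
	p' := p + \eps(p-q)
\]
still belongs to $S$. On the line $l(q,p)$, parametrise signed distance by the linear functional $u \mapsto \langle u, v\rangle$ defining $H = H_{p,v}$; then $\langle q,v\rangle < \langle p,v\rangle$ (because $q \in H\compl$ and $p \in \bd{H}$), so $\langle p',v\rangle = \langle p,v\rangle + \eps(\langle p,v\rangle - \langle q,v\rangle) > \langle p,v\rangle$. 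Hence $p' \in \intr{H}$, contradicting $\intr{H}\cap S = \emptyset$.

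Therefore no such $q$ exists, so $S \subseteq \bd{H}$, which gives $\aff{S}\subseteq \bd{H}$ since $\bd{H}$ is itself affine. The whole argument is routine; the only mildly delicate point is verifying that the witness $p$ actually lies on $\bd{H}$ rather than inside $\intr{H}$, which is handled immediately by the first hypothesis. I do not anticipate any real obstacle.
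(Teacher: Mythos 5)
Your proof is correct and follows essentially the same route as the paper: both exploit the relative interior of $S$ at the witness point $p\in\relint{S}\cap\bd{H}$ to push a point of $S$ across $\bd{H}$ into $\intr{H}$, contradicting $\intr{H}\cap S=\emptyset$ (the paper phrases this via a relatively open ball $B_p\subset S$ in $\aff{S}$, you phrase it pointwise via the extension $p+\eps(p-q)$). No issues.
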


\begin{proof} 
Consider $x \in \relint{S} \cap H$. Since $x \in \relint{S}$, there exists a ball $B_x$ centred at $x$ in space $\aff{S}$ so that $B_x \subset S$. Since $\intr{H} \cap S = \emptyset$ we get $x \in \bd{H}$ and $B_x \subset \bd{H}$, meaning that $\aff{S} = \aff{B_x} \subseteq \bd{H}$.
\end{proof}

\begin{lemma}	\label{lemma:faces} 
For $A \subset \R^d$ non-empty convex and $x\notin \intr{A}$ the collection $\mathfrak{F}(x,A)$ contains a non-empty set.
\end{lemma}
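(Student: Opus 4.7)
The hypothesis $x\notin\intr{A}$ (together with $\intr{A}\subseteq\cl{A}$, so that any point of $\cl{A}\setminus\intr{A}$ lies in $\bd{A}$) splits into two exhaustive cases governed by the two branches in the definition of $\mathfrak{F}(x,A)$: either $x\in\bd{A}$ or $x\notin\cl{A}$. The plan is to produce an explicit non-empty face in each case.

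Case $x\in\bd{A}$ is immediate. By the standard partition theorem for closed convex sets (see \cite[Section~2.1]{Schneider2014}), $\cl{A}$ is the disjoint union of the relative interiors of its faces, so there is a unique face $F\in\faces{A}$ with $x\in\relint{F}$; this $F$ lies in $\mathfrak{F}(x,A)$ and is non-empty because it contains $x$. (If $A$ is full-dimensional, $F\ne\cl{A}$, since otherwise $x\in\relint{\cl{A}}=\intr{A}$ contradicts $x\notin\intr{A}$.)

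Case $x\notin\cl{A}$ requires the main geometric argument. Since $\cl{A}$ is non-empty, closed and convex, the metric projection $p$ of $x$ onto $\cl{A}$ exists. Standard convex analysis yields that the hyperplane $\bd{H^-}$ through $p$ orthogonal to $x-p$ supports $\cl{A}$ at $p$, with $\cl{A}\subseteq H^-=\{y\colon\langle y-p,x-p\rangle\leq 0\}$ and $x\in\intr{H^+}$, where $H^+$ is the opposite closed halfspace. Take $F$ to be the unique face of $\cl{A}$ with $p\in\relint{F}$. A short convexity argument shows $F\subseteq\bd{H^-}$: if some $q\in F$ satisfied $\langle q-p,x-p\rangle<0$, then by $p\in\relint{F}$ there is $q^\prime\in F$ with $p$ in the relative interior of the segment $[q,q^\prime]$, and since $q^\prime\in F\subseteq H^-$ we would get $0=\langle p-p,x-p\rangle<0$, a contradiction.

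It remains to check that $\relint{\conv{F\cup\{x\}}}\cap\cl{A}=\emptyset$. Because $F\subseteq\bd{H^-}$ and $x\notin\bd{H^-}$, the point $x$ is not in $\aff{F}$, so $\conv{F\cup\{x\}}$ is a genuine join of dimension $\dim(F)+1$, and every $y\in\relint{\conv{F\cup\{x\}}}$ can be written as $y=\lambda x+(1-\lambda)f$ with $\lambda\in(0,1)$ and $f\in\relint{F}$. Then
\[
\langle y-p,x-p\rangle=\lambda\|x-p\|^{2}+(1-\lambda)\langle f-p,x-p\rangle=\lambda\|x-p\|^{2}>0,
\]
so $y\in\intr{H^+}$ and hence $y\notin H^-\supseteq\cl{A}$. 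Therefore $F\in\mathfrak{F}(x,A)$ and $F$ is non-empty. The only mildly delicate step is the justification that $F\subseteq\bd{H^-}$ and the explicit description of $\relint{\conv{F\cup\{x\}}}$; both are short convex-geometric computations exploiting the fact that $p$ is the \emph{closest} point and that $p$ is a relative interior point of $F$.
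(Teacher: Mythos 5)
Your proof is correct and follows essentially the same route as the paper's: the same two-case split, the identical face-partition argument for $x\in\bd{A}$, and, for $x\notin\cl{A}$, a supporting halfspace that strictly separates $x$ from $\cl{A}$ together with the observation that the join of $x$ with the resulting face lies in the open side of that halfspace. The only (immaterial) difference is that the paper obtains the halfspace abstractly from the Hahn--Banach separation theorem and takes the exposed face $\cl{A}\cap\bd{H}$, whereas you construct it explicitly via the metric projection $p$ and take the possibly smaller face containing $p$ in its relative interior, which costs you the extra (correctly executed) verification that this face is contained in the supporting hyperplane.
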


\begin{proof}
In the case that $x\in \bd{A}$, there is a face $F \ne \emptyset$ of $\bd{A}$ containing $x$ in its relative interior \cite[Theorem~2.1.2]{Schneider2014}, so $\mathfrak{F}(x,A)$ contains a non-empty set. Otherwise, let $x\notin \cl{A}$. The Hahn-Banach theorem \cite[Theorem~1.3.4]{Schneider2014} guarantees that there exists a touching halfspace $H \in \half(A)$ of $\cl{A}$ such that $x\in \intr{H}$. Denote $F=\cl{A} \cap \bd{H}$. Then, $F$ is a non-empty face of $\cl{A}$, and certainly also $\cl{A} \setminus F \subseteq H\compl$, $F \subseteq \bd{H}$, and $x \in \intr{H}$, meaning that $F\in \mathfrak{F}(x,A)$. 
\end{proof}

\subsection{Proof of Lemma~\ref{lemma:minimizing halfspace}}

Let $\alpha = \D\left(x;\mu\right)$. The definition of the halfspace depth ensures that for each $n = 1, 2, \dots$ there exists $H_n \in \half(x)$ such that $\mu(H_n) < \alpha+1/n$. Applying part~\ref{convergence of halfspaces} of Lemma~\ref{lemma:convergent sequence} we obtain a subsequence $\{H_{n_k}\}_{k=1}^\infty$ converging to $H\in\half(x)$. From the Fatou lemma \cite[Lemma~4.3.3]{Dudley2002} we get $\mu(\intr{H}) \leq {\lim\inf}_{k\to\infty} \mu(\intr{H_{n_k}}) \leq \alpha$.

Under the additional assumption of smoothness of $\mu$ at $x$ we know that $\mu\left(\bd{H}\right) = \mu\left(\bd{H_n}\right) = 0$ for each $n=1,2,\dots$. In that case, the Fatou lemma guarantees $\mu(H) = \lim_{k \to \infty} \mu(H_{n,k}) \leq \alpha$. Since also $\alpha = \D\left(x;\mu\right) \leq \mu(H) \leq \alpha$, we obtain the desired result.

\subsection{Proof of Lemma~\ref{lemma:basic condition}}

For any $S \subset \R^d$ convex and $H \in \half$, the condition $\intr{H} \cap S = \emptyset$ implies $\intr{H} \cap \intr{S} = \emptyset$. Furthermore $\intr{S} \subseteq S$. Thus, it is enough to show the claim \eqref{basic condition} with the smaller set $\intr{S}$ on the left hand side, and the condition $\intr{H} \cap S = \emptyset$ in the expression on the right hand side. 

We start with $\Damu$. First note that by the very definition of $\Damu$
	\begin{equation}	\label{empty intersection}
	\mu(\intr{H})<\alpha \mbox{ implies }\intr{H} \cap \Damu=\emptyset.	
	\end{equation}
Consider any $x\notin \intr{\Damu}$. Suppose first that $x\notin \Damu$, meaning that $\D(x;\mu)<\alpha$. The auxiliary Lemma~\ref{lemma:minimizing halfspace} implies that there exists $H\in \half(x)$ such that $\mu(\intr{H})<\alpha$, and \eqref{empty intersection} ensures that $x$ is covered by the union on the right hand side of \eqref{basic condition}. If $x \in \bd{\Damu}$, we can approach $x$ by a sequence $\{x_n\}_{n=1}^\infty\subset \left(\Damu\right)\compl$ such that $x_n \to x$. We apply Lemma~\ref{lemma:minimizing halfspace} to each $x_n$ to obtain halfspaces $H_n\in\half(x_n)$ such that $\mu(\intr{H_n})<\alpha$, and consequently $\intr{H_n}\cap \Damu=\emptyset$ for each $n$ by \eqref{empty intersection}. From Lemma~\ref{lemma:convergent sequence}, parts~\ref{convergence of halfspaces}, \ref{smaller or equal alpha} and~\ref{converge to touching} we conclude that there exists $H\in \half(x)$ such that $\mu(\intr{H})\leq\alpha$ and $\intr{H} \cap \Damu=\emptyset$, as desired.

The proof for $\Uamu$ is analogous. First, observe that $\mu(\intr{H})\leq \alpha$ implies $\intr{H}\cap \Uamu=\emptyset$ by the definition of $\Uamu$. Consider any $x\notin \intr{\Uamu}$. If $x\notin \Uamu$, then $\D(x;\mu)\leq \alpha$, so there is $H\in \half(x)$ such that $\mu(\intr{H})\leq \alpha$ by Lemma~\ref{lemma:minimizing halfspace}. Otherwise, $x \in \bd{\Uamu}$ may be approached by a sequence $\{x_n\}_{n=1}^\infty\subset \left(\Uamu\right)\compl$ such that $x_n \to x$, and Lemma~\ref{lemma:convergent sequence} again gives the desired result.

\subsection{Proof of Lemma~\ref{lemma:basic lemma}}

Since each $H \in \half(S)$ has empty intersection with $\intr{S}$, it is only left to prove $(\intr{S})\compl\subseteq \bigcup\left\{ H \in \half(S) \colon \mu(\intr{H}) \leq \alpha \right\}$. We take any $x \notin \intr{S}$ and consider three cases.

\textbf{Case (i): $\intr{S} = \emptyset$.} By definition, $\half\left(\emptyset\right) = \half$. Therefore, the statement reduces directly to condition \eqref{basic condition}.

\textbf{Case (ii): $x \notin \cl{S}$ and $\intr{S} \ne \emptyset$.} We consider only the situation when $F$ is non-empty, which is possible by Lemma~\ref{lemma:faces}, as the other case follows trivially from condition \eqref{basic condition}. Denote $C=\relint{\conv{F\cup \{x\}}}$. Take $y \in \relint{F}$, and the sequence $y_n = \left(1 - 1/n\right)y + x/n \in C$, for $n = 1, 2, \dots$, that converges to $y$. Because $x \notin S$, the choice of $F$ ensures that $C\cap \cl{S}= \emptyset$. Thus, for each $n$ we have $y_n\notin \cl{S}$ and condition~\eqref{basic condition} implies the existence of $H_n \in \half$ such that $y_n \in H_n$, $\intr{H_n} \cap S = \emptyset$ and $\mu\left(\intr{H_n}\right) \leq \alpha$. Applying Lemma~\ref{lemma:convergent sequence}, parts~\ref{convergence of halfspaces}, \ref{smaller or equal alpha} and~\ref{converge to touching}, we conclude that there is a subsequence $\{H_{n_k}\}_{k=1}^\infty$ converging to $H \in \half(y)$, such that $\mu(\intr{H})\leq \alpha$ and $\intr{H}\cap S=\emptyset$. Then $y\in \relint{F}\cap H$. From Lemma~\ref{lemma:boundary of halfspace} it follows that $F\subset \bd{H}$, so $H\in\half (S,F)$.

\textbf{Case (iii): $x \in \bd{S}$ and $\intr{S} \ne \emptyset$.} The face $F$ was chosen so that $x \in \relint{F}$. It is enough to follow the lines of Case (ii) of this proof with $y$ replaced by $x$, and $x$ replaced by $z = x + u \notin S$ for $u \in \Sph$ (any) outer normal to $S$ at $x$. The latter choice assures that $F \in \mathfrak F(y,S)$, which allows us to proceed exactly as in Case (ii).

\subsection{Proof of Theorem~\ref{theorem:ray basis depth regions}}
The general statement follows from Lemmas~\ref{lemma:basic condition} and~\ref{lemma:basic lemma}. Thus, it is enough to prove only the results under additional assumptions. We take $x \in \R^d$ and consider the construction of the halfspace $H = H(x,F) \in \half(\Damu, F)$ from the proof of Lemma~\ref{lemma:basic lemma}. Only the situation when both $\Damu$ and $F$ are non-empty is considered; the other case is straightforward and trivial.

\textbf{Part~\ref{smooth case}:} Under the assumption of smoothness at $\Damu$ we have $\mu(H(x,F))=\mu(\intr{H(x,F)})$. Since $\emptyset \ne F \subseteq \Damu$ we also have that $H(x,F) \cap \Damu \neq \emptyset$, which implies $\mu(H(x,F))=\alpha$.

\textbf{Part~\ref{contiguous support case}:} It is enough to show that for any $H\in \half$ the condition $\mu(\intr{H})\leq \alpha$ implies $\intr{\Damu} \cap H = \emptyset$. Suppose $x\in H\cap \intr{\Damu}$ and denote by $v$ the unit inner normal of $H$. Then $H_{x,v}\subseteq H$ and $\bd{H_{x,v}}\cap \intr{\Damu}\neq \emptyset$. Since $\Damu$ is closed and convex, there exists $y\in \intr{H_{x,v}}\cap \Damu$, such that $H_{y,v} \in \half(\Damu)$, and $H_{y,v}\subset H_{x,v}$. Because $\mu$ has contiguous support at $\Damu$ it follows that $\mu(H_{y,v})<\mu(H_{x,v})\leq \mu(H)\leq \alpha$, which is in contradiction with $y\in \Damu$.

\textbf{Part~\ref{smooth and contiguous support case}:} The result follows from parts~\ref{smooth case} and~\ref{contiguous support case}.

\subsection{Proof of Theorem~\ref{theorem:ray basis additional}}

The inclusion ``$\subseteq$" in \eqref{U equality} is a direct consequence of Lemmas~\ref{lemma:basic condition} and~\ref{lemma:basic lemma}. For the other inclusion, suppose for contradiction that $x \in \intr{\Uamu}$ is contained in the right hand side of \eqref{U equality}. Then we can find $H \in \half$ with $x \in H$ and $\mu(\intr{H}) \leq \alpha$, and since both $\intr{H}$ and $\intr{\Uamu}$ are open, there must exist $y \in \intr{\Uamu} \cap \intr{H}$. Denote by $v \in \Sph$ the inner unit normal of $H$. We obtain that $H_{y,v} \subset \intr{H}$, meaning that $\alpha < \D\left(y;\mu\right) \leq \mu\left(H_{y,v}\right) \leq \mu(\intr{H}) \leq \alpha$, a contradiction. We have verified \eqref{U equality}.

If $\mu$ is smooth at $\Uamu$, then for $H(x,F) \in \half(\Uamu,F)$ from the proof of Lemma~\ref{lemma:basic lemma} we have $\mu(H(x,F))=\mu(\intr{H(x,F)})\leq \alpha$ and consequently $H(x,F)\cap \Uamu=\emptyset$, meaning that $F\cap \Uamu=\emptyset$. This holds true for any $x \notin \intr{\Uamu}$ and any face $F$ from $\mathfrak F(x,\Uamu)$. In particular, it must be true for any face $F$ of $\Uamu$ that is not full-dimensional, by considering $x \in \relint{F} \subset \bd{\Uamu}$. That implies $\Uamu \cap \bd{\Uamu} = \emptyset$, meaning that $\Uamu=\intr{\Uamu}$ and $\Uamu$ is open.

\subsection{Proof of Corollary~\ref{corollary:dimension of median}}	\label{section:proof of dimension}

\textbf{Part~\ref{dimension i}:} Suppose for contradiction that $x \in \intr{\median}$. Lemma~\ref{lemma:minimizing halfspace} gives $H \in \half(x)$ with $\mu(\intr{H}) \leq \alpha^*(\mu)$. We can shift $H$ in the direction of its inner normal to obtain $H^\prime \in \half\left(\median\right)$ such that $H^\prime \subset \intr{H}$. Since $\median$ is closed, there exists $y \in \median \cap H^\prime$. The assumption of contiguous support at $\median$ then gives $\D\left(y;\mu\right) \leq \mu(H^\prime) < \mu(\intr{H}) = \alpha^*(\mu)$, which is impossible as $y \in \median$.

\textbf{Part~\ref{dimension ii}:} If $\dim\left(\median\right) = d-1$, for any $x$ not lying in the hyperplane $\aff{\median}$ we have $\median \in \mathfrak{F}(x,\median)$. Theorem~\ref{theorem:ray basis depth regions} gives that there exists a halfspace $H \in \half$ that contains $\median$ in its boundary hyperplane, and $x$ in its interior, with $\mu\left(H\right) = \mu\left(\intr{H}\right) \leq \alpha^*(\mu)$ due to the assumption of the smoothness of $\mu$ at $\median$. Take any $y \notin H$. Again, by Theorem~\ref{theorem:ray basis depth regions} we obtain $G \in \half$ such that $\median \subset \bd{G}$, $y \in \intr{G}$, and $\mu\left(G\right) \leq \alpha^*(\mu)$. The conditions $\dim(\median) = d-1$, $\median \subset \bd{H} \cap \bd{G}$ and $H \ne G$ determine that $H$ and $G$ must be complementary, i.e. $H \cup G = \R^d$. Necessarily, $\mu\left(\R^d\right) = \mu(H) + \mu(G) \leq 2 \alpha^*(\mu)$, meaning that the measure $\mu$ is halfspace symmetric \cite{Zuo_Serfling2000c}. But, by \cite[Theorem~2.1]{Zuo_Serfling2000c} we know that the median of a halfspace symmetric measure is either unique, or $\mu$ is concentrated in an infinite line $L$ in $\R^d$. The former case contradicts $\dim\left(\median\right) = d-1$. In the latter case, also the median set of $\mu$ is contained in $L$, which contradicts our assumption of smoothness of $\mu$.

\subsection{Proof of Corollary~\ref{corollary:ray basis}}	

The first part of the statement follows directly from part~\ref{smooth case} of Theorem~\ref{theorem:ray basis depth regions} with $\alpha = \alpha^*(\mu)$. For the second part, note that due to the contiguous support of $\mu$ at $\median$, $\dim\left(\median\right) < d$ by Corollary~\ref{corollary:dimension of median}. Therefore, $\intr{\median} = \emptyset$, so $(\intr{\median})\compl=\R^d$. To obtain the result, apply Theorem~\ref{theorem:ray basis depth regions} with $x$ replaced by $y \notin \aff{\median}$. That theorem allows to choose $F$ to be $\median \in \mathfrak F\left(y,\median\right)$. We obtain that $\R^d \setminus \aff{\median}$ can be covered by halfspaces $H$ from $\half\left(\median, \median\right)$ with $\mu(H) = \alpha^*(\mu)$, but at the same time $H \in \half\left(\median, \median\right)$ implies $\median \subset \aff{\median} \subseteq \bd{H}$. In particular, we see that also $\aff{\median}$ is covered by our collection of halfspaces, and any point in the median set $x \in \median$ is contained in the boundary of $H$ for each such $H$. We conclude that $\R^d$ can be covered by halfspaces $H$ from $\half(x)$ with $\mu(H) = \D\left(x;\mu\right)$, as desired.

%
%

\subsection{Proof of Example~\ref{example:triangle gap}}	\label{section:triangle gap}

Throughout the proof we adopt the following notation. For $m = 1, 2, \dots$ and points $x_1, \dots, x_m \in \R^2$, by $x_1 \dots x_m$ we mean the polygon with vertices $x_1, \dots, x_m$. For $m = 2$ we obtain a line segment $x_1 x_2$. Where no confusion arises, by $x_1 x_2$ we also denote the length of that line segment. For polygons with non-empty interior, or more generally, for measurable sets $V \subseteq \R^2$ we denote by $\area(V)$ the two-dimensional Lebesgue measure (the area) of $V$. Recall that for $a, b \in \R^2$ we write $L[a,b]$ for the closed line segment between points $a$ and $b$ (that is, $a b$), $L(a,b) = L[a,b]\setminus\{a,b\}$ is the corresponding open line segment, and $l(a,b)$ the infinite line spanned by $a$ and $b$. 

We suppose the distances between the origin $o = \left(0,0\right)\tr$ and the points $x_c$ and $y_c$ take values in ranges $x\in(0,1/4)$ and $y\in (0,1/5)$, respectively. Set $d = \left(0,-1\right)\tr$. Because $\mu$ is a uniform distribution on $S$ with total mass $\area(S)$, the $\mu$-mass of any measurable set $V$ is proportional to $\area(S \cap V)$. For $z\in (0,y]$, $z_c=(0,-z)\tr$, and $\theta \in (-\pi/2, \pi/2]$ denote by $l_{z,\theta}$ the infinite line $l\left(z_c, z_c + \left( \cos\left( \theta - \pi/2 \right), \sin\left( \theta - \pi/2 \right) \right)\tr \right)$ that passes through $z_c$ and determines the angle $\theta$ with line $l(a,d)$. In Figure~\ref{figure:triangle gap Y} the line $l_{z,\theta}$ is drawn for $\theta = \pi/4-3/10$. Any $l_{z,\theta}$ determines two closed halfplanes. We write $H_z^{\theta+}$ for the one that contains $d$, and write $H_z^{\theta-}$ for the complementary closed halfplane. Denote 
	\[	\area_z^{\theta+} = \area(S \cap H_z^{\theta+}) \quad \mbox{ and } \quad \area_z^{\theta-} = \area(S \cap H_z^{\theta-}).	\]
By Lemma~\ref{lemma:minimizing halfspace} a minimizing halfplane of $\mu$ at $z_c$ exists. In order to determine one, we find $\theta$ that minimizes $\min \left\{\area_z^{\theta+},\area_z^{\theta-}\right\}$. The condition $\area_z^{\theta+}+\area_z^{\theta-}=\area(S)$ reduces the problem to the search for the minimum and the maximum of $\area_z^{\theta+}$. Because of the symmetry of the figure $S$, it is enough to consider $\theta\in [0,\pi/2]$.

As depicted in Figure~\ref{figure:triangle gap Y angles}, denote by $o_l$ and $o_r$ the points of intersection of the line $l\left(o,(0,1)\tr\right)$ with $L(a,b)$ and $L(a,c)$, respectively. Computing the lengths of the corresponding line segments we obtain $o o_r=2 d c/3=2/\sqrt{3}$. Using the similarity of triangles, $a x_c:a o=x x_r: o o_r$, which together with $a o=2$ and $a x_c=2-x$ gives 	
	\begin{equation*}	
	x_c x_r = x_c x_l = \frac{2-x}{\sqrt{3}}.	
	\end{equation*}
Further, we obtain $x:y=(o o_r-x_c x_r):(y_c y_r-o o_r)$, and consequently 
	\begin{equation*}	
	y_c y_r = y_c y_l = \frac{y+2}{\sqrt{3}}.
	\end{equation*}

\smallskip
\noindent \textbf{Part (I): $y_c\in \median$ and $y_c$ satisfies the covering property \eqref{ray basis}.}

We start by considering those values of $\theta \in [0,\pi/2]$ that correspond to the change of the behaviour in the formula for the area $\area_y^{\theta+}$. In the considered range for the angle $\theta$, this happens twice: \begin{enumerate*}[label=(\roman*)] \item at $\theta_1$ when the line $l_{y,\theta_1}$ passes through the vertex $c$, see the left hand panel of Figure~\ref{figure:triangle gap Y angles}; and \item at $\theta_2$ when the line $l_{y,\theta_2}$ passes through the point $x_l$, see the right hand panel of Figure~\ref{figure:triangle gap Y angles}. \end{enumerate*} Denote $e=l_{y,\theta}\cap l(a,b)$ and $f=l_{y,\theta}\cap l(b,c)$. Note that $\theta_1$ and $\theta_2$ correspond to $f=c$ and $e=x_l$, respectively. A simple computation yields 
	\begin{equation*}
	\begin{aligned}
	\tan \theta_1 & = d c/y_c d = \sqrt{3}/(1-y), \\
	\tan \theta_2 & = x_c x_l/y_c x_c = (2-x)/(\sqrt{3}(x+y)). 
	\end{aligned}
	\end{equation*}
We see that $\theta_1 \rightarrow \pi/3$ and $\theta_2 \rightarrow \pi/2$ as $x$ and $y$ both decrease to $0$. Thus, for $x$ and $y$ small enough, $\theta_1 < \theta_2$; in particular, it is straightforward to establish that for our choices of $x \in (0,1/4)$ and $y \in (0,1/5)$, we have $\theta_1 < \theta_2$. We distinguish three cases, according to the value of $\theta$, and evaluate the function $\area_y^{\theta+}$ in each situation.

\begin{figure}[htpb]
\includegraphics[width=\twofig\textwidth]{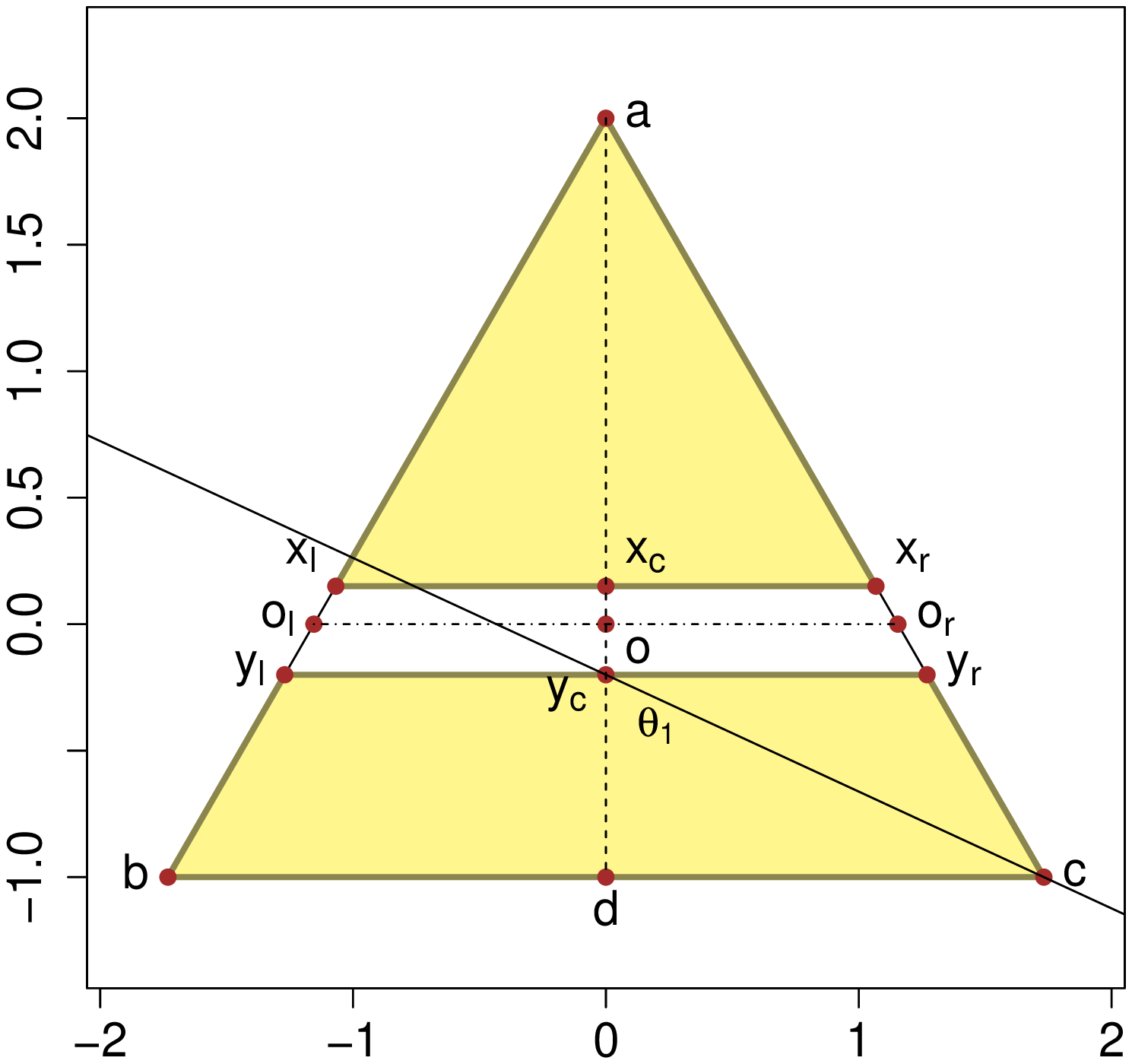} \quad
\includegraphics[width=\twofig\textwidth]{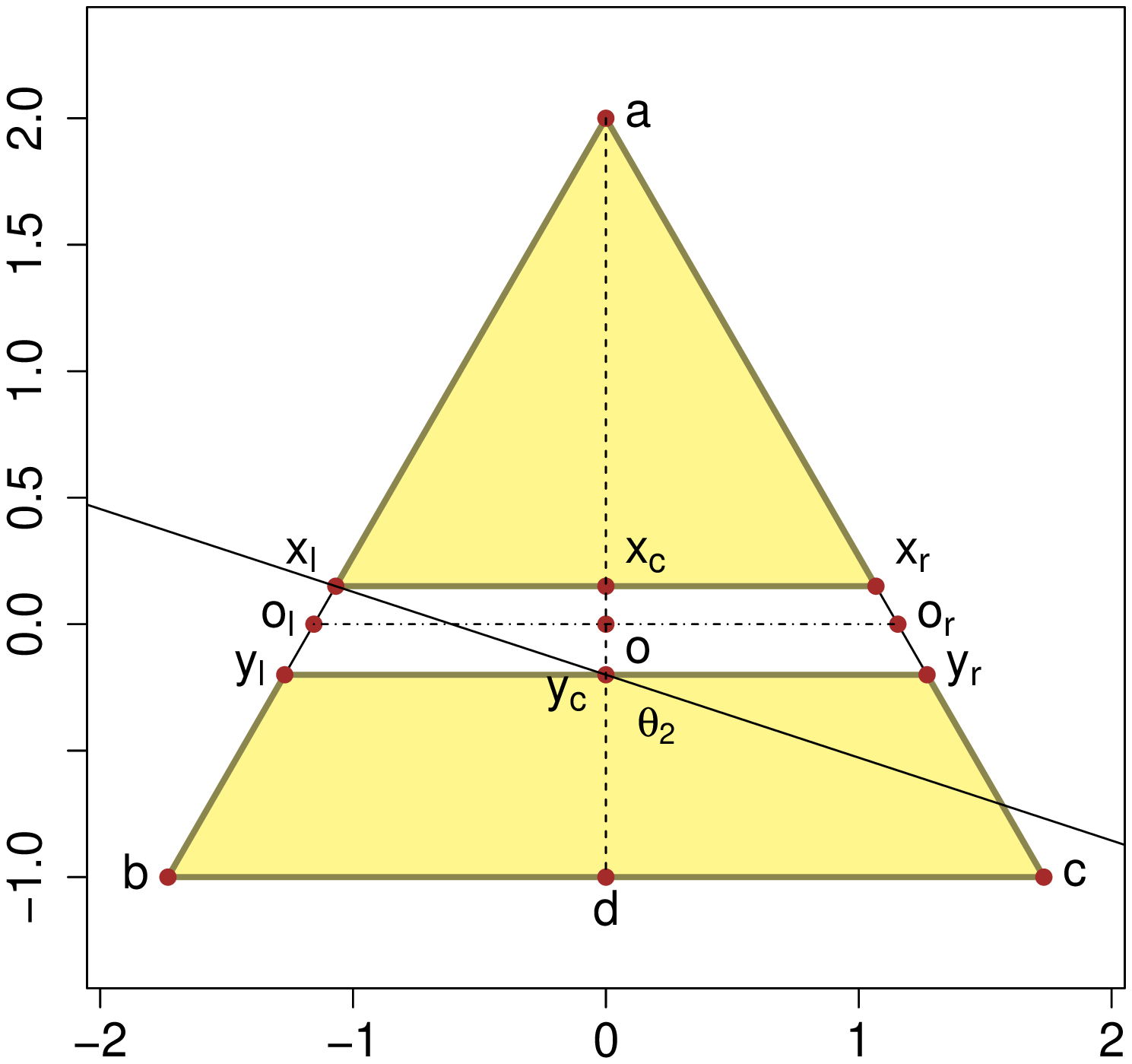}
\caption{Proof of Example~\ref{example:triangle gap}: Angles $\theta_1$ (left panel) and $\theta_2$ (right panel) represent the two values in the interval $[0,\pi/2]$ where the function $\area_y^{\theta+}$ fails to be differentiable.}
\label{figure:triangle gap Y angles}
\end{figure}

\textbf{Case (i): $0 \leq \theta < \theta_1$.}

\begin{figure}[htpb]
\includegraphics[width=\twofig\textwidth]{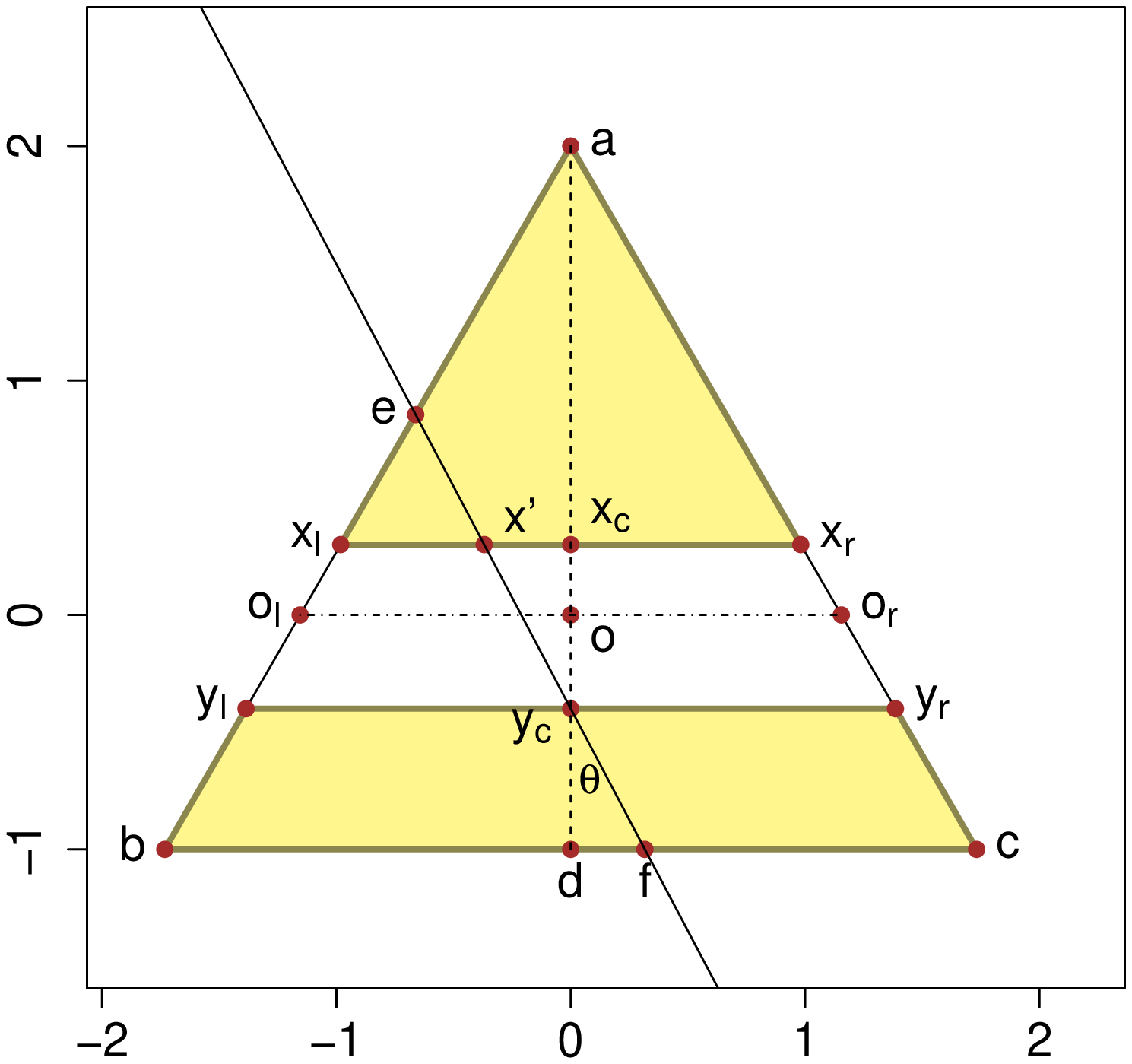} \quad
\includegraphics[width=\twofig\textwidth]{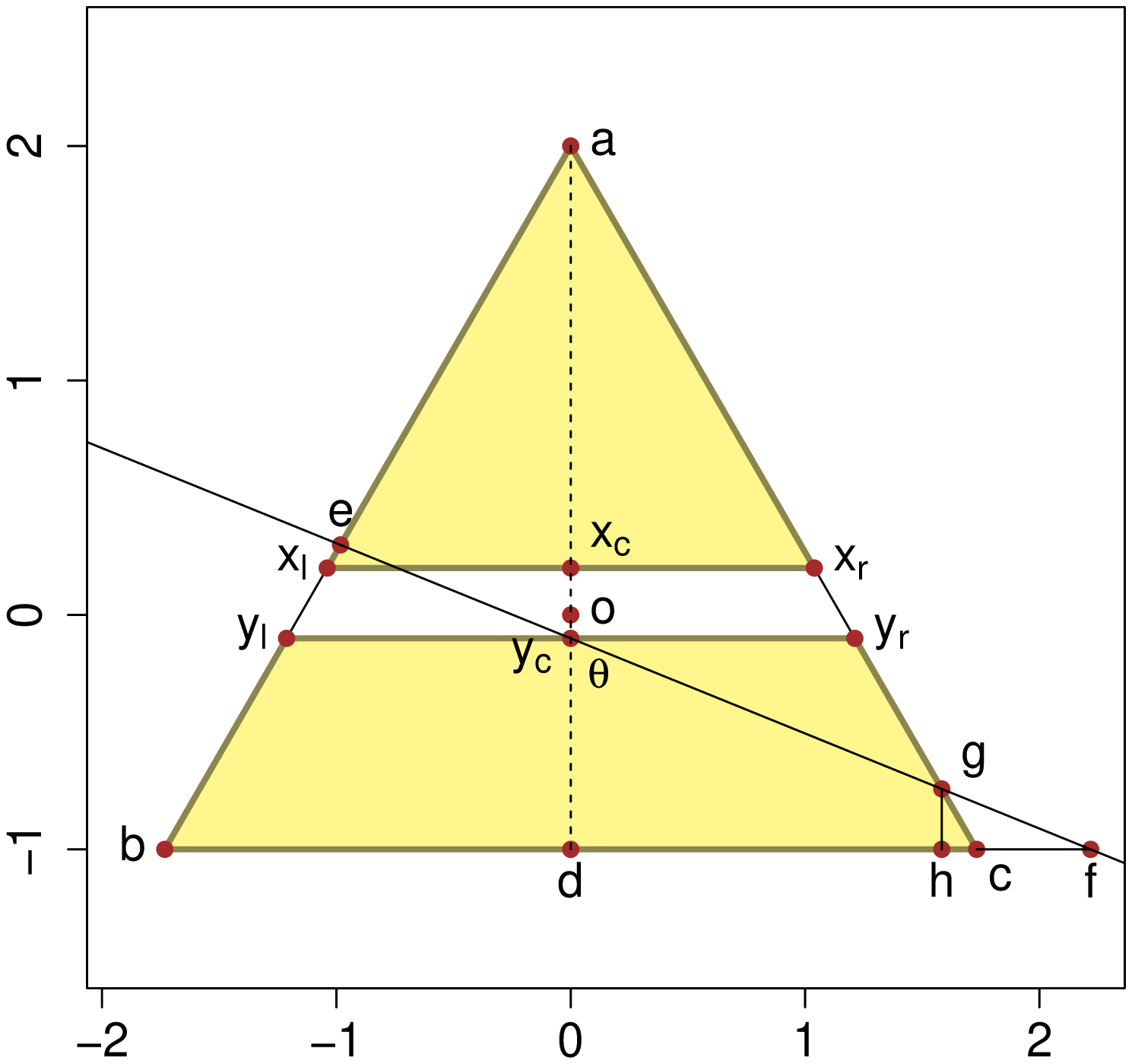}
\caption{Proof of Example~\ref{example:triangle gap}: The situation that corresponds to the Case (i) of Part (I) of the proof (left panel), and Case (ii) of Part (I) of the proof (right panel).}
\label{figure:triangle gap Y}
\end{figure}

For $x'$ the point of intersection of $L(e,f)$ and $L(x_l,x_r)$, we derive $\area_y^{\theta+}$ by computing  
	\begin{equation}\label{surface difference}
	\area_y^{\theta+}=\area(e b f)-\area(x' x_l y_l y_c).
	\end{equation}

Denote $\area_x=\area(o o_r x_r x_c)$ and $\area_y=\area(y_c y_r o_r o)$. Then $\area_x=(x_c x_r + o o_r) \cdot x/2=x (4-x)/(2\sqrt{3})$ and $\area_y=(y_c y_r+o o_r) \cdot y/2=y (4+y)/(2\sqrt{3})$. Denote by $h$ the length of the normal from $e$ to line $l(y_l,y_c)$. We compute $h:(h+1-y)=y_c y_l:b f$. On the other hand, $b f=b d+ d f=\sqrt{3}+(y_c d) \tan{\theta}=\sqrt{3}+(1-y)\tan{\theta}$. From here, we get $h=(y+2)/(1+\sqrt{3}\tan{\theta})$, and consequently 
	\begin{equation}\label{big triangle}
	\area(e b f)=(h+1-y)\frac{b f}{2}=\frac{(\sqrt{3}+(1-y)\tan{\theta})^2}{2(\frac{1}{\sqrt{3}}+\tan{\theta})}.
	\end{equation}

Since $x_c y_c =x+y$ and the angle between $l(y_c,x')$ and $l(y_c,x_c)$ equals $\theta$, it holds true that $\area(x_c x' y_c)=(x+y)^2 \tan{\theta}/2$ and
	\begin{equation}\label{gap}
	\area(x' x_l y_l y_c)=\area_x+\area_y-\area(x_c x' y_c)=\frac{(4-x)x}{2\sqrt{3}}+\frac{(4+y)y}{2\sqrt{3}}-\frac{\tan{\theta}}{2}(x+y)^2.
	\end{equation}

Finally, substituting~\eqref{big triangle} and~\eqref{gap} into~\eqref{surface difference} gives
	\begin{equation*}
	\area_y^{\theta+}=\frac{(\sqrt{3}+(1-y)\tan{\theta})^2}{2(\frac{1}{\sqrt{3}}+\tan{\theta})}-\frac{2}{\sqrt{3}}(x+y)-\frac{1}{2\sqrt{3}}(y^2-x^2)+\frac{\tan{\theta}}{2}(x+y)^2.
	\end{equation*}
Now we take $t={1}/{\sqrt{3}}+\tan{\theta}$, and substitute $\tan{\theta}$ by $t-1/{\sqrt{3}}$ in the last equation. Consolidating the terms with $t$, we get
	\begin{equation}	\label{area theta}
	\area_y^{\theta+}=\frac{1}{\sqrt{3}}(2+y)(1 - x - 2y)+F_1(t)
	\end{equation}
where 
	\begin{equation*}
	F_1(t)=\frac{\left(2+y\right)^2}{6}\frac{1}{t}+\left((1-y)^2+(x+y)^2\right)\frac{t}{2}.
	\end{equation*}
The derivative of $F(t)$ is given by 
	\begin{equation*}
	F_1'(t)=\frac{(1-y)^2+(x+y)^2}{2}-\frac{\left(2+y\right)^2}{6}\frac{1}{t^2}.
	\end{equation*}
Condition $F_1'(t)=0$ gives $t^2={\left(2+y\right)^2}/{\left(3\left((1-y)^2+(x+y)^2\right)\right)}$. Denote 
	\begin{equation*}
	t_m=\frac{2+y}{\sqrt{3\left((1-y)^2+(x+y)^2\right)}}.
	\end{equation*}
The derivative $F_1'(t)$ is negative for $t<t_m$ and positive for $t>t_m$. Therefore, the function $F_1(t)$ decreases with $t$ until $t=t_m$, and then increases again, so it is only left to check whether $t_m<\tan{\theta_1}+1/\sqrt{3}$. In order to do so, we should verify
	\begin{equation*}
	\frac{2+y}{\sqrt{3\left((1-y)^2+(x+y)^2\right)}}<\frac{\sqrt{3}}{1-y}+\frac{1}{\sqrt{3}}=\frac{4-y}{\sqrt{3}(1-y)},
	\end{equation*}
which can be rewritten as
	\begin{equation}\label{condition for minimum theta}
	\frac{2+y}{\sqrt{1+\left(\frac{x+y}{1-y}\right)^2}}<4-y.
	\end{equation}
Since the left hand side of the last equation is smaller than $2+y$, it is enough to show $2+y<4-y$, which is satisfied because we started with $y<1/5$. We conclude that $F_1(t)$ attains its minimum value at $t=t_m$, or equivalently, at $\tan{\theta_{\min}}=t_m-1/\sqrt{3}$. The appropriate minimum area is given by
	\begin{equation} \label{minimum area case i}
	\area_{y}^{\theta_{\min}+} = \frac{1}{\sqrt{3}}(2+y)(1 - x - 2y)+\frac{1}{\sqrt{3}}(2+y)\sqrt{(1-y)^2+(x+y)^2}.
	\end{equation}

\textbf{Case (ii): $\theta_1 \leq \theta < \theta_2$.}


Now $e\in L[a,x_l]$ and $f\notin L(d,c)$, see the right hand panel of Figure~\ref{figure:triangle gap Y}. Denote $g=L[y_c,f] \cap L[a,c]$. In order to get the formula for $\area_y^{\theta+}$ in this case, one simply subtracts $\area(g c f)$ from formula~\eqref{area theta}. Denote by $h$ the orthogonal projection of $g$ onto $l(d,f)$. Then $\tan\theta=f d/y_c d=(f c+\sqrt{3})/(1-y)$, so $f c=(1-y)\tan\theta-\sqrt{3}$, and $h c=h g/\sqrt{3}$. At the same time, $\tan\theta=h f/h g=(h c + c f)/h g=c f/h g+1/\sqrt{3}$, implying $h g=c f/(\tan\theta-1/\sqrt{3})$. Now we may calculate $\area(g c f)$ as $c f\cdot h g/2$, which, after the substitution $t= 1/\sqrt{3} + \tan \theta$, gives
	\begin{equation*}
	\area(g c f)=\frac{(1-y)^2}{2}t+\frac{(2+y)^2}{6}\frac{1}{t-\frac{2}{\sqrt{3}}}-\sqrt{3}(1-y).
	\end{equation*}
For $\theta=\theta_1$ one gets $\area(g c f)=0$, as expected. Finally, we derive 
	\begin{equation}	\label{area case ii}
	\area_y^{\theta+}=\frac{1}{\sqrt{3}}(5-6y-2y^2-2x-x y)+F_2(t),
	\end{equation}
where
	\begin{equation*}
	F_2(t)=(x+y)^2\frac{t}{2}+\frac{\left(2+y\right)^2}{6}\left(\frac{1}{t}-\frac{1}{t-\frac{2}{\sqrt{3}}}\right).
	\end{equation*}
The above formula holds true for $\tan \theta_1+1/\sqrt{3}\leq t < \tan\theta_2+1/\sqrt{3}$. By introducing $s=t-\tan\theta_1-1/\sqrt{3}\geq 0$, we may write 

	\begin{equation} \label{analogy1}
	F_2(t)=\frac{(x+y)^2}{2\sqrt{3}}\frac{4-y}{1-y}+G(s),
	\end{equation}
where 
	\begin{equation}	\label{analogy2}
	G(s)=\frac{(x+y)^2}{2}s-\frac{\left(2+y\right)^2}{3\sqrt{3}}\frac{1}{(s+\frac{\sqrt{3}}{1-y})^2-\frac{1}{3}}.	
	\end{equation} 
One may investigate $G(s)$ instead of $F_2(t)$, because $t\mapsto s$ is a simple shift by a constant. The derivative of $G(s)$,
	\begin{equation}\label{derivative of substitute}
	G'(s)=\frac{(x+y)^2}{2}+\frac{2(2+y)^2}{3\sqrt{3}}\frac{1}{((s+\frac{\sqrt{3}}{1-y})^2-\frac{1}{3})^2}\left(s+\frac{\sqrt{3}}{1-y}\right),
	\end{equation}
is always positive for $s\geq 0$, so $\area_y^{\theta+}$ grows with $\theta$ for $\theta_1 \leq \theta < \theta_1$.

\textbf{Case (iii): $\theta_1 \leq \theta \leq \pi/2$.}

As $e\in L\left[x_l,y_l\right]$, $\area_y^{\theta+}$ obviously grows with increasing $\theta$.

\smallskip
\noindent\textbf{Summary of Part (I).}

Now that we established the behaviour of the mapping $\theta\mapsto \area_y^{\theta+}$, we are able to find its extreme values. The value of $\area_y^{\theta+}$ decreases for $0 \leq \theta < \theta_{\min}$ and increases for $\theta_{\min} < \theta \leq \pi/2$, so it attains minimum value at $\theta=\theta_{\min}$, and maximum value at $\theta=0$ or $\theta=\pi/2$. Therefore, the depth of the point $y_c$ with respect to $\mu$ is equal to $\min \left\{\area_{y}^{\theta_{\min}+}, \area_{y}^{0-}, \area_{y}^{\pi/2-}\right\}$. Note that $\area_{y}^{0-} = \lambda(S)/2$ and $\area_{y}^{\pi/2-}= \area(a x_l x_r)$. As $x,y\rightarrow 0$ from the right, $\area(a x_l x_r)$ goes to $4/9 \cdot \area(a b c)$, and $\area(S)$ goes to $\area(a b c)$. In particular, it is easy to show that for $x \in (0,1/4)$ and $y\in(0,1/5)$ we have $\area_{y}^{\pi/2-} < \area_{y}^{0-}$. In what follows we show that it is possible to choose $x$ and $y$ such that $\area_{y}^{\pi/2-}=\area_{y}^{\theta_{\min}+}$.

Note that $\area_{y}^{\pi/2-}=\area(a x_l x_r)=x_c x_r\cdot a x_c=(2-x)^2/\sqrt{3}$. Therefore, the condition $\area_{y}^{\pi/2-}=\area_{y}^{\theta_{\min}+}$ is, using \eqref{minimum area case i}, equivalent with
	\begin{equation*}
	f(x,y)=-2-3y-2y^2+2x-x^2-x y+(2+y)\sqrt{(1-y)^2+(x+y)^2}=0.
	\end{equation*}
Consider $x \in (0,1/4)$ fixed. As $y\rightarrow 0$, $f(x,y)$ converges to $f_0(x)=(\sqrt{2x}+\sqrt{1+x^2}-1)(\sqrt{2x}+1-\sqrt{1+x^2})$. To show that $f_0(x)>0$ for any choice of $x \in (0,1/4)$, we should prove $\sqrt{2x}+1>\sqrt{1+x^2}$, which is equivalent with $2x+2\sqrt{2x}-x^2>0$. The latter inequality follows from $2x+2\sqrt{2x}-x^2=2\sqrt{2x}+x(2-x)>0$. In the case $y\rightarrow 1/5$, $f(x,y)$ converges to $f_{1/5}(x)=-67/25 + 9x/5 - x^2 + 11 \sqrt{15/25+\left(x + 1/5\right)^2}/5 \leq -67/25 + 9x/5 + 11 \sqrt{15/25+\left(x + 1/5\right)^2}/5 < 0$ for each $x \in (0,1/4)$. Because the mapping $y\mapsto f(x,y)$ is continuous in $y$ for each fixed $x$, the intermediate value theorem \cite[Problem~2.2.14(d)]{Dudley2002} allows us to conclude the existence of $y \in (0,1/5)$ such that $f(x,y)=0$ for each $x \in (0,1/4)$.

Finally, from all the considerations above, we conclude that for each $x<1/4$, there exists $y$ such that $\D(y_c;\mu)=\area(a x_l x_r)$ and there are three minimizing halfspaces of $\mu$ at $y_c$, being $H_{y}^{\theta_{\min}+}$, $H_{y}^{-\theta_{\min}+}$ and $H_{y}^{\pi/2-}$. Obviously, those halfspaces cover the whole $\R^2$, so $y_c$ satisfies the covering property \eqref{ray basis}, and by part~\ref{direct ray basis} of Theorem~\ref{theorem:standard ray basis}, $y_c$ must be a median of $\mu$. The course of the function $\area_{y}^{\theta+}$ can be observed in Figure~\ref{figure:lambda functions}.

\begin{figure}[htpb]
\includegraphics[width=\twofig\textwidth]{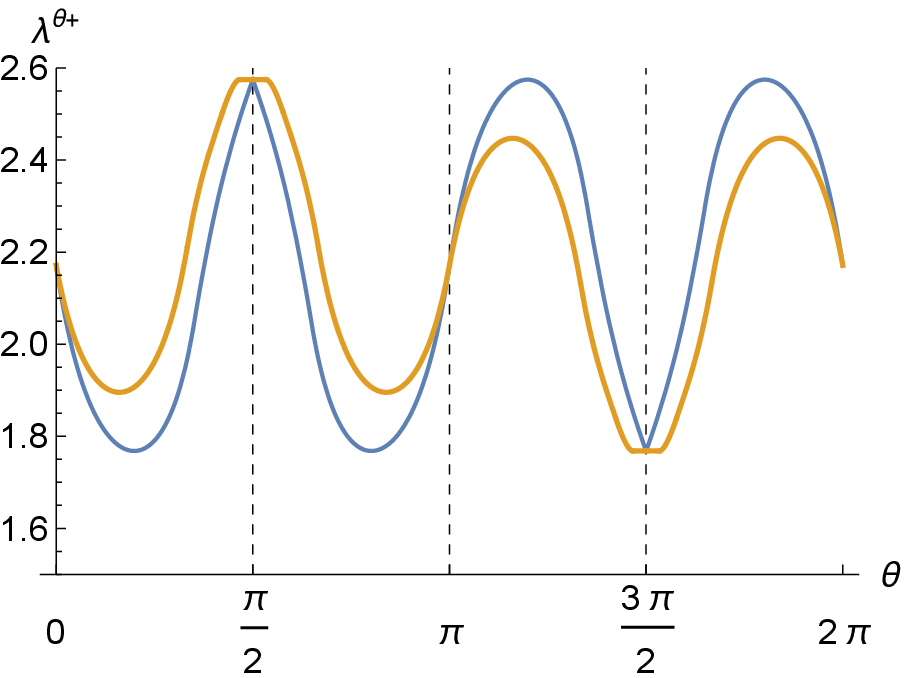} \quad
\includegraphics[width=\twofig\textwidth]{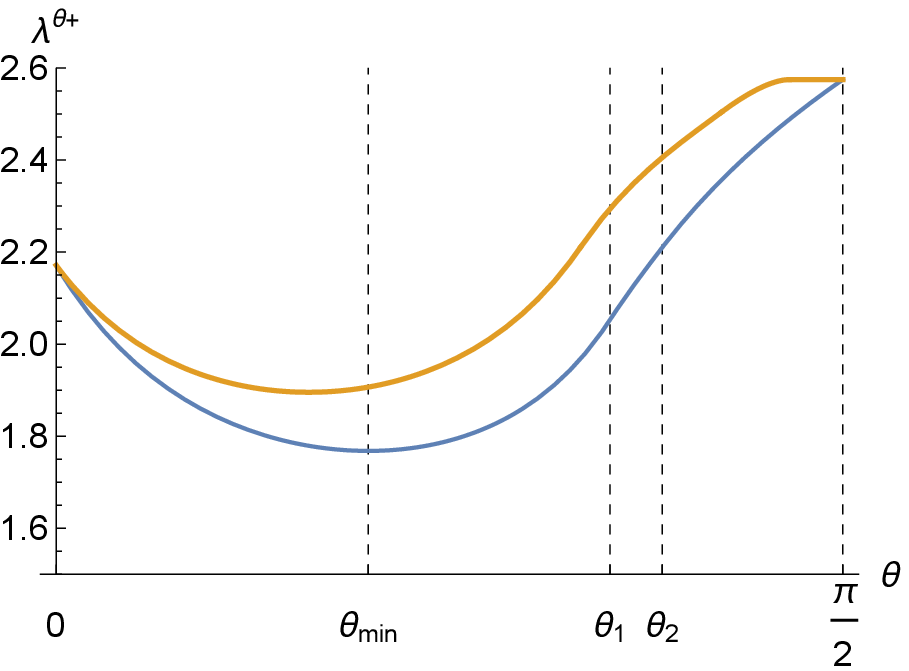}
\caption{Example~\ref{example:triangle gap}: Functions $\area_y^{\theta+}$ (blue curve) and $\area_0^{\theta+}$ (thick orange curve) on their full domain $[0,2\pi]$ (left panel) and the same functions zoomed into the interval $[0,\pi/2]$ (right panel). Both functions attain the same minimum value. On the right hand panel, the point $\theta_{\min}$ where $\area_y^{\theta+}$ attains its minimum, along with the points $\theta_1$ and $\theta_2$ of non-differentiability of function $\area_y^{\theta+}$ are marked by dashed vertical lines.}
\label{figure:lambda functions}
\end{figure}

\smallskip
\noindent\textbf{Part (II): $\dim\left(\median\right) = 2$.}

We first show that there exists a point $z_c = \left(0, -z\right) \tr \ne y_c$ in $\R^2$ that is also a halfspace median of $\mu$. For that reason, we use the results from Part (I) to calculate $\area_z^{\theta+}$. We consider only $x$ and $y$ from Part (I) that satisfy $\area_{y}^{\pi/2-}=\area_{y}^{\theta_{\min}+}=\D(y_c;\mu)=\alpha^*(\mu)$. 

Denote $q = x_r y_l \cap x_l y_r$ and consider point $z_c=(0,-z)\tr$ for $z\in (0,y)$ on the line segment $L(q, y_c)$, see the left panel of Figure~\ref{figure:triangle gap Z}. Because $z_c \in L(q,y_c)$, the line $l(x_l,z_c)$ intersects $L[y_c,y_r]$. Denote $\delta=y-z>0$ and $y' = l(x_l, z_c) \cap L[y_c,y_r]$.

\begin{figure}[htpb]
\includegraphics[width=\twofig\textwidth]{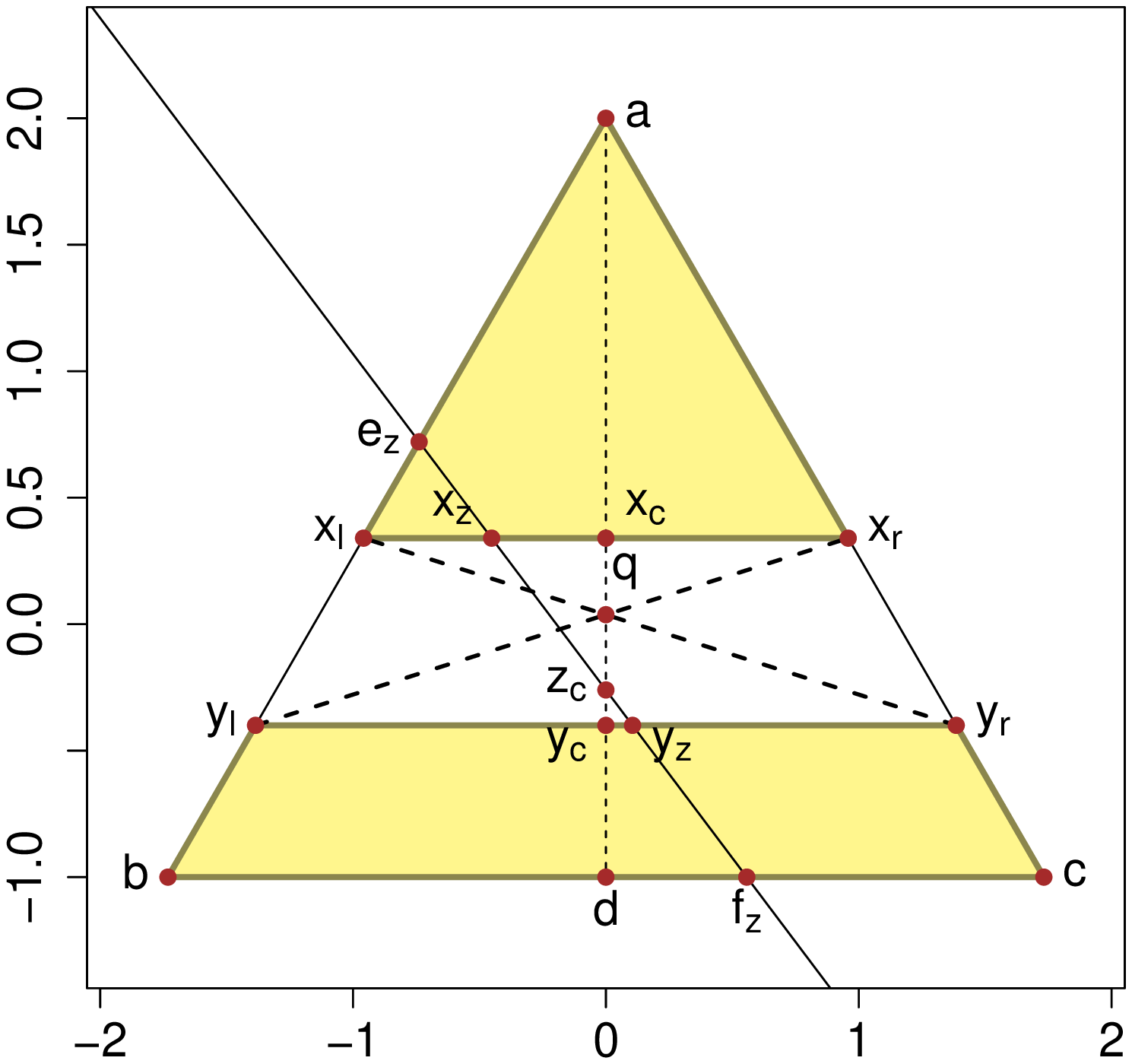} \quad
\includegraphics[width=\twofig\textwidth]{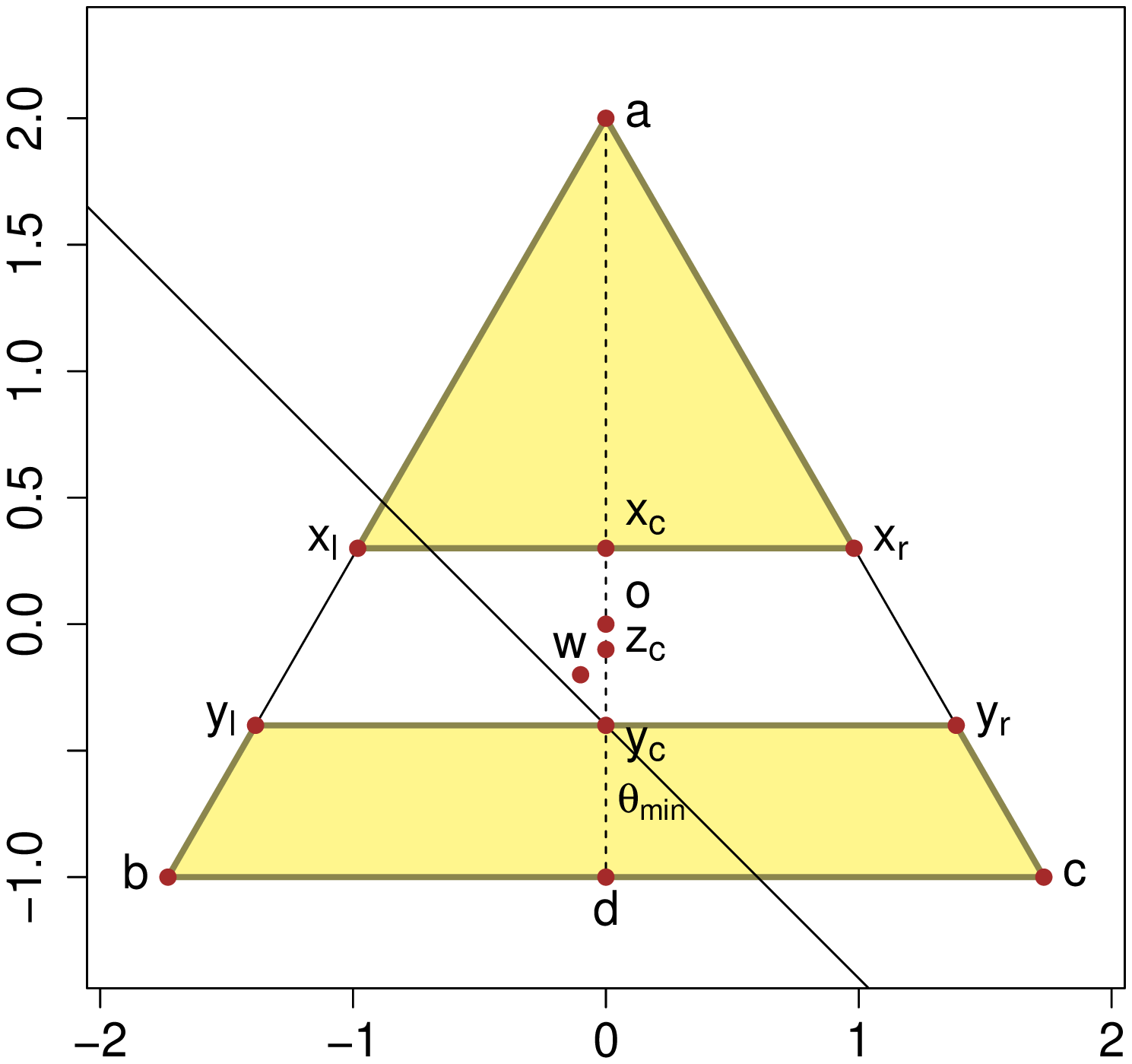}
\caption{Proof of Example~\ref{example:triangle gap}: A scheme of the configuration that corresponds to Part (II) of the proof, with point $z_c \in \median$ displayed (left panel), and the region where the additional median $w$ is searched for (right panel).}
\label{figure:triangle gap Z}
\end{figure}

Denote $x_z=l_{z,\theta}\cap l(x_l,x_r)$, $y_z=l_{z,\theta}\cap l(y_l,y_r)$, $e_z=l_{z,\theta}\cap l(a,b)$ and $f_z=l_{z,\theta}\cap l(b,c)$. Consider angles $\widetilde{\theta_1}$, $\widetilde{\theta_2}$ and $\widetilde{\theta_3}$ corresponding to cases $f_z=c$, $e_z=x_l$ and $y_z=y_r$, respectively. It is not difficult to calculate 
	\begin{equation*}
	\begin{aligned}
	\tan \widetilde{\theta_1}& = d c/z_c d = \sqrt{3}/(1-z), \\
	\tan \widetilde{\theta_2}& = x_c x_l/z_c x_c = (2-x)/(\sqrt{3}(x+z)), \\
	\tan \widetilde{\theta_3}& = y_c y_r/y_c z_c = (y+2)(\sqrt{3}\delta). 
	\end{aligned}
	\end{equation*}
If $\delta$ is small enough, then $\widetilde{\theta_1}<\widetilde{\theta_2}<\widetilde{\theta_3}$. We consider four different cases and calculate $\area_z^{\theta+}$.

For $0\leq \theta < \widetilde{\theta_1}$, one obtains $\area_z^{\theta+}$ by substituting $y$ by $z$ in \eqref{area theta}, and subsequently subtracting $\area(y_l y_z z_c z_l)$, where $z_l = l_{z,\pi/2} \cap l(a,b)$. Note that $\area(z_c y_c y_z)= \delta^2 \tan\theta/2$ and $\area(y_l y_c z_c z_l)=(4+y+z)\delta/(2\sqrt{3})$, so 
	\begin{equation}	\label{area gap}
	\area(y_l y_z z_c z_l) = \frac{\delta^2}{2}t+\frac{4+y+z}{2\sqrt{3}}\delta-\frac{1}{2\sqrt{3}}\delta^2,	
	\end{equation}
where $t=1/\sqrt{3} + \tan\theta$.

\textbf{Case (i): $0 \leq \theta < \widetilde{\theta_1}$.}

The previous calculation leads to
	\begin{equation*}
	\area_z^{\theta+}=\frac{1}{\sqrt{3}}\left(2-3z-2z^2-2x-x z-2\delta-\frac{y+z}{2}\delta+\frac{\delta^2}{2}\right)+\widetilde{F}_1(t),
	\end{equation*}
where
	\begin{equation*}
	\widetilde{F}_1(t)=\frac{\left(2+z\right)^2}{6}\frac{1}{t}+\left((1-z)^2+(x+z)^2-\delta^2 \right)\frac{t}{2}.
	\end{equation*}
For $\delta$ small, $\widetilde{F}_1(t)$ behaves similarly to $F_1(t)$. In that case $\area_z^{\theta+}$ decreases with $\theta$ for small positive values of $\theta$, reaches its minimum value at
	\begin{equation*}
	\tan \widetilde{\theta}_{\min}=\frac{2+z}{\sqrt{3\left((1-z)^2+(x+z)^2-\delta^2\right)}}-\frac{1}{\sqrt{3}},
	\end{equation*}
and then increases again as $\theta$ grows. We also check $\tan \widetilde{\theta}_{\min}<\tan\widetilde{\theta_1}$ in the same way as we did in Part~(I) of this proof --- analogously to \eqref{condition for minimum theta}, we should verify 
	\[	\frac{2+z}{\sqrt{1+\left(\frac{x+z}{1-z}\right)^2-\frac{\delta^2}{(1-z)^2}}}<4-z.	\] 
For $\delta$ positive small enough the left hand side of the previous display is smaller than $2+z$, and we may conclude $\widetilde{\theta}_{\min}<\widetilde{\theta_1}$.

\textbf{Case (ii): $\widetilde{\theta_1} \leq \theta < \widetilde{\theta_2}$.}

In this situation, analogously to Case (i) we substitute $y$ by $z$ in \eqref{area case ii} and subtract \eqref{area gap} to obtain
	\begin{equation*}
	\area_z^{\theta+}=\frac{1}{\sqrt{3}}(5-6z-2z^2-2x-x z-2\delta-\frac{z+y}{2}\delta+\frac{\delta^2}{2})+\widetilde{F}_2(t),
	\end{equation*}
where
	\begin{equation*}
	\widetilde{F}_2(t)=\left((x+z)^2-\delta^2\right)\frac{t}{2}+\frac{\left(2+z\right)^2}{6}\left(\frac{1}{t}-\frac{1}{t-\frac{2}{\sqrt{3}}}\right).
	\end{equation*}
We introduce again $s=t-\tan\widetilde{\theta_1}-1/\sqrt{3} \geq 0$ and investigate $\widetilde{G}(s)$, analogously as in \eqref{analogy1} and \eqref{analogy2}, instead of $\widetilde{F}_2(t)$. As in \eqref{derivative of substitute}, we obtain the derivative of $\widetilde{G}(s)$ of the form
	\begin{equation*}
	\widetilde{G}'(s)=\frac{(x+z)^2 - \delta^2}{2}+\frac{2(2+z)^2}{3\sqrt{3}} \left(\left(s+\frac{\sqrt{3}}{1-z}\right)^2-\frac{1}{3}\right)^{-2}\left(s+\frac{\sqrt{3}}{1-z}\right), 
	\end{equation*}
which is positive for small $\delta$, meaning that $\area_z^{\theta+}$ increases as a function of $\theta \in \left[ \widetilde{\theta_1}, \widetilde{\theta_2} \right)$.

\textbf{Case (iii): $\widetilde{\theta_2} \leq \theta < \widetilde{\theta_3}$.}

For these values of the parameter, it is easy to see that $\area_{z}^{\theta+}$ grows with increasing $\theta$, for the same reason as in Part (I) of this proof.

\textbf{Case (iv): $\widetilde{\theta_3} \leq \theta \leq \frac{\pi}{2}$.}

In this range for $\theta$, the function $\area_z^{\theta+}$ is clearly constant, and equal to $\area(y_r y_l b c)$.

\smallskip
\noindent\textbf{Summary of Part (II).}

We conclude our analysis by observing that $\area_z^{\theta+}$ attains its minimum at $\theta=\widetilde{\theta}_{\min}$, and its maximum at $\theta=\pi/2$. Because $\widetilde{\theta}_{\min} \in [0, \widetilde{\theta_1})$, $y_c$ lies in the interior of $H_{z}^{\widetilde{\theta}_{\min}+}$ and therefore $\area_{z}^{\widetilde{\theta}_{\min}+} > \area_{y}^{\theta_{\min}+}=\area_{y}^{\pi/2-} = \alpha^*(\mu)$. On the other hand, $\area_{z}^{\pi/2-}=\area_{y}^{\pi/2-}=\alpha^*(\mu)$. Because by Part (I) we know that $y_c$ is a median of $\mu$, this means that $z_c$ and $y_c$ have the same depth, and the set $\median$ contains at least two distinct points $z_c$ and $y_c$. The minimizing halfspaces of $\mu$ at $z_c$ are those determined by the angle $\theta \in [-\pi/2,-\widetilde{\theta_3}] \cup [\widetilde{\theta_3},\pi/2]$. Therefore, it is impossible to cover $\R^2$ with minimizing halfspaces of $\mu$ at point $z_c$, meaning that $z_c\in \median$ is a point that fails to satisfy the covering property \eqref{ray basis}.

Due to the convexity of the median set $\median$ we already know that the line segment $L[y_c,z_c]$ is contained in $\median$. To see that the set $\median$ is in fact two-dimensional, now we find an additional median of $\mu$ of the form $w=(-\varepsilon,\tau)\tr \in \left(\left(-\infty,0\right) \times (-y, -z) \right) \setminus H_{y}^{\theta_{\min}+}$, see the right hand panel of Figure~\ref{figure:triangle gap Z}. The smoothness of $\mu$ implies that for $\tau$ fixed, $\inf \left\{\mu(H) \colon H \in \half(w) \mbox{ and } \{y_c,z_c\}\subset H\compl \right\} \to 1/2$ as $\varepsilon \to 0$. Since $\D\left(w;\mu\right) \leq \alpha^*(\mu)<1/2$ we obtain that for any $\tau$ there exists $\varepsilon$ small enough so that each minimizing halfspace of $\mu$ at $w$ has to contain at least one of the points $y_c$ and $z_c$. Each halfspace $H \in \half(w)$ whose boundary passes through $w$ that contains either $y_c$ or $z_c$, however, must have $\mu$-mass at least $\alpha^*(\mu)$, since both $y_c$ and $z_c$ belong to $\median$. Therefore, for any $\tau \in (-y,-z)$ and all $\varepsilon$ small enough, we conclude that the point $w = \left( - \varepsilon, \tau \right)\tr$ is contained in the median set $\median$, and the latter set must therefore be two-dimensional.

\smallskip
\noindent \textbf{Part (III): The single point that satisfies the covering property \eqref{ray basis} is $y_c$.}

In Part (I) of this proof we demonstrated that $y_c$ satisfies \eqref{ray basis}. The fact that no other point in $\R^2$ shares that property is a consequence of Theorem~\ref{unique covering median} proved in Section~\ref{section:covering median}, whose assumptions are satisfied by $y_c$.

\subsection{Proof of Theorem~\ref{theorem:intersection}}
First note that all the considered sets are convex, meaning that each of them is of dimension $d$ if and only if its interior is non-empty \cite[Theorem~1.1.13]{Schneider2014}. The equality $\Ucamu = \intr{\Uamu}$ follows from Theorem~\ref{theorem:ray basis additional}; the interior of any set is a subset of its closure, which implies $\intr{\Uamu} \subseteq \cl{\Uamu}$; the floating body $\UaFBmu$ is an intersection of a larger collection of halfspaces than the depth region $\Damu$, meaning that $\UaFBmu \subseteq \Damu$. We prove the remaining non-trivial statements of the proposition in several steps. 

\textbf{Inclusion $\cl{\Uamu}\subseteq \UaFBmu$.} Suppose that there exists $x\in \Uamu \setminus \UaFBmu$. Since $x\notin \UaFBmu$, there must exist $H_{y,v} \in \half$ such that $x\notin H_{y,v}$ and $ \mu(H_{y,v}\compl)\leq \alpha$. Its complement $H_{y,v}\compl$ is an open halfspace whose boundary passes through $y$ and has inner normal $-v$, i.e. $H_{y,v}\compl=\intr{H_{y,-v}}$. We know that $x\in \intr{H_{y,-v}}$ and $\mu(\intr{H_{y,-v}})=\mu(H_{y,v}\compl)\leq \alpha$. Therefore, $H_{x,-v}\subset \intr{H_{y,-v}}$ and consequently $\mu(H_{x,-v})\leq \mu(\intr{H_{y,-v}})\leq  \alpha$, which is in contradiction with $\D(x;\mu) > \alpha$, as well as $x\in \Uamu$. We conclude that $\Uamu\subseteq \UaFBmu$ and consequently $\cl{ \Uamu}\subseteq \UaFBmu$, because $\UaFBmu$ is closed.

\textbf{Part~\ref{inclusions null}: Inclusion $\Ucamu\subseteq\left\{x\in\R^d\colon \D^\circ(x;\mu)>\alpha\right\}$.} Pick $x \in \Ucamu$, and suppose that $\D^\circ\left(x;\mu\right) = \beta \leq \alpha$. Analogously to Lemma~\ref{lemma:minimizing halfspace} it is possible to show that if $\D^\circ\left(x;\mu\right) = \beta$, there must exist $G \in \half(x)$ such that $\mu\left(\intr{G}\right) = \beta$. Define $H = \left(\intr{G}\right)\compl \in \half$ and note that $x \in \bd{H}$ and $\mu\left(H\compl\right) = \mu(\intr{G}) = \beta \leq \alpha$. We reached a contradiction, since for such a halfspace $H$ the point $x$ must be contained in $\intr{H}$.

\textbf{Part~\ref{inclusions null}: Inclusion $\left\{x\in\R^d\colon \D^\circ(x;\mu)>\alpha\right\}\subseteq\Ucamu$.} If $\D^\circ(x;\mu)>\alpha$, then for each $H\in \half$ that contains $x$ it has to be $\mu(\intr{H})>\alpha$ and therefore $x\notin \left(\Ucamu\right)\compl$.

\textbf{Part~\ref{inclusions first}: Inclusion $\UaFBmu \subseteq \cl{\Uamu}$ if $\intr{\Uamu}\neq \emptyset$.} Note that $\intr{\Uamu}\subseteq \cl{\Uamu}\subseteq \UaFBmu$, meaning that $\intr{\UaFBmu}\neq \emptyset$. Therefore, if $\UaFBmu \setminus \cl{\Uamu}$ is non-empty, there must exist $x\in \intr{\UaFBmu}\setminus \cl{\Uamu}$. To see this, denote by $y$ any point in $\UaFBmu \setminus \cl{\Uamu}$. Since $\UaFBmu$ is full-dimensional and convex, there exists an open ball $B_1 \subset \UaFBmu$ \cite[Theorem~1.1.13]{Schneider2014}, and $\conv{B_1\cup\{y\}} \subset \UaFBmu$. Since $\cl{\Uamu}$ is a closed set that does not contain $y$, there exists an open ball $B_2$ containing $y$ that does not intersect $\cl{\Uamu}$. The set $\conv{B_1\cup\{y\}} \cap B_2$ is then a convex, full-dimensional subset of $\UaFBmu \setminus \cl{\Uamu}$, and as such has to contain a point in its interior $x \in \intr{\UaFBmu} \setminus \cl{\Uamu}$, as we needed to show. Now, because $x\notin \Uamu$, Lemma~\ref{lemma:minimizing halfspace} gives that there exists $H \in \half(x)$ such that $\mu(\intr{H}) \leq \D(x;\mu) \leq \alpha$. Denote by $H_x=(\intr{H})\compl$ the closed halfspace that satisfies $\mu(H_x\compl)=\mu(\intr{H}) \leq \alpha$. We obtain $\UaFBmu\subseteq H_x$, which is in contradiction with $x \in \intr{\UaFBmu}$.

\textbf{Part~\ref{inclusions second}: $\intr{\Damu} = \intr{\Uamu}$.} Under the considered condition of $\mu$ having contiguous support at $\Damu$ the result follows by Theorem~\ref{theorem:ray basis additional} and part~\ref{contiguous support case} of Theorem~\ref{theorem:ray basis depth regions}. 

\textbf{Part~\ref{inclusions floating body}: $\UaFBmu = \Damu$.} In dimension $d = 1$, the equality of the two expressions is straightforward to verify by rewriting the conditions defining both $\UaFBmu$ and $\Damu$ in terms of the function $F(t) = \mu\left( [t, \infty) \right)$, for $t \in \R$. The general statement follows by projecting $\mu$ via $\pi_u \colon \R^d \to \R \colon y \mapsto \left\langle y, u \right\rangle$, for $u \in \Sph$, into its pushforward measure $\mu_u \in \Meas[\R]$. The image of a halfspace $H_{x,u}$ by $\pi_u$ is the interval $[\left\langle x, u \right\rangle , \infty )$. Knowing the equality of the two concepts of central regions after projecting into $\R$, it is enough to realise that $\Damu = \bigcap_{u \in \Sph} \pi_u^{-1}\left( \D_{\alpha}(\mu_u) \right)$ for $\pi_u^{-1}$ the inverse map to $\pi_u$, and analogously for the floating body $\UaFBmu$. For details see \cite[Theorem~2]{Dyckerhoff2004}.

\textbf{Part~\ref{strict monotonicity}: $\cl{\Uamu} = \Damu$.} Using part~\ref{inclusions second} and the assumption of non-empty interior of both involved sets, \cite[Theorem~1.1.15]{Schneider2014} ensures that $\Damu = \cl{\intr{\Damu}} = \cl{\intr{\Uamu}} = \cl{\Uamu}$ as needed to verify.

\subsection{Proof of Lemma~\ref{lemma:strict monotonicity condition}}

Suppose that $\intr{\Uamu}=\emptyset$. We may apply Theorem~\ref{theorem:ray basis additional} with $F=\Uamu$ and conclude that there exists $H\in\half$ such that $\Uamu \subset \bd{H}$ and $\mu(\intr{H}) \leq \alpha$. At the same time, existence of $x \in \Uamu$ implies $\mu(H) \geq \D(x;\mu) > \alpha$ and consequently $\mu(\bd{H})>0$. Therefore, for any $x\in\Uamu$, $\mu$ is not smooth at $x$.

\subsection{Proof of Theorem~\ref{theorem:covering point in median}}
We distinguish two cases according to whether the median set $\median$ is full-dimensional, or not.

\textbf{Case (i): $\intr{\median}=\emptyset$.}
We write $a = \gamma^*(\mu)$ and start with $U_{a}(\mu)=\bigcup_{\beta>a} D_{\beta}(\mu)$. Since $\intr{D_{\beta}(\mu)}=\emptyset$ for each $\beta>a$ and sets $D_{\beta}(\mu)$ are nested, we conclude that also $\intr{U_{a}(\mu)}=\emptyset$. Choose any $x\in \aff{U_{a}(\mu)}$. From Theorem~\ref{theorem:ray basis additional} applied with $F = U_{a}(\mu)$ it follows that $\R^d$ can be covered by halfspaces $H \in \half(U_a(\mu)) \subset \half(x)$ such that $\mu(\intr{H}) \leq a = \gamma^*(\mu)$ for each such $H$. Since $\aff{\median} \subseteq \aff{U_a(\mu)}$, we obtain that also all points of $\median$ are possible to be used for such a covering, as we needed to show.

\textbf{Case (ii): $\intr{\median}\ne\emptyset$.}
In this case, $\gamma^*(\mu) = \alpha^*(\mu)$. Denote $S_0=\median$. This is a full-dimensional convex set, and for its barycentre $x_0 \in S_0$ and $\nu_0 \in \Meas$ the uniform probability distribution on $S_0$ we know that $\D(x_0;\nu_0) > \e^{-1}$, due to a result of Gr\"unbaum, see \cite[Theorem~3]{Nagy_etal2019}. Let $H_0 \in \half(x_0)$ be a generalized minimizing halfspace of $\mu$ at $x_0$ in the sense of Lemma~\ref{lemma:minimizing halfspace}, i.e. $\mu\left(\intr{H_0}\right) \leq \alpha^*(\mu)$. Writing $\vol(A)$ for the $d$-dimensional volume of a measurable set $A$, this implies $\nu_0(H_0)=\vol(S_0\cap H_0)/\vol(S_0) \geq \D(x_0;\nu_0) > \e^{-1}$. Denote $S_1=\cl{S_0 \setminus H_0}$ with $\vol(S_1)=\vol(S_0)(1-\nu_0(H_0)) < \vol(S_0)(1-\e^{-1})$. We iterate the previous procedure and for $k \geq 1$ define $x_k\in S_k$ to be the barycentre of $S_k$. In each step, we can find a generalized minimizing halfspace $H_k\in\half(x_k)$ of $\mu$ at $x_k$, put $S_{k+1}=\cl{S_k\setminus H_k}$, and again conclude
	\begin{equation*}
	\vol(S_k) < \vol(S_0)\left(1 - \e^{-1}\right)^k.
	\end{equation*}
Necessarily, $\vol(S_k)\rightarrow 0$ as $k\rightarrow \infty$. Denote $S=\bigcap_{k=0}^\infty S_k$. Then $S \subset \median$ is a convex set, $\vol(S)=0$, and consequently $\intr{S}=\emptyset$.

Note that $\median = S_0$ satisfies the assumption \eqref{basic condition} of our main Lemma~\ref{lemma:basic lemma}. Recursively, if $S_k$ satisfies \eqref{basic condition} for some $k=0,1,2,\dots$, then the same holds true for $S_{k+1}$, because
	\begin{equation*}
	S_{k+1}\compl = S_k\compl \cup \intr{H_k}.
	\end{equation*} 
By the induction step, $S_k\compl$ is covered by halfspaces $H \in \half$ with $\mu\left(\intr{H}\right) \leq \alpha^*(\mu)$ and $\intr{H} \cap S_{k+1} \subseteq \intr{H} \cap S_k = \emptyset$. Adding to that collection the halfspace $H_k$ that also satisfies $\mu\left(\intr{H_k}\right) \leq \alpha^*(\mu)$ and $\intr{H_k} \cap S_{k+1} = \emptyset$, we obtain a covering of $S_{k+1}\compl$ with the desired property, i.e. also $S_{k+1}$ satisfies \eqref{basic condition}. We finally show that also $S$ satisfies \eqref{basic condition}. Take any $x \notin S$ and the first index $k=0,1,2,\dots$ such that $x\notin S_k$. There exists $H\in\half$ with $x\in H$, $\mu(\intr{H})\leq \alpha$ and $\intr{H}\cap S_k=\emptyset$. Since $S\subseteq S_k$, it follows that $\intr{H}\cap S=\emptyset$. Lemma~\ref{lemma:basic lemma} can now be applied to the set $S$ to give the desired result, as follows by $\intr{S}=\emptyset$.

\subsection{Proof of Example~\ref{example:not convex}}	\label{section:not convex}

Each point in $\R^2$ is contained in one of the four halfspaces of $\mu$-mass $6$ determined by the dashed lines presented in Figure~\ref{figure:square median}. Therefore $\alpha^*(\mu) \leq 6$. To prove that both points $x$ and $y$ are medians, it is enough to show $\D(x;\mu)=\D(y;\mu)=6$. Because of the symmetry of $\mu$, the problem reduces to showing $\D(y;\mu)=6$. 

\begin{figure}[htpb]
\includegraphics[width=\twofig\textwidth]{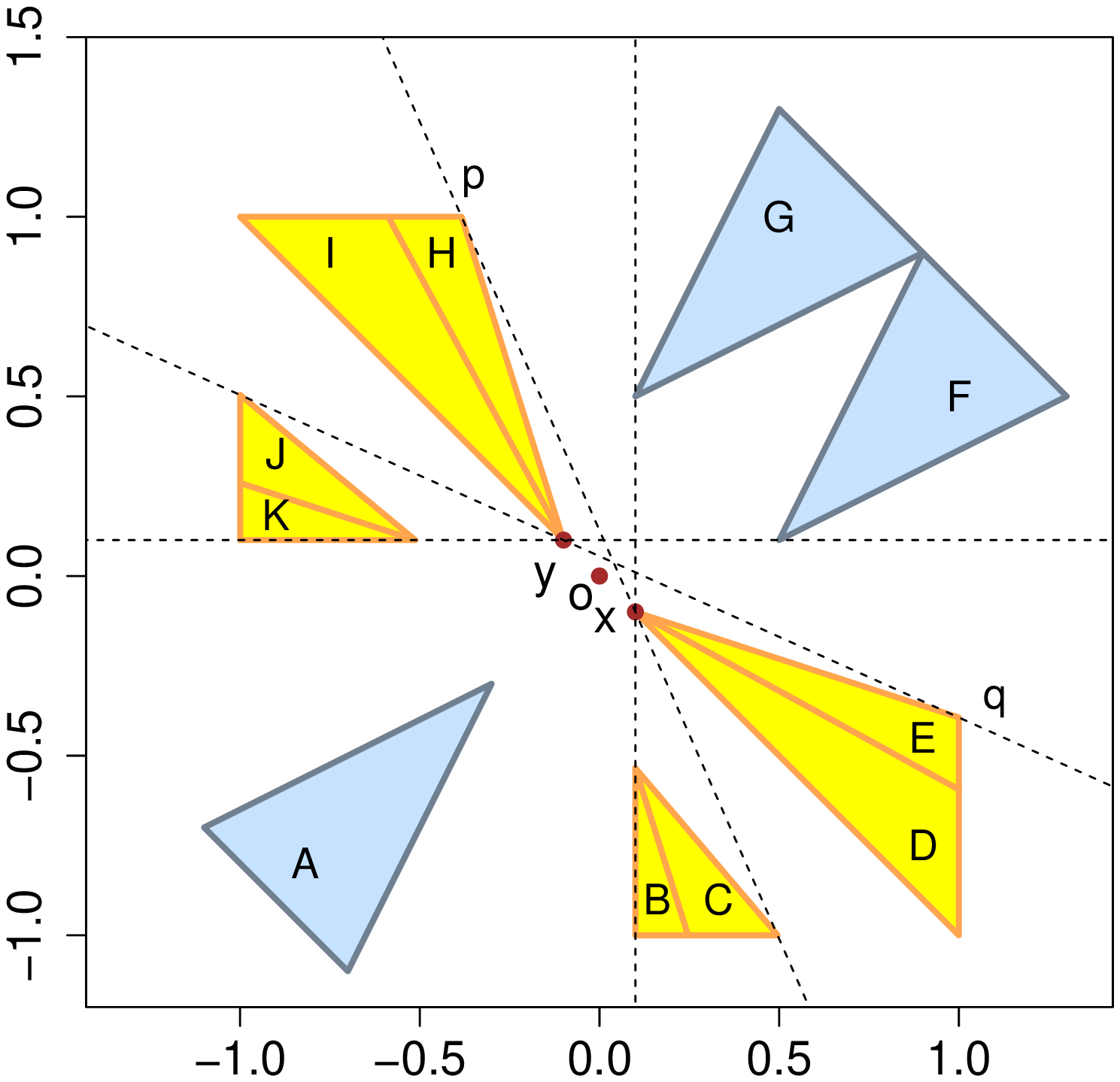} \quad
\includegraphics[width=\twofig\textwidth]{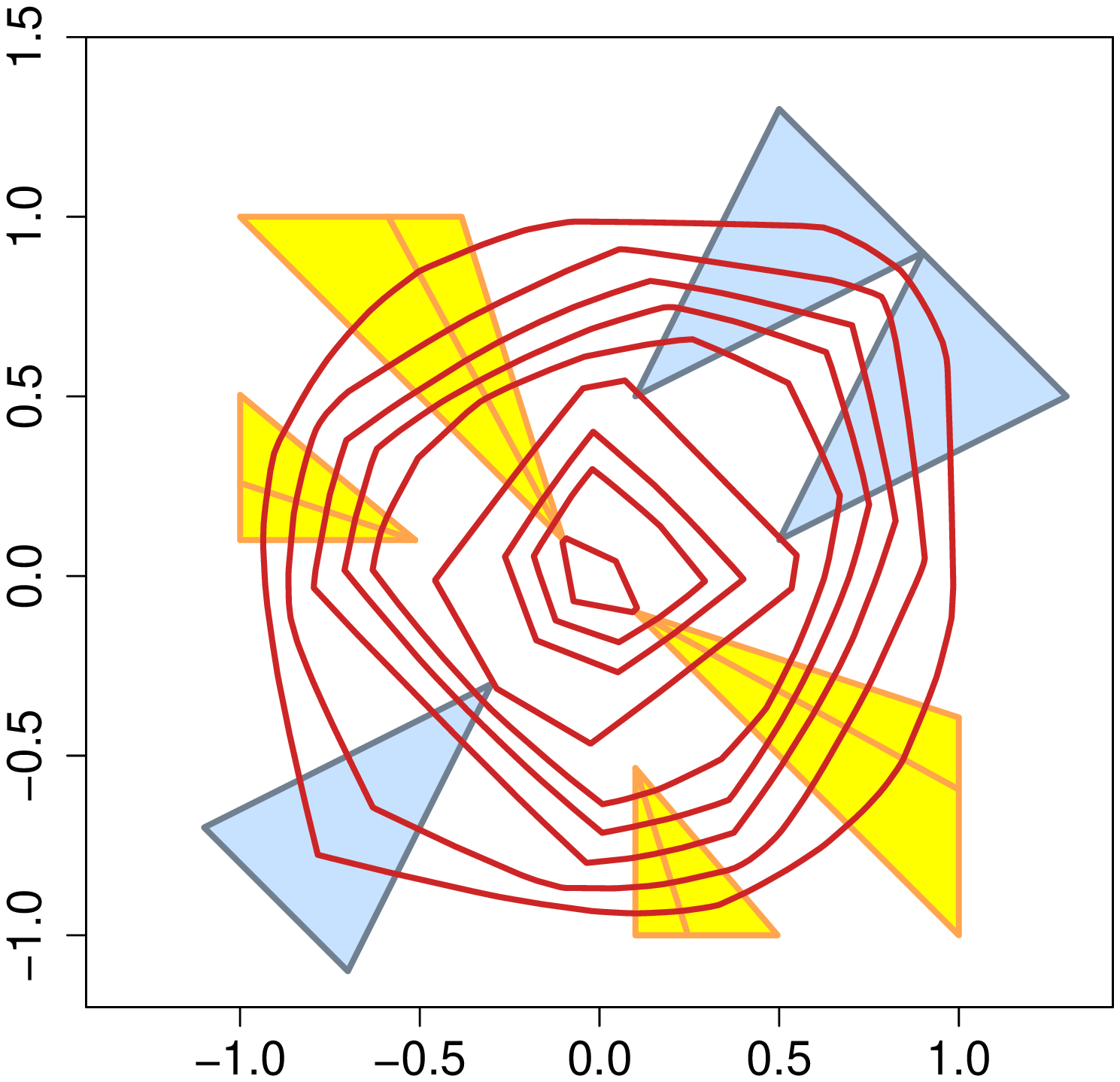}
\caption{Example~\ref{example:not convex}: Each point in $\R^2$ is contained in one of the four halfspaces of $\mu$-mass $6$ determined by the dashed lines (left hand panel). Thus, $\alpha^*(\mu) \leq 6$. On the right hand panel, several numerically computed contours $\Damu$ are displayed as sets with thick coloured boundaries. The median set $\median$ is the innermost convex set in this collection.}
\label{figure:square median}
\end{figure}

Consider an infinite line $l$ passing through $y$ and denote by $H^+$ and $H^-$ the two closed halfspaces determined by $l$. We need to show that neither $\mu(H^+)$ nor $\mu(H^-)$ decreases below $6$ as we rotate $l$ around $y$. Equivalently, as $\mu(\R^2) = 14$, it is enough to show that $6\leq\mu(H^+)\leq 8$ when rotating $l$ by an angle of $\pi$. Several specific positions of $l$ are shown in the left hand panel of Figure~\ref{figure:square min}; the corresponding halfspace $H^+$ for line $l_1$ is the one that does not contain the origin $o$. We start from $l=l_1$ when $\mu(H^+)=6$. As we rotate $l$ around $y$ counter-clockwise, $\mu(H^+)$ remains the same for $l$ being between $l_1$ and $l_2$, since the triangles $A$ and $G$ in Figure~\ref{figure:square min} are symmetric with respect to $y$. Continuing the rotation, $\mu(H^+)$ remains constant and then increases from $l=l_3$ to $l=l_4$, when $\mu(H^+)=7$. For $l$ between $l_4$ and $l_5$, $H^+$ contains the triangles $A$, $B$, $I$, $J$, $K$, so $\mu(H^+)\geq 6$ and $H^-$ contains $D$, $E$, $F$, $G$, meaning that $\mu(H^-)\geq 6$. For $l=l_5$, $\mu(H^+)$ equals $7$ and then decreases until $l=l_6$, when $\mu(H^+)=6$. Afterwards $\mu(H^+)$ increases again and reaches value $8$ for $l=l_7$. Finally, past $l=l_7$ the $\mu$-mass decreases and becomes equal to $6$ when $l$ is horizontal and then increases again and becomes $8$ for $l=-l_1$, where ``$-$" in front of $l_1$ means that the orientation of the halfspace $H^+$ is the opposite from that in the beginning of the rotation. We have shown that $x, y\in \median$ and $\alpha^*(\mu)=6$. The convexity of $\median$ implies that $o\in\median$, as well.

\begin{figure}[htpb]
\includegraphics[width=\twofig\textwidth]{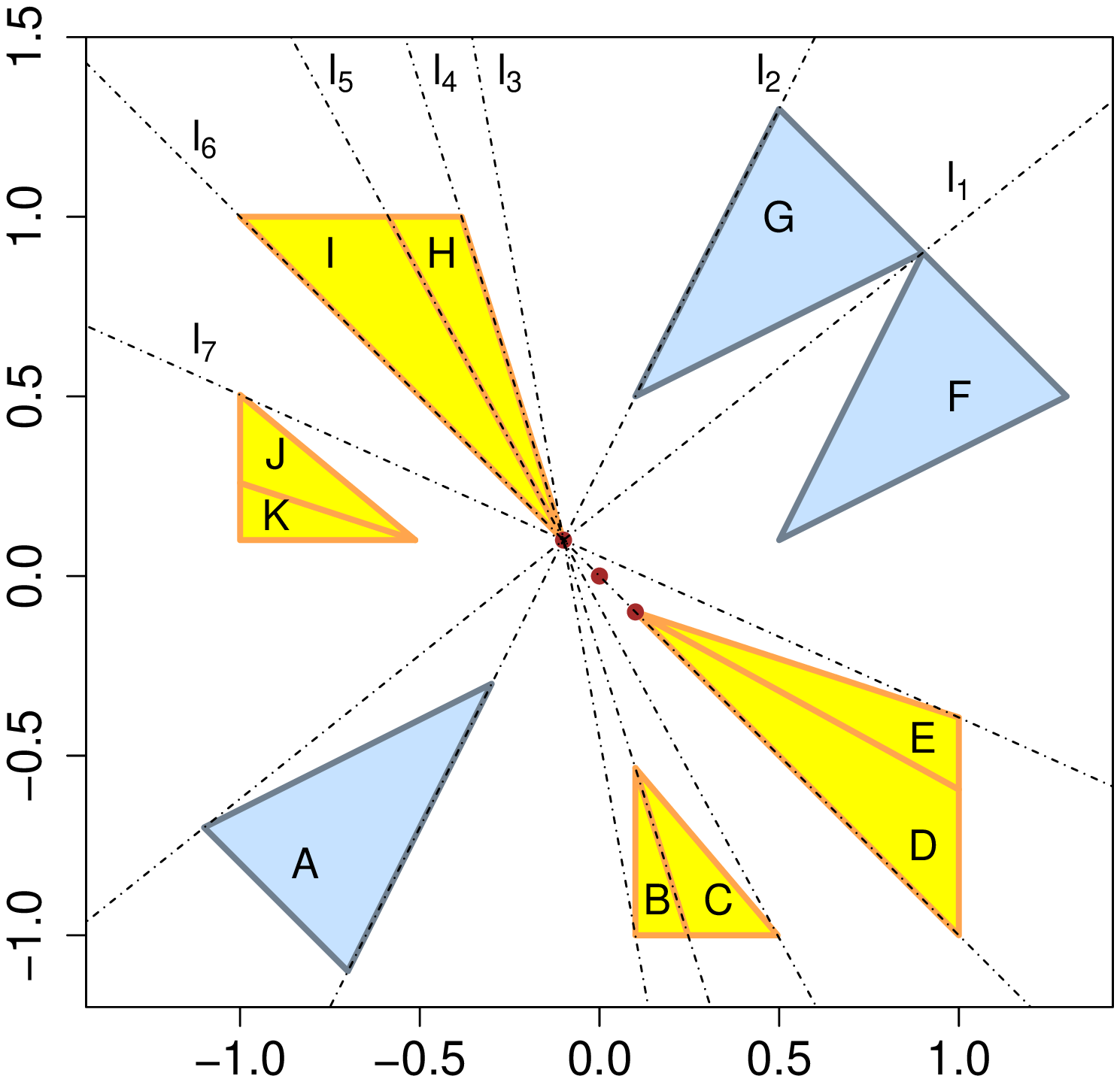}	\quad
\includegraphics[width=\twofig\textwidth]{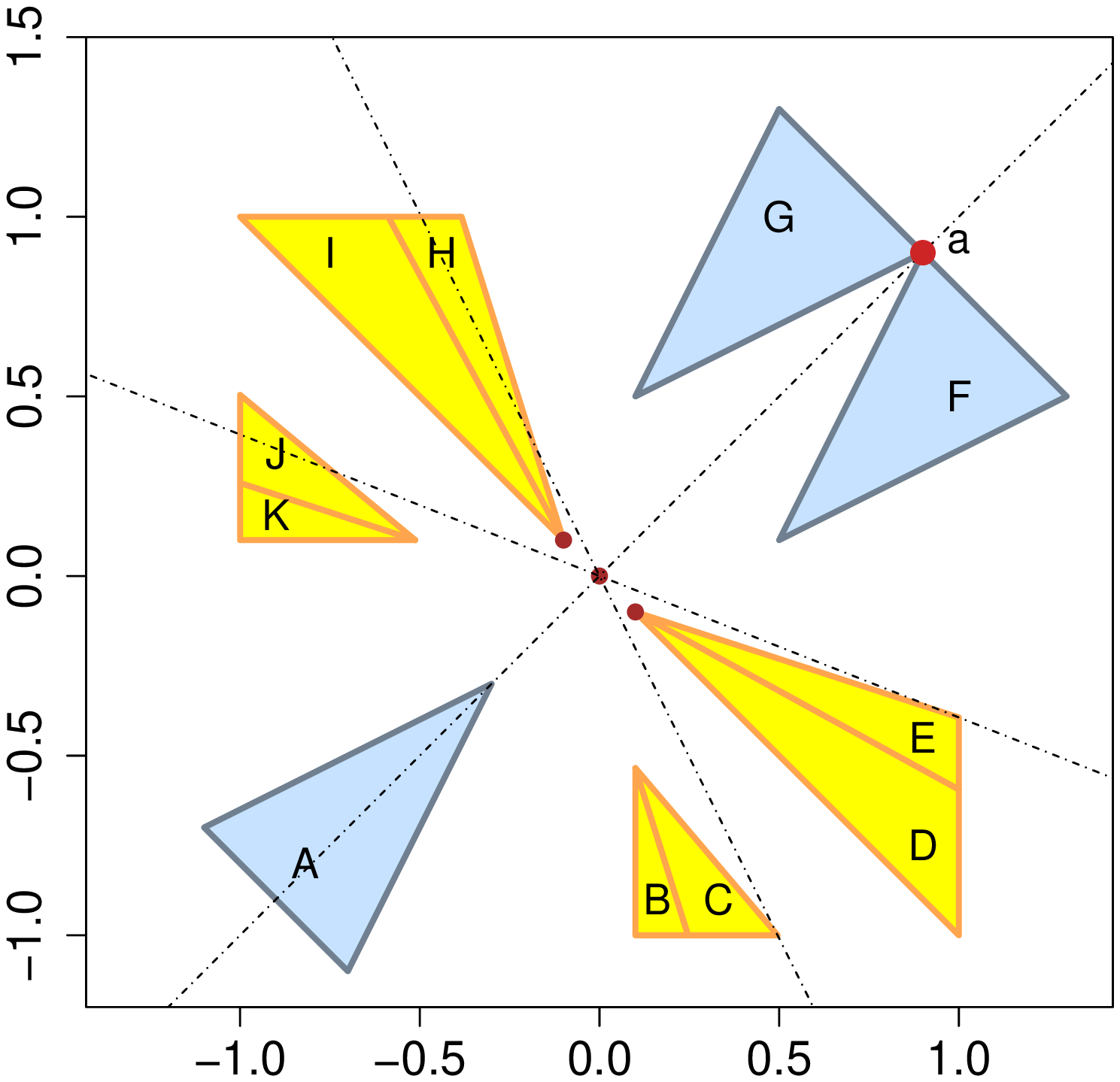}
\caption{Example~\ref{example:not convex}: In the left hand panel we display specific positions of halfspaces when rotating the line $l$ that contains the point $y$. Boundaries of these halfspaces are shown as dashed lines. In the right hand panel we see several halfspaces whose boundary passes through $o$, as described in the proof. Each halfspace from $\half(o)$ that contains the point $a$ has $\mu$-mass at least $6$.}
\label{figure:square min}
\end{figure}

Consider now the covering medians of $\mu$. Certainly $\gamma^*(\mu) = 6$. We are able to cover the whole space by halfspaces whose boundary contains $y$. To see this, consider the halfspaces determined by the horizontal and the vertical line containing $y$, respectively, and the one determined by line $q$ in Figure~\ref{figure:square median}. For $x$, the halfspaces are determined by the line $p$, and the horizontal and vertical line. Therefore, $x, y\in \coverM$.  In the right hand panel of Figure~\ref{figure:square min} we see that each halfspace whose boundary passes through the origin $o$ and contains point $a$ has $\mu$-mass greater than $6$. Therefore, it is not possible to cover $a$ by minimizing halfspaces of $o$, and consequently $o\notin \coverM$.

\subsection{Proof of Theorem~\ref{unique covering median}}
For any $y \in \R^d$ denote by $\half_{\min}(y)$ the collection of those halfspaces $H \in \half(y)$ that satisfy $\mu\left(\intr{H}\right) \leq \gamma^*(\mu)$. In particular, $\half_{\min} = \half_{\min}(x)$. Suppose there is $y\in \coverM$ such that $y\neq x$. Denote $z=(x+y)/2$. Since $\R^d = \bigcup\left\{ H \in \half_{\min}(y) \right\}$, there exists $v \in \Sph$ and $H_{y,v}\in \half_{\min}(y)$ such that $x\in H_{y,v}$. Then $H_{x,v}\subseteq H_{y,v}$ and $\mu(\intr{H_{x,v}})\leq\mu(\intr{H_{y,v}})\leq \gamma^*(\mu)$, meaning that $H_{x,v}\in \half_{\min}(x)$. If $x\in \intr{H_{y,v}}$, then $H_{x,v}\subset H_{z,v}\subset \intr{H_{y,v}}$. Our contiguity assumption then implies $\mu(H_{x,v})<\mu(H_{z,v})\leq \mu(\intr{H_{y,v}})\leq \gamma^*(\mu) \leq \alpha^*(\mu)$, which contradicts $x\in \median$. Thus, $x\in \bd{H_{y,v}}$ and $H_{y,v}\in\half_{\min} (x)$. 

Consider any $w\in \R^d$. If $w\in H_{y,v}$, choose $H_w=H_{y,v}$. Otherwise, the sequence of points $z_n=(1-1/n)z+w/n$, $n=1,2,\dots$ converges to $z$ and $z_n\notin H_{y,v}$. For each $z_n$ there is $H_n\in \half_{\min}(y)$ such that $z_n\in H_n$ and $\mu(\intr{H_n})\leq\gamma^*(\mu)$. Using the same argument as for the halfspace $H_{y,v}$ in the first part of the proof, we conclude that $x\notin \intr{H_n}$, because of $H_n\in \half_{\min} (y)$ and because of our contiguity assumption. Lemma~\ref{lemma:convergent sequence} implies the existence of a convergent subsequence $H_{n_k}\rightarrow H_w \in \half(z)$ such that $\mu(\intr{H_w})\leq \gamma^*(\mu)$. Note that also $y\notin \intr{H_n}$ because $y\in\bd{H_n}$. Using Lemma~\ref{lemma:convergent sequence}, part~\ref{converge to touching}, we conclude that $\{x,y\}\cap \intr{H_w}=\emptyset$. Because $z\in L(x,y)$ and $z\in \bd{H_w}$, it has to be $\{x,y\}\subset \bd{H_w}$, meaning that $H_w\in \half_{\min}(x)\cap \half_{\min}(y)$. Finally, we conclude that for each $w\in \R^d$, there is $H_w\in \half_{\min}(x)\cap \half_{\min}(y)$ such that $w\in H_w$. Then $\half^\prime=\{H_w\colon w\in \R^d\} \subseteq \half_{\min}(x)$ covers $\R^d$ by halfspaces whose interior has mass at most $\gamma^*(\mu)$, that at the same time all contain $y \ne x$, which violates the assumption of our theorem.

\subsection{Proof of Corollary~\ref{Ray basis corollary}}
The first part of the corollary is a direct consequence of Theorem~\ref{theorem:ray basis depth regions}. Because $\mu$ is atomic with finitely many atoms, there are only finitely many unique $\mu$-masses of halfspaces $\half$. Therefore, when applying the Fatou lemma of Lemma~\ref{lemma:convergent sequence} in the proof of Lemma~\ref{lemma:basic lemma}, one obtains a strict inequality $\mu\left(\intr{H(x,F)}\right) < \alpha$.

As for the second claim, first note that $\Damu$ is a convex polytope by \cite[Lemma~1]{Laketa_Nagy2021}. If $\Damu$ is full-dimensional, each face $F$ of $\Damu$ is a subset of a $(d-1)$-dimensional face $\widetilde{F}$ of $\Damu$; if $\dim\left(\Damu\right)<d$, $F \subseteq \widetilde{F} = \Damu$. If we prove our claim for $\widetilde{F}$, it is necessarily true also for $F$. Without loss of generality, we may therefore suppose that $F$ is of dimension $\dim(\widetilde{F}) = \min\left\{\dim(\Damu),d-1\right\}$. Let $H(x,F)$ be the halfspace from the first part of the proof. Denote by $\mu_1 \in \Meas$ the restriction of $\mu$ to $\bd{H(x,F)}$ for $H(x,F) \in \half\left(\Damu, F\right)$ from the first part of the proof, and consider any $x\in F$. If $\D(x;\mu_1)=0$, then there exists a closed $(d-1)$-dimensional halfspace $H_1$ in the hyperplane $\bd{H(x,F)}$ such that $x\in H_1$ and $\mu_1(H_1)=0$. The existence of that minimizing halfspace follows because $\mu$ contains only finitely many atoms, thus there are only finitely many possible values of $\mu_1(H)$ for $H \in \half$. Consider now a slight perturbation of the halfspace $H(x,F)$, in the sense that the unit normal $v \in \Sph$ of $H(x,F)$ is perturbed, but \begin{enumerate*}[label=(\roman*)] \item the $(d-2)$-dimensional affine space $\relbd{H_1}$ remains in the boundary of the perturbed halfspace $H^\prime \in\half(x)$, and \item $H_1 \subset H^\prime$. \end{enumerate*} Because there are only finitely many atoms of $\mu$, it is certainly possible to obtain $H^\prime$ such that $\mu(H^\prime) = \mu(\intr{H(x,F)}) + \mu(H_1) = \mu(\intr{H(x,F)}) < \alpha$, which contradicts $x\in F \subseteq \Damu$. We obtain that $F \subseteq U_{\beta}(\mu_1)$ for $\beta = 0$. Since $U_{0}(\mu_1)$ is, again by the assumption of only finitely many atoms of $\mu$, a polytope whose vertices are atoms of $\mu_1$, $F$ has to be contained in a convex hull of at least $\dim(F)+1$ atoms of $\mu$, all lying in the hyperplane $\bd{H(x,F)}$.

\subsection*{Acknowledgement}
This research was supported by the grant 19-16097Y of the Czech Science Foundation, and by project PRIMUS/17/SCI/3 of Charles University. P.~Laketa was supported by the OP RDE project ``International mobility of research, technical and administrative staff at the Charles University" CZ.02.2.69/0.0/0.0/18\_053/0016976. 


\def\cprime{$'$} \def\polhk#1{\setbox0=\hbox{#1}{\ooalign{\hidewidth
  \lower1.5ex\hbox{`}\hidewidth\crcr\unhbox0}}}

\end{document}